\documentclass[12pt,reqno]{amsart}
\usepackage[margin=1in]{geometry}
\usepackage{amsmath,amssymb,amsthm,graphicx,amsxtra, setspace}
\usepackage[utf8]{inputenc}
\usepackage{mathrsfs}
\usepackage{hyperref}
\usepackage{upgreek}
\usepackage{mathtools}
\usepackage{hyperref}
\usepackage{alltt}
\usepackage{fouridx}
\usepackage{dsfont}
\usepackage{dsfont}
\usepackage{cancel}
\usepackage[dvipsnames]{xcolor}
\usepackage[mathcal]{euscript}
\usepackage{shadethm}
\usepackage{float}
\allowdisplaybreaks

\usepackage[pagewise]{lineno}

\DeclareMathAlphabet{\mathpzc}{OT1}{pzc}{m}{it}

\usepackage[cyr]{aeguill}

\colorlet{darkblue}{blue!50!black}

\hypersetup{
	colorlinks,%
	citecolor=blue,%
	filecolor=red,%
	linkcolor=red,%
	urlcolor=blue,%
	pdfnewwindow=true,%
	pdfstartview={FitH}
}

\newtheorem{theorem}{Theorem}[section]
\newtheorem{lemma}[theorem]{Lemma}

\newtheorem{definition}[theorem]{Definition}

\newtheorem{example}[theorem]{Example}
\newtheorem{remark}[theorem]{Remark}

\allowdisplaybreaks

\let\originalleft\left
\let\originalright\right
\renewcommand{\left}{\mathopen{}\mathclose\bgroup\originalleft}
\renewcommand{\right}{\aftergroup\egroup\originalright}

\let\emptyset\varnothing

\renewcommand{\d}{\/\mathrm{d}\/}

\def\I{\mathrm{I}}

\def\B{\mathcal{B}}
\def\D{\mathrm{D}}

\def\W{\mathrm{W}}

\def\H{\mathbb{H}}

\newcommand{\dive}{\operatorname{div}}
\newcommand{\supp}{\operatorname{supp}}

\newcommand{\R}{\mathbb{R}}

\renewcommand{\d}{\/\mathrm{d}\/}

\newcommand{\Addresses}{{
		\footnote{
			
			\noindent \textsuperscript{1,2,3}Department of Mathematics, Indian Institute of Technology Roorkee-IIT Roorkee,
			Haridwar Highway, Roorkee, Uttarakhand 247667, INDIA.\par\nopagebreak
			\noindent  \textit{e-mail:}    \texttt{Akram Khan: akram\_k@ma.iitr.ac.in.}
			
				\textit{e-mail:} \texttt{Sagar Gautam: sagar\_g@ma.iitr.ac.in.}
			
			\textit{e-mail:}  	\texttt{Manil T. Mohan: maniltmohan@ma.iitr.ac.in, maniltmohan@gmail.com.}

			\noindent \textsuperscript{*}Corresponding author.
			
			\textit{Key words:} Convective Brinkman-Forchheimer  equations, Lebesgue-Sobolev approximation, energy equality, Lions-Magenes lemma.

			Mathematics Subject Classification (2020): 65N30, 35Q35, 47J20, 65J15, 65N15.
}}}

\begin{document}
	
	\title[Simultaneous Sobolev-Lebesgue Approximation on Certain Unbounded Domains]{Divergence-Free Approximation in Sobolev and Lebesgue Spaces on general unbounded domains with Applications to Energy Equality in Fluid Dynamics
		\Addresses}
	\author[A. Khan, S. Gautam  and M. T. Mohan]
	{Akram Khan\textsuperscript{1}, Sagar Gautam\textsuperscript{2},   and Manil T. Mohan\textsuperscript{3*}}
	
	\maketitle

	\begin{abstract}
	We construct divergence-free, vector-valued approximation functions on the whole space $\mathbb{R}^d$, $d\geq 2$, as well as on general unbounded domains of uniform $\mathrm{C}^{1,1}$-type. These approximations converge simultaneously in both Sobolev and Lebesgue spaces. In the whole-space setting, we employ Bogovski\u{\i}\ operators to construct such approximations, thereby extending the approximation theory developed for smooth bounded domains and the simultaneous approximation framework introduced  by \emph{Fefferman, Hajduk, and Robinson,  {Proc. Lond. Math. Soc.} (3) \ {125} (2022), no.~4, 759-777}. As an application, we establish energy equality for Leray-Hopf weak solutions of the incompressible convective Brinkman-Forchheimer (CBF) equations on $\mathbb{R}^d$, $d\in\{2,3\}$ covering both the critical and supercritical regimes. For general unbounded domains, we employ the resolvent operator associated with the Stokes operator, developed by \emph{Farwig, Kozono and Sohr,  {Acta Math.}, {195} (2005), 21-53}, to obtain simultaneous approximation results. We also establish a generalized version of the classical Lions-Magenes lemma, which is of independent interest. Finally, by combining this result with the simultaneous approximation framework for unbounded domains, we establish energy equality for weak solutions of the CBF equations on general unbounded domains.
	\end{abstract}

	\section{Introduction} \setcounter{equation}{0}\label{sec-1}
	
	\subsection{The Navier-Stokes equations}
	Let $\Omega\subseteq\mathbb{R}^{d}$, $d\in\{2,3\}$, be a smooth bounded or unbounded domain. Let $0< T\leq \infty$. We consider the incompressible Navier-Stokes equations (NSE)
	\begin{equation}\label{NSE}
	\left\{
	\begin{aligned}
		\frac{\partial\boldsymbol{y}}{\partial t}-\nu \Delta\boldsymbol{y}+(\boldsymbol{y}\cdot\nabla)\boldsymbol{y}+\nabla\pi &=\boldsymbol{f}, \ \text{ in } \ (0,T)\times\Omega, \\ \nabla\cdot\boldsymbol{y}&=0, \ \text{ in } \ [0,T)\times\Omega, \\
		\boldsymbol{y}&=\mathbf{0}\ \text{ on } \ (0,T)\times\partial\Omega, \\
		\boldsymbol{y}(0)&=\boldsymbol{y}_0 \ \text{ in } \ \Omega,
	\end{aligned}
	\right.
\end{equation}
	where  $\boldsymbol{y}=\boldsymbol{y}(t,x)$ denotes the fluid velocity field, $\pi=\pi(t,x)$ is the associated pressure, and $\nu>0$ is the kinematic viscosity.
	Here the no-slip boundary condition
	$\boldsymbol{y}=0\ \text{ on }\ \partial\Omega$
	is imposed whenever $\partial\Omega\neq\emptyset$. If $\Omega$ is unbounded (e.g., $\Omega=\mathbb{R}^{d}$ or an exterior domain), we additionally require
	\begin{align*}
	\boldsymbol{y}(t,x)\to0\ \text{ as }|x|\to\infty,
	\end{align*}
	for each $t\in(0,T)$, which is the usual far-field condition. For a comprehensive treatment of the theory of the NSE, we refer the interested reader to \cite{Foias-2001,Galdi-2000,Ladyzhenskaya-1969,Rieusset-2024,Robinson-2016,SohrNSE,Temam-1984}.
	
	A cornerstone of the mathematical theory of the  three-dimensional incompressible NSE is the celebrated result of Leray and Hopf, who independently established the global-in-time existence of finite-energy weak solutions, that is, weak solutions satisfying
	\[
	\sup_{0\le t\le T}\|\boldsymbol{y}(t)\|_{\mathbb{H}}^2
	+\int_0^T\|\nabla\boldsymbol{y}(t)\|_{\mathbb{H}}^2 \d t<\infty. 
	\]   
	The question of whether every Leray-Hopf weak solution of the 3D NSE  satisfies the energy equality is one of the fundamental open problems in the mathematical theory of fluid mechanics (\cite{Robinson-2020}). 
	However, whether such Leray-Hopf weak solutions are unique remains one of the most challenging open problems in the analysis of nonlinear partial differential equations. Despite significant progress over the past decades, the problem remains open for both bounded and unbounded domains. It is well known that a sufficient condition for the validity of the energy equality is the additional regularity
	\[
	\boldsymbol{y}\in\mathrm{L}^{4}(0,T;\mathbb{L}^{4}(\Omega)).
	\]
	In two space dimensions, this condition is automatically satisfied by every Leray-Hopf weak solution as a consequence of the classical \emph{Ladyzhenskaya inequality} (\cite[Lemma 1, Chpater I]{Ladyzhenskaya-1969}), and therefore every weak solution satisfies the energy equality. 
	This criterion was established independently by Prodi (\cite{Prodi-1959}) and Lions (\cite{Lions-1959}) and is discussed in detail in the monograph of Galdi \cite{Galdi-2000} (see Serrin \cite{Serrin-1963} also).  
	
	A fundamental contribution to the theory of energy equality was made by Shinbrot~(\cite{Shinbrot-1974}), who proved that every weak Leray-Hopf solution of the NSE satisfying
	\begin{align*}
	\boldsymbol{y}\in\mathrm{L}^{p}(0,T;\mathbb{L}^{s}(\Omega)), \ 
	\frac{2}{p}+\frac{2}{s}=1,\   \ s\geq4,
	\end{align*}
	obeys the energy equality, which extends the earlier results of Prodi~\cite{Prodi-1959} and Lions~\cite{Lions-1959} (see \cite[Theorem 1.1]{Veiga-2020} for a simple proof).  It is worth noting that, in his proof of the energy equality, Shinbrot \cite{Shinbrot-1974} implicitly considers a sequence of smooth approximating functions converging simultaneously in both Sobolev and Lebesgue spaces. More precisely, he assumes the existence of a sequence
	$
	\{\boldsymbol{y}_n\}_{n\geq1}\subset \mathrm{C}_0^\infty([0,\infty);\mathbb{C}_{0,\sigma}^{\infty}(\Omega))
	$
	converging to the weak solution in $ \boldsymbol{y}\in\mathrm{L}^{\infty}(0,T;\mathbb{H})
	\cap\mathrm{L}^2(0,T;\mathbb{V})
	\cap\mathrm{L}^p(0,T;\mathbb{L}^s(\Omega))$ (see Subsection \ref{fun-set} for the function spaces).
	As indicated in the proof, the existence of such a sequence is justified by a density argument. However, no explicit construction of these approximating functions is provided. In particular, on unbounded domains, the existence of a sequence converging simultaneously in both Sobolev and Lebesgue spaces is far from immediate and requires additional analysis. The approximation results established in the present work provide precisely such a construction on the whole space and general unbounded domains.

	 Galdi \cite{Galdi-2019} subsequently showed that the Leray-Hopf regularity need not be assumed a priori; under the additional $\mathrm{L}^4(0,T;\mathbb{L}^4(\Omega))$ regularity, a very weak solution can be shown to belong to the Leray-Hopf class. 
		Further developments in the study of energy equality have established	more refined sufficient conditions in terms of the space-time	integrability of the velocity and its gradient. Berselli and Chiodaroli \cite{Berselli-2020}	obtained several energy-equality criteria involving 	$\nabla \boldsymbol{y}\in \mathrm{L}^p(0,T;\mathbb{L}^q(\Omega))$, including the condition
$$	\nabla \boldsymbol{y}\in	\mathrm{L}^{\frac{5q}{5q-6}}(0,T;\mathbb{L}^q(\Omega)),	\ 	\frac{9}{5}\le q<3.$$	
Subsequently, Beir\~ao da Veiga and Yang \cite{Veiga-2019} extended this criterion, in the Newtonian case, to the full range $q>9/5$. Their results were obtained	within a more general framework that includes non-Newtonian fluids, with the Newtonian case corresponding to the choice $r=2$. They also	introduced a unified viewpoint based on the Shinbrot number, which 	allows different integrability assumptions on $\boldsymbol{y}$ and $\nabla \boldsymbol{y}$ to be compared on a common scale. In particular, for a suitable combination	of velocity and gradient integrability, the corresponding Shinbrot	number is independent of the particular spatial exponent $q$. These	developments provide a unified perspective on the various	space-time regularity conditions that guarantee the energy equality	for Leray-Hopf weak solutions. For further details on energy equality for the 3D NSE, we refer the interested reader to \cite{Veiga-2025,Eguchi-2025,Taniuchi-1997} and the references therein.

	In the approach of~\cite[Corollary 6.3]{Veiga-2025}, it is assumed that the common test space	$\mathbb{C}_{0,\sigma}^{\infty}(\Omega)$ is dense in $\mathbb{X}\cap \mathbb{Y}$, where $\mathbb{X}$ and $\mathbb{Y}$ are Banach spaces. This density assumption, together with a	standard time-mollification argument, yields the existence of a single sequence	in $\mathrm{C}^\infty(0,t_0;\mathbb{C}_{0,\sigma}^{\infty}(\Omega))$ that approximates a given function	simultaneously in the relevant Bochner spaces	$\mathrm{L}^q(0,t_0;\mathbb{X})$ and $\mathrm{L}^p(0,t_0;\mathbb{Y})$. In the present work, rather than merely	invoking such an approximation result, we explicitly construct the required	simultaneous approximating sequence.

	Concerning the uniqueness of Leray-Hopf weak solutions to the 3D NSE, the celebrated Ladyzhenskaya-Prodi-Serrin (LPS) (\cite{Ladyzhenskaya-1967,Prodi-1959,Serrin-1962}) criterion asserts that uniqueness holds whenever
	\begin{align*}
	\boldsymbol{y}\in\mathrm{L}^{p}(0,T;\mathbb{L}^{s}(\Omega)), \ 
	\frac{2}{p}+\frac{3}{s}\le1,\  s\geq 3 \ \text{ and } p\geq2,
	\end{align*}
	and as under this condition $\boldsymbol{y}$ is smooth. It is worth mentioning that the endpoint case $\boldsymbol{y}\in\mathrm{L}^{\infty}(0,T;\mathbb{L}^{3}(\Omega)),$ which remained open for many years, was finally resolved by Escauriaza, Seregin, and \v{S}ver\'ak \cite{Iskauriaza-2003}, who proved that every Leray-Hopf weak solution satisfying this condition is smooth. 
	The authors in \cite{Farwig-2010} established a sufficient condition for energy equality of Leray-Hopf weak solutions to the 3D NSE in general unbounded  domains.

	\subsection{The convective Brinkman-Forchheimer  equations} The convective Brinkman-Forchheimer (CBF) system provides a mathematical
	framework for describing incompressible fluid motion through porous
	media. In a domain $\Omega\subseteq\mathbb{R}^d$, it takes the form (\cite{Antontsev-2010,Cai-2008,Gautam-2025,Markowich-2016})
	\begin{equation}\label{CBF}
		\left\{
		\begin{aligned}
			\frac{\partial \boldsymbol{y}}{\partial t}-\mu \Delta\boldsymbol{y}+(\boldsymbol{y}\cdot\nabla)\boldsymbol{y}+\alpha\boldsymbol{y}+\beta|\boldsymbol{y}|^{r-1}\boldsymbol{y}+\nabla\pi&=\boldsymbol{f}, \ \text{ in } \ (0,T)\times\Omega, \\ \nabla\cdot\boldsymbol{y}&=0, \ \text{ in } \ [0,T)\times\Omega, \\
			\boldsymbol{y}&=\mathbf{0}\ \text{ on } \ (0,T)\times\partial\Omega, \\
			\boldsymbol{y}(0)&=\boldsymbol{y}_0 \ \text{ in } \ \Omega.
		\end{aligned}
		\right.
	\end{equation}
	The coefficient $\mu>0$ represents the viscous diffusion and is
	referred to as the Brinkman coefficient or effective viscosity. The
	parameters $\alpha>0$ and $\beta>0$ correspond, respectively, to the
	Darcy and Forchheimer effects, describing the linear and nonlinear
	resistance generated by the porous medium. The nonlinear damping term
	is characterized by the exponent $r\in[1,\infty)$, which plays a
	central role in determining the analytical nature of the system.
		In particular, $r=3$ is identified as the critical exponent. When $\alpha=0$ and $r=3$, the CBF system exhibits the same scaling as the incompressible NSE (see \cite[Proposition 1.1]{Hajduk-2017}). Accordingly, the ranges $1\leq r<3$ and $r>3$ are commonly termed the subcritical and supercritical regimes, respectively. For the analysis of CBF equations with faster-growing Forchheimer nonlinearities, we refer to \cite{Kalantarov-2012}. Finally, in the absence of both damping terms, that is, $\alpha=\beta=0$, system \eqref{CBF} reduces to the classical incompressible NSE given in \eqref{NSE}.
	
	The existence of Leray-Hopf weak solutions to \eqref{CBF} in bounded domains for $r\geq1$, together with the energy equality for such solutions when $r\geq1$ in two dimensions and $r\geq3$ in three dimensions, has been established in \cite{Antontsev-2010,Fefferman-2022,Gautam-2025,Hajduk-2017,Kalantarov-2012,Li-2024}. The corresponding results in unbounded domains can be found in \cite{Cai-2008,Kinra-2024,Li-2024,Mohan-2021}. Uniqueness, on the other hand, holds for $r\geq1$ in two dimensions and for $r>3$ in three dimensions. In the critical three-dimensional case $d=r=3$, uniqueness continues to hold provided the additional condition $4\beta\mu\geq1$ is satisfied (see \cite{Kim-2021,Zhang-2011,Zhou-2012}). We further refer the interested reader to \cite{Kim-2017,Mohan-2025} and the references therein for results on time-periodic solutions of the 2D and 3D CBF equations in bounded domains.
	
	Regarding energy equality for Leray-Hopf weak solutions, the 2D  CBF equations on bounded domains exhibit a behavior analogous to that of the 2D NSE. In particular, the additional damping does not affect the validity of the energy equality for $r\in[1,3]$, which follows from the same regularity mechanism as in the 2D Navier-Stokes setting, since  $\mathrm{L}^{\infty}(0,T;\mathbb{H})
	\cap\mathrm{L}^2(0,T;\mathbb{V})
	\cap\mathrm{L}^{r+1}(0,T;\mathbb{L}^{r+1}_{\sigma})\hookrightarrow \mathrm{L}^4(0,T;\mathbb{L}^4_{\sigma})$, for $r\in[1,3]$. In three dimensions, however, the situation is substantially different. While the subcritical regime $1\leq r<3$ remains as challenging as for the classical 3D NSE, the critical case $r=3$ and the supercritical case $r>3$ benefit from the stronger dissipative effect of the damping, leading to improved regularity of weak solutions. Exploiting this enhanced regularity, it was established in \cite{Fefferman-2022,Gautam-2025,Hajduk-2017} that every weak solution satisfies the energy equality on bounded or periodic domains. The arguments in \cite{Fefferman-2022,Gautam-2025,Hajduk-2017} rely essentially on the spectral decomposition of the Stokes operator and the construction of divergence-free approximations using its eigenspaces. Such an approach is inherently tied to bounded or periodic domains, where the Stokes operator has a discrete spectrum. Consequently, the corresponding energy equality problem for weak solutions on general unbounded domains remains open.

	For the 3D CBF equations with supercritical damping exponent \(r\ge4\), this criterion is automatically satisfied since every weak solution belongs to $	\mathrm{L}^{r+1}(0,T;\mathbb{L}^{r+1}(\Omega)).$ Indeed, $	\frac{2}{r+1}+\frac{3}{r+1}=\frac{5}{r+1}\le1 $	for all \(r\ge4\). More remarkably, uniqueness continues to hold in the range \(3<r<4\), despite the fact that the corresponding regularity lies outside the LPS regime. Furthermore, in the critical case \(r=3\), uniqueness remains valid under the condition \(4\beta\mu\ge1\) (\cite{Gautam-2025}). These results demonstrate that the nonlinear Brinkman-Forchheimer damping yields a uniqueness theory that extends beyond the classical LPS framework. Additional uniqueness results for the NSE with damping are available in \cite{Zhang-2011,Zhou-2012}.

	\subsection{Novelties, difficulties and approaches}
	The primary objective of the present paper is to develop a simultaneous divergence-free approximation theory on the whole space and general unbounded domains. To this end, we construct divergence-free vector-valued approximation functions on  $\mathbb{R}^d, \ d\geq 2,$ and general unbounded domains of uniform $\mathrm{C}^{1,1}$-type. On $\mathbb{R}^d$, our construction combines Bogovski\u{\i} operators on bounded domains (\cite{Bogovskii-1979,Ciarlet-2025,Galdi-2011}) with smooth cutoff functions and standard mollification techniques. The resulting structure-preserving approximations converge simultaneously in Sobolev and Lebesgue spaces, thereby extending the corresponding approximation theory from bounded domains to $\mathbb{R}^d$ (Theorem \ref{main-1}). As a first application of our approximation theory, we prove that every weak solution of the incompressible CBF equations \eqref{CBF} in the critical and supercritical regimes satisfies the energy equality on $\mathbb{R}^d$ (Theorem \ref{main}). 
	

	On general unbounded domains $\Omega\subset\mathbb{R}^d$, $d\geq 2$, $\partial\Omega\neq\varnothing,$ of uniform $\mathrm{C}^{1,1}$-type, we use the resolvent operator corresponding to the Stokes operator constructed in \cite{Farwig-2005,Farwig-2009,Giga-1991,Kunstmann-2008} for simultaneous approximation (Theorem \ref{main-2}). The main ingredient in the proof is the fact that, by
	\cite[Theorem 1.4]{Farwig-2009} (see also
	\cite[Theorem 2.2]{Kunstmann-2008}), the operator
	\(\varepsilon \mathrm{I}+\widetilde{\mathcal{A}}_p\) is sectorial of type \(0\) for every
	\(\varepsilon>0\), where \(\widetilde{\mathcal{A}}_p\) denotes the Stokes operator
	on \(\widetilde{\mathbb{L}}^p_\sigma(\Omega)\).

	Another essential tool in the proof of energy equality is the Lions-Magenes lemma (\cite[Theorem 3.1, Chapter 1]{Lions-1972}, \cite[Lemma 1.2, Chapter 3]{Temam-1984}). Together with Strauss' lemma (\cite[Lemma 8.1, Chapter 3]{Lions-1972}, \cite[Theorem 2.1]{Strauss-1966}, \cite[Lemma 1.4]{Temam-1984}), it yields a classical approach for proving energy equalities for evolution equations posed within a Gelfand triple. This framework is commonly used to justify the time regularity of the energy functional and the corresponding energy identity; see, for instance, \cite[Eq. (1.5), Chapter 3]{Temam-1984}. A generalization of the Lions-Magenes lemma to the setting of two Banach spaces is stated in \cite{Chepyzhov-2002} (see \cite[Exercise 8.2]{Robinson-2001} for a special case) without proof. However, a crucial ingredient in this extension is the density of the intersection \(\mathcal{V}\cap \mathcal{E},\) where $\mathcal{V}$ and $\mathcal{E}$ are Banach spaces, in the pivot Hilbert space \(\mathcal{H}\), where
	\[
	\mathcal{V}\cap \mathcal{E} \hookrightarrow \mathcal{H} \equiv \mathcal{H}^{\prime} \hookrightarrow \mathcal{V}^{\prime}+\mathcal{E}^{\prime}
	\]
	forms a Gelfand triple. This density property is essential for proving the absolute continuity of the energy functional
	\[
	[0,T]\ni t\mapsto \|\boldsymbol{y}(t)\|_{\mathcal{H}}^{2}\in[0,\infty),
	\]
	which is the cornerstone of the Lions-Magenes argument. Motivated by this observation, we provide a complete and self-contained proof of the generalized Lions-Magenes lemma, filling in the missing details (Theorem \ref{GLML}). We believe that this result is of independent interest and may find applications in the analysis of a broader class of nonlinear evolution equations.

	Following the notation and framework of Simon~\cite[Proposition~2]{Simon-2010},
	we set
$
	E=\D(\mathcal{L}),
$ where $\mathcal{L}=\widetilde{\mathcal{A}}_p$, and $
	V=\mathbb{V}\cap\mathbb{L}^{p}_{\sigma}.
$
	Since $E\subset V$, it follows immediately that
$
	V\cap E=E.
$ Moreover, our simultaneous approximation theorem establishes that \(\D(\mathcal{L})\) is dense in \(\mathbb{V}\cap\mathbb{L}^{p}_{\sigma}\). Consequently, the density requirement of \cite[Proposition~2]{Simon-2010} is fulfilled, yielding the canonical embedding
	$
	(\mathbb{V}\cap\mathbb{L}^{p}_{\sigma})'
	\hookrightarrow
	(\D(\mathcal{L}))'.
	$
	This provides the natural distributional framework for interpreting
	\begin{align*}
	\frac{\partial\boldsymbol{y}}{\partial t}\in\mathbb{V}'+\mathbb{L}^{p'}_{\sigma} 
	\cong(\mathbb{V}\cap\mathbb{L}^{p}_{\sigma})',
	\end{align*}
	for $2<p<\infty$ in $\mathcal{D}'(0,T)$, which is precisely the setting required for our generalized Lions-Magenes lemma. Consequently, \(\D(\mathcal{L})\) serves as a common dense Hilbert subspace of both \(\mathbb{V}\) and \(\mathbb{L}^{p}_{\sigma}\), thereby providing the appropriate functional framework for applying the generalized Lions-Magenes lemma. Combining this result with the density argument described above, we establish the energy equality for weak solutions of the critical and supercritical incompressible CBF equations in both two and three dimensions on general  unbounded domains (Theorem \ref{main-3}).
	

	\subsection{Organization of the paper}
The remainder of the paper is organized as follows. In Section~\ref{sec-2}, we formulate the functional framework underlying our analysis. We introduce the Stokes operator on unbounded domains, the relevant bilinear and nonlinear operators, and the Bogovskiĭ operator, and recall the properties that will be needed in the sequel. 

Section~\ref{sec-3} is devoted to the whole-space setting. We first establish a simultaneous approximation theorem for divergence-free vector fields in Sobolev and Lebesgue spaces on $\mathbb{R}^d$, $d\geq 2$ (Theorem~\ref{main-1}). As an application, we use this approximation result to establish the energy equality for Leray-Hopf weak solutions of the critical and supercritical CBF equations in dimensions two and three (Theorem~\ref{main}).

 In Section~\ref{sec-4}, we extend the approximation result to general unbounded domains of uniform $\mathrm{C}^{1,1}$-type. The construction is based on the resolvent of the Stokes operator and yields simultaneous approximation in the relevant Sobolev and Lebesgue spaces (Theorem~\ref{main-2}). We then establish, independently, a generalized Lions-Magenes lemma (Theorem~\ref{GLML}). Combining this result with the simultaneous approximation theorem, we obtain the energy equality for Leray-Hopf weak solutions of the CBF equations \eqref{CBF} on general unbounded domains (Theorem \ref{main-3}). Finally, in Appendix \ref{Ap-chain-rule}, we provide a generalized version of the Lions-Magenes chain rule (Theorem \ref{chain-rule}).

	\section{Preliminaries}\setcounter{equation}{0} \label{sec-2}
	The aim of this section is to introduce the functional framework and recall several auxiliary results, including the Poincar\'e-Steklov inequality and Bogovski\u{\i}'s theorem, that will be used throughout the paper. 

	\subsection{Functional framework}\label{fun-set}
	We begin by recalling several standard properties of sum and
	intersection spaces that are well known from interpolation theory;
	see, e.g., \cite{Triebel-1978} (see also \cite{Farwig-2005,Farwig-2009}, etc.).
	We first recall some standard facts concerning the sum and
	intersection of Banach spaces. 
	
	Let $\mathbb{X}_1$ and $\mathbb{X}_2$ be complex Banach
	spaces continuously embedded in a common topological vector space $\mathbb{Y}$.
	Assume, in addition, that the intersection
	$\mathbb{X}_1\cap \mathbb{X}_2$ is dense in each of $\mathbb{X}_1$ and $\mathbb{X}_2$. Equipped with the
	norm
	\[
	\|\boldsymbol{v}\|_{\mathbb{X}_1\cap \mathbb{X}_2}
	:=
	\max\left\{
	\|\boldsymbol{v}\|_{\mathbb{X}_1},\|\boldsymbol{v}\|_{\mathbb{X}_2}
	\right\},
	\]
	the space $\mathbb{X}_1\cap \mathbb{X}_2$ is a Banach space. The algebraic sum of $\mathbb{X}_1$ and $\mathbb{X}_2$ is defined by
	\[
	\mathbb{X}_1+\mathbb{X}_2
	:=
	\left\{
	\boldsymbol{v}\in \mathbb{Y}:\ \boldsymbol{v}=\boldsymbol{v}_1+\boldsymbol{v}_2,\ 
	\boldsymbol{v}_1\in \mathbb{X}_1,\ \boldsymbol{v}_2\in \mathbb{X}_2
	\right\}.
	\]
	It becomes a Banach space when endowed with the norm
	\[
	\|\boldsymbol{v}\|_{\mathbb{X}_1+\mathbb{X}_2}
	:=
	\inf_{\substack{\boldsymbol{v}=\boldsymbol{v}_1+\boldsymbol{v}_2\\
			\boldsymbol{v}_1\in \mathbb{X}_1,\;\boldsymbol{v}_2\in \mathbb{X}_2}}
	\left(
	\|\boldsymbol{v}_1\|_{\mathbb{X}_1}+\|\boldsymbol{v}_2\|_{\mathbb{X}_2}
	\right).
	\]
	Moreover, if both $\mathbb{X}_1$ and $\mathbb{X}_2$ are reflexive, the above infimum is
	attained. More precisely, for every $\boldsymbol{v}\in \mathbb{X}_1+\mathbb{X}_2$, there exist
	$\boldsymbol{v}_1\in \mathbb{X}_1$ and $\boldsymbol{v}_2\in \mathbb{X}_2$ such that
	\begin{align*}
	\boldsymbol{v}=\boldsymbol{v}_1+\boldsymbol{v}_2
	\ \text{ and }\ 
	\|\boldsymbol{v}\|_{\mathbb{X}_1+\mathbb{X}_2}
	=
	\|\boldsymbol{v}_1\|_{\mathbb{X}_1}+\|\boldsymbol{v}_2\|_{\mathbb{X}_2}.
	\end{align*}
	We shall also use the standard duality relations
	\begin{align*}
	(\mathbb{X}_1\cap \mathbb{X}_2)'
	\cong
	\mathbb{X}_1'+\mathbb{X}_2'
	\ \text{ and }\ 
	(\mathbb{X}_1+\mathbb{X}_2)'
	\cong
	\mathbb{X}_1'\cap \mathbb{X}_2'.
	\end{align*}
	The first identity is understood with the canonical pairing
	\[
	\langle \boldsymbol{v},\boldsymbol{f}_1+\boldsymbol{f}_2\rangle
	=
	\langle \boldsymbol{v},\boldsymbol{f}_1\rangle+\langle \boldsymbol{v},\boldsymbol{f}_2\rangle,
	\]
	where $\boldsymbol{v}\in \mathbb{X}_1\cap \mathbb{X}_2$ and
	$\boldsymbol{f}=\boldsymbol{f}_1+\boldsymbol{f}_2\in \mathbb{X}_1'+\mathbb{X}_2'$. Similarly, for
	$\boldsymbol{v}=\boldsymbol{v}_1+\boldsymbol{v}_2\in \mathbb{X}_1+\mathbb{X}_2$ and $\boldsymbol{f}\in \mathbb{X}_1'\cap \mathbb{X}_2'$, the canonical
	pairing is given by
	\[
	\langle \boldsymbol{v},\boldsymbol{f}\rangle
	=
	\langle \boldsymbol{v}_1,\boldsymbol{f}\rangle+\langle \boldsymbol{v}_2,\boldsymbol{f}\rangle.
	\]
	Consequently, the norms of the sum and intersection spaces admit the following
	dual representations
	\[
	\|\boldsymbol{v}\|_{\mathbb{X}_1+\mathbb{X}_2}
	=
	\sup_{0\neq \boldsymbol{f}\in \mathbb{X}_1'\cap \mathbb{X}_2'}
	\frac{
		\left|
		\langle \boldsymbol{v}_1,\boldsymbol{f}\rangle+\langle \boldsymbol{v}_2,\boldsymbol{f}\rangle
		\right|
	}{
		\|\boldsymbol{f}\|_{\mathbb{X}_1'\cap \mathbb{X}_2'}
	},
	\  \boldsymbol{v}=\boldsymbol{v}_1+\boldsymbol{v}_2,
	\]
	and
	\[
	\|\boldsymbol{f}\|_{\mathbb{X}_1'\cap \mathbb{X}_2'}
	=
	\sup_{0\neq \boldsymbol{v}=\boldsymbol{v}_1+\boldsymbol{v}_2\in \mathbb{X}_1+\mathbb{X}_2}
	\frac{
		\left|
		\langle \boldsymbol{v}_1,\boldsymbol{f}\rangle+\langle \boldsymbol{v}_2,\boldsymbol{f}\rangle
		\right|
	}{
		\|\boldsymbol{v}\|_{\mathbb{X}_1+\mathbb{X}_2}
	}.
	\]
	These standard properties of sum and intersection spaces can be found,
	for example, in \cite{Triebel-1978}.
	
	Let $\mathbb{X}$ be a Banach space and let $0<T\leq\infty$. For
	$1\leq s<\infty$, we denote by $\mathrm{L}^s(0,T;\mathbb{X})$ the Bochner space of
	(equivalence classes of) strongly measurable functions
	$\boldsymbol{v}:(0,T)\to \mathbb{X}$ such that
	\begin{align*}
	\|\boldsymbol{v}\|_{\mathrm{L}^s(0,T;\mathbb{X})}
	:=
	\left(
	\int_0^T \|\boldsymbol{v}(t)\|_{\mathbb{X}}^s\d t
	\right)^{1/s}<\infty.
	\end{align*}
	If, in addition, $\mathbb{X}$ is reflexive and $1<s<\infty$, then $\bigl(\mathrm{L}^s(0,T;\mathbb{X})\bigr)'\cong\mathrm{L}^{s'}(0,T;\mathbb{X}')$, where
	$s'=\frac{s}{s-1}$ is the conjugate exponent of $s$.

	\subsection{Geometric setting of the domain}
	Let us recall the precise definition of the class of domains
	$\Omega\subset\mathbb{R}^d,$ $d\geq 2$ with $\partial\Omega\neq\emptyset$ under consideration.
	
	\begin{definition}[{\cite[Definition 1.1]{Farwig-2009}}]\label{def-c11}
	We say that a domain $\Omega\subset\mathbb{R}^d$, $d\geq 2$, is of \emph{uniform $\mathrm{C}^{1,1}$-type} if the following local flattening property holds at every boundary point. There exist constants $\alpha,\beta,K>0$, independent of the point chosen, such that for each $x_0\in\partial\Omega$ one can find a Cartesian coordinate system centered at $x_0$, with coordinates
		\begin{align*}
		y=(y',y_d),\qquad y'=(y_1,\ldots,y_{d-1}),
		\end{align*}
		and a function $h\in \mathrm{C}^{1,1}$ defined on $\{y':|y'|\leq\alpha\}$, satisfying $\|h\|_{\mathrm{C}^{1,1}}\leq K$, with the following property. If
		\begin{align*}
		U_{\alpha,\beta,h}(x_0)=
		\left\{
		y=(y',y_d)\in\mathbb{R}^d:
		|y_d-h(y')|<\beta,\ |y'|<\alpha
		\right\}
		\end{align*}
		denotes the associated neighborhood of $x_0$, then the boundary and the domain admit, respectively, the local representations
		\begin{align*}
		U_{\alpha,\beta,h}(x_0)\cap\partial\Omega&=
		\left\{(y',h(y')):|y'|<\alpha\right\},\\
		\Omega\cap U_{\alpha,\beta,h}(x_0)&=
		\left\{
		(y',y_d):
		h(y')-\beta<y_d<h(y'),\ |y'|<\alpha
		\right\}.
		\end{align*}
		This class of domains (see, e.g., \cite{Galdi-2011}) is sufficiently regular to allow for the trace and Sobolev embedding theorems used throughout the paper, while still being general enough to include bounded Lipschitz domains with a $\mathrm{C}^{1,1}$ boundary, exterior domains, and perturbed half-spaces.
	\end{definition}

	We briefly recall some standard examples of domains satisfying the
	uniform $\mathrm{C}^{1,1}$-regularity assumption used throughout this paper.
	In particular, the class of uniform $\mathrm{C}^{1,1}$-domains contains both
	bounded and unbounded domains.
	
	\begin{example}
		Besides bounded domains with $\mathrm{C}^{1,1}$ boundary, the class of uniform
		$\mathrm{C}^{1,1}$-domains contains a variety of unbounded geometries relevant
		to fluid mechanics.
	\begin{enumerate}
		\item The upper half-space
	$
		\mathbb{R}^d_+	:=
		\left\{	x=(x',x_d)\in\mathbb{R}^d:x_d>0	\right\}
	$
		is a uniform $\mathrm{C}^{1,1}$-domain. Indeed, its boundary is the hyperplane
	$
		\partial\mathbb{R}^d_+
		=
		\left\{
		x_d=0
		\right\},
	$
		which can be represented locally as the graph of the function
	$
		h(x')=0.
	$
		In particular, $h\in \mathrm{C}^\infty(\mathbb{R}^{d-1})$.
		\item For $R>0$, the exterior domain
	$
		\Omega
		=
		\mathbb{R}^n\setminus\overline{B_R(0)}
	$
		is an unbounded uniform $\mathrm{C}^{1,1}$-domain. Its boundary is the sphere
	$
		\partial\Omega=\partial B_R(0),
	$
		which is of class $\mathrm{C}^\infty$ and hence of class $\mathrm{C}^{1,1}$. More generally, if \(\mathcal{O}\subset\mathbb{R}^n\) is a bounded	domain of uniform \(\mathrm{C}^{1,1}\)-type, then its exterior domain
	$
		\Omega=\mathbb{R}^n\setminus\overline{\mathcal{O}}
	$
		is an unbounded uniform \(\mathrm{C}^{1,1}\)-domain, with
	$
		\partial\Omega=\partial\mathcal{O}.
	$
		
	\item Let $h>0$ and consider the infinite layer
$
	\Omega
	=
	\mathbb{R}^{d-1}\times(0,h).
$
	Then $\Omega\subset\mathbb{R}^d$ is an unbounded domain with nonempty
	boundary. Its boundary consists of two parallel hyperplanes:
	$
	\partial\Omega
	=
	\big(\mathbb{R}^{d-1}\times\{0\}\big)
	\cup
	\big(\mathbb{R}^{d-1}\times\{h\}\big).
$
	Since both boundary components are flat, $\partial\Omega$ is of class
	$\mathrm{C}^\infty$ and, in particular, is uniformly $\mathrm{C}^{1,1}$. Hence
	$\Omega$ is a uniform $\mathrm{C}^{1,1}$-domain.
	
		\item The infinite cylinder
	$
		\Omega
		=
		B_R^{d-1}(0)\times\mathbb{R},
		\ R>0,
	$
		is another example of an unbounded uniform $\mathrm{C}^{1,1}$-domain. Its
		boundary is smooth and can be covered by coordinate neighborhoods in
		which it is represented as the graph of a function with uniformly
		bounded $\mathrm{C}^{1,1}$-norm.
		\item More generally, let
	$
		h\in \mathrm{C}^{1,1}(\mathbb{R}^{d-1})
	$
		with a uniform $\mathrm{C}^{1,1}$ bound, and consider the epigraph
	$
		\Omega_h
		:=
		\left\{
		(x',x_d)\in\mathbb{R}^d:
		x_d>h(x')
		\right\}.
	$
		Then $\Omega_h$ is a uniform $\mathrm{C}^{1,1}$-domain whenever the defining
		function $h$ satisfies the uniform bounds required in
		Definition~\ref{def-c11}. For instance, one may take
	$
		h(x')=|x'|^2.
	$
	\end{enumerate}	
		
	\end{example}

	The class of uniform $\mathrm{C}^{1,1}$-domains therefore includes many
	standard unbounded domains with nonempty boundary, such as half-spaces,
	exterior domains, and infinite cylinders. Notice that
	$\mathbb{R}^d$ itself is not included in this definition since
$
	\partial\mathbb{R}^d=\varnothing.
$

\subsection{Function spaces}
Throughout this paper, $\Omega\subseteq\mathbb{R}^{d}$, $d\in\{2,3\}$, denotes a domain of uniform $\mathrm{C}^{1,1}$-type. We now collect the function spaces used in the sequel.

For $1<p<\infty$, the Banach space of $p$-integrable vector fields on $\Omega$ is denoted by $\mathbb{L}^p(\Omega):=\mathrm{L}^{p}(\Omega;\mathbb{R}^{d})$, equipped with the norm
\begin{align*}
	\|\boldsymbol{y}\|_{\mathbb{L}^{p}(\Omega)}=
	\left(\int_{\Omega}|\boldsymbol{y}(x)|^{p}\,\mathrm{d}x\right)^{1/p}.
\end{align*}
The endpoint case $\mathbb{L}^{\infty}(\Omega):=\mathrm{L}^{\infty}(\Omega;\mathbb{R}^{d})$ consists of the essentially bounded vector fields, with norm
\begin{align*}
	\|\boldsymbol{y}\|_{\mathbb{L}^{\infty}(\Omega)}
	=
	\operatorname*{ess\,sup}_{x\in\Omega}|\boldsymbol{y}(x)|.
\end{align*}
For $p=2$, the space $\mathbb{L}^{2}(\Omega)$ carries a natural Hilbert space structure, with inner product
\begin{align*}
	(\boldsymbol{y}_1,\boldsymbol{y}_2)
	=
	\int_{\Omega}\boldsymbol{y}_1(x)\cdot \boldsymbol{y}_2(x)\,\mathrm{d}x,
	\qquad
	\boldsymbol{y}_1,\boldsymbol{y}_2\in \mathbb{L}^{2}(\Omega).
\end{align*}

We further denote by $\mathbb{H}^1(\Omega):=\mathrm{H}^{1}(\Omega;\mathbb{R}^{d})$ the Sobolev space of vector fields in $\mathbb{L}^{2}(\Omega)$ whose weak first-order derivatives also belong to $\mathbb{L}^{2}(\Omega)$. This space becomes a Hilbert space when endowed with the inner product
\begin{align*}
	(\boldsymbol{y}_1,\boldsymbol{y}_2)_{\mathbb{H}^{1}}
	=
	(\boldsymbol{y}_1,\boldsymbol{y}_2)
	+
	(\nabla\boldsymbol{y}_1,\nabla\boldsymbol{y}_2),
\end{align*}
where the gradient inner product is defined componentwise by
\begin{equation*}
	(\nabla\boldsymbol{y}_1,\nabla\boldsymbol{y}_2)
	=
	\sum_{i=1}^{d}
	\int_{\Omega}
	\frac{\partial \boldsymbol{y}_1}{\partial x_{i}}\cdot
	\frac{\partial \boldsymbol{y}_2}{\partial x_{i}}
	\,\mathrm{d}x,
	\qquad
	\boldsymbol{y}_1,\boldsymbol{y}_2\in \mathbb{H}^{1}(\Omega).
\end{equation*}

We denote by
$\mathbb{C}_0^{\infty}(\Omega):=\mathrm{C}_{0}^{\infty}(\Omega;\mathbb{R}^{d}),
$
the space of all infinitely differentiable \(\mathbb{R}^{d}\)-valued functions with compact support in \(\Omega\).	Let 
\begin{align*}
\mathbb{C}_{0,\sigma}^{\infty}(\Omega) := \left\{
\boldsymbol{y}\in \mathbb{C}_0^{\infty}(\Omega):\nabla\cdot \boldsymbol{y}=0\right\},
\end{align*}
denote	  the space of smooth, compactly supported divergence-free vector fields. We define
\begin{align*}
\mathbb{H}
=\mathbb{L}^2_{\sigma}=
\overline{\mathbb{C}_{0,\sigma}^{\infty}(\Omega)}^{\mathbb{L}^{2}(\Omega)},
\ 
\mathbb{V}
=\mathbb{H}_{0,\sigma}^1=
\overline{\mathbb{C}_{0,\sigma}^{\infty}(\Omega)}^{\mathbb{H}^{1}(\Omega)}, \ \mathbb{L}^p_{\sigma}:= \overline{\mathbb{C}_{0,\sigma}^{\infty}(\Omega)}^{\mathbb{L}^{p}(\Omega)},
\end{align*}
for $p\in(2,\infty)$.	The space $\mathbb{H}$ inherits the inner product and norm from $\mathbb{L}^{2}(\Omega)$, namely,
\begin{align*}
(\boldsymbol{y}_1,\boldsymbol{y}_2)_{\mathbb{H}}
=
(\boldsymbol{y}_1,\boldsymbol{y}_2),
\ 
\|\boldsymbol{y}_1\|_{\mathbb{H}}
=
\|\boldsymbol{y}_1\|_{\mathbb{L}^{2}(\Omega)}.
\end{align*}
For simplicity, we denote $(\cdot,\cdot)_{\mathbb{H}}$ as $(\cdot,\cdot)$. Likewise, \(\mathbb{V}\) is endowed with the inner product
\begin{equation*}
	(\boldsymbol{y}_1,\boldsymbol{y}_2)_{\mathbb{V}}
	=
	(\boldsymbol{y}_1,\boldsymbol{y}_2) 
	+
	(\nabla\boldsymbol{y}_1,\nabla\boldsymbol{y}_2),
\end{equation*}
which induces the norm
\begin{equation*}
	\|\boldsymbol{y}_1\|_{\mathbb{V}}
	=
	\left(
	\|\boldsymbol{y}_1\|_{\mathbb{H}}^{2}
	+
	\|\nabla \boldsymbol{y}_1\|_{\mathbb{H}}^{2}
	\right)^{1/2}.
\end{equation*}
For \(p\in(2,\infty)\),   the space \(\mathbb{L}_{\sigma}^{p}\) is endowed with the norm inherited from \(\mathbb{L}^{p}(\Omega)\), namely,
\begin{align*}
\|\boldsymbol{y}\|_{\mathbb{L}_{\sigma}^{p}}
:=
\|\boldsymbol{y}\|_{\mathbb{L}^{p}(\Omega)},
\ 
\boldsymbol{y}\in \mathbb{L}_{\sigma}^{p}.
\end{align*}
We denote by $\langle\cdot,\cdot\rangle$ the duality pairing between
\(\mathbb{V}\) and \(\mathbb{V}'\), as well as between
\(\mathbb{L}^p_\sigma \) and
\(\mathbb{L}^{p'}_\sigma \), where
$
p'=\frac{p}{p-1},\ p\in(1,\infty).
$

For a subdomain $\mathfrak{D}\subset\mathbb{R}^{d},$ $d\in\{2,3\}$ and $1<q<\infty$, let
\begin{align*}
\mathrm{L}_0^q(\mathfrak{D})
:=
\left\{
f\in \mathrm{L}^q(\mathfrak{D})
:\;
\int_{\mathfrak{D}} f(x)\d x=0
\right\}
\end{align*}
denote the space of zero-mean \(\mathrm{L}^q\)-functions. We denote by
\begin{align*}
\W_0^{1,q}(\mathfrak{D})
:=
\overline{\mathrm{C}_0^\infty(\mathfrak{D})}^{\W^{1,q}(\mathfrak{D})}
\end{align*}
the Sobolev space of functions with vanishing trace on $\partial\mathfrak{D}$,  and define 
$
\mathbb{W}_0^{1,q}(\mathfrak{D})
:=
\W_0^{1,q}(\mathfrak{D};\mathbb{R}^d)
$
for the Sobolev space of \(\mathbb{R}^d\)-valued functions whose components belong to $\W_0^{1,q}(\mathfrak{D})$, endowed with its standard norm.

\subsection{The Stokes operator}
The Stokes operator constitutes a central object in the mathematical
analysis of incompressible fluid equations, in particular in the study
of the Navier-Stokes system. In this section, we consider the
Stokes operator on unbounded domains $\Omega\subset\mathbb{R}^n$ of
uniform $\mathrm{C}^{1,1}$-type. On general unbounded domains, the classical
$\mathbb{L}^q$-framework presents certain difficulties. In particular, for
$q\neq2$, the Helmholtz projection need not be bounded on
$\mathbb{L}^q(\Omega)$ unless suitable assumptions are imposed on the geometry
of $\Omega$ at infinity (see \cite{VNME} for counter examples). Consequently, a direct $\mathbb{L}^q$-theory for the
Helmholtz decomposition and the Stokes operator is not available for
arbitrary unbounded domains.

To overcome this difficulty, Farwig, Kozono, and Sohr~\cite{Farwig-2007}
introduced modified function spaces which incorporate the natural
$\mathrm{L}^2$-structure. For $q\in(1,\infty)$, define
\begin{align*}
\widetilde{\mathbb{L}}^q_\sigma 
:=
\begin{cases}
	\mathbb{L}^q_\sigma \cap \mathbb{L}^2_\sigma,
	& q\in[2,\infty),\\[2mm]
	\mathbb{L}^q_\sigma +\mathbb{L}^2_\sigma ,
	& q\in(1,2).
\end{cases}
\end{align*}
The corresponding space of gradient fields is constructed in an
analogous manner. More precisely, let
\begin{align*}
\mathbb{G}^q(\Omega)
:=
\left\{
\nabla\pi\in \mathbb{L}^q(\Omega):
\pi\in \mathrm{L}^q_{\mathrm{loc}}(\Omega)
\right\}.
\end{align*}
We then introduce
\begin{align*}
\widetilde{\mathbb{G}}^q(\Omega)
:=
\begin{cases}
	\mathbb{G}^q(\Omega)\cap \mathbb{G}^2(\Omega), & q\in[2,\infty),\\[2mm]
	\mathbb{G}^q(\Omega)+\mathbb{G}^2(\Omega), & q\in(1,2).
\end{cases}
\end{align*}
For unbounded domains of uniform $\mathrm{C}^{1,1}$-type, the Helmholtz
decomposition in these modified spaces is available for every
$q\in(1,\infty)$; namely,
$
\widetilde{\mathbb{L}}^q(\Omega)
=
\widetilde{\mathbb{L}}^q_\sigma
+
\widetilde{\mathbb{G}}^q(\Omega).
$
Moreover, the associated Helmholtz projection
$
\widetilde{\mathbb P}_q:
\widetilde{\mathbb{L}}^q(\Omega)
\to 
\widetilde{\mathbb{L}}^q_\sigma
$
is bounded, and $\mathbb{C}^\infty_{0,\sigma}(\Omega)$ is dense in
$\widetilde{\mathbb{L}}^q_\sigma$. The duality properties
$
\bigl(\widetilde{\mathbb{L}}^q_\sigma\bigr)'
=
\widetilde{\mathbb{L}}^{q'}_\sigma,
\ 
(\widetilde{\mathbb P}_q)'
=
\widetilde{\mathbb P}_{q'}
$
also hold, where $q'=\frac{q}{q-1}$ denotes the conjugate exponent of $q$.

We next recall the spaces associated with the Dirichlet Laplacian.
Let
\begin{align*}
\D^q(\Omega)
:=
\W^{2,q}(\Omega)\cap \W^{1,q}_0(\Omega)
\end{align*}
denote the domain of the Dirichlet Laplace operator $\Delta_q$ in
$\mathrm{L}^q(\Omega)$. In accordance with the preceding construction, we
define, for $q\in(1,\infty)$,
\[
\widetilde \D^q(\Omega)
:=
\begin{cases}
	\D^q(\Omega)\cap \D^2(\Omega), & q\geq2,\\[2mm]
	\D^q(\Omega)+\D^2(\Omega), & q<2,
\end{cases}
\]
and
\[
\widetilde{\W}^{1,q}_0(\Omega)
:=
\begin{cases}
	\W^{1,q}_0(\Omega)\cap \W^{1,2}_0(\Omega), & q\geq2,\\[2mm]
	\W^{1,q}_0(\Omega)+\W^{1,2}_0(\Omega), & q<2.
\end{cases}
\]

The Stokes operator on  $\widetilde{\mathbb{L}}^q_\sigma$ is then defined by (\cite{Farwig-2009})
\begin{align*}
\widetilde{\mathcal{A}}_q
:=
-\widetilde{\mathbb P}_q\Delta_q, \ \D(\widetilde{\mathcal{A}}_q)
:=
\widetilde{\mathbb{D}}^q(\Omega)
\cap
\widetilde{\mathbb{L}}^q_\sigma,
\  1<q<\infty,
\end{align*}
where $\widetilde{\mathbb{D}}^q(\Omega)=	\widetilde \D^q(\Omega;\mathbb{R}^d)$. 
It is known from \cite[Theorem 1.3]{Farwig-2009} that $-\widetilde{\mathcal{A}}_q$ generates an analytic semigroup on
$\widetilde{\mathbb{L}}^q_\sigma$ and that $\widetilde{\mathcal{A}}_q$ enjoys
maximal $\mathrm{L}^s$-regularity in these spaces for every
$s\in(1,\infty)$; see~\cite[Theorem 1.4]{Farwig-2009}. These properties
provide the functional-analytic framework required for the subsequent
analysis. 


	\begin{definition}\label{def-sect}
	Let $\omega\in[0,\pi)$. We define the closed sector of angle $\omega$ in the complex plane by
	$$
	\Sigma_\omega
	:=
	\{0\neq z\in\mathbb{C}:|\arg z|\leq\omega\}\cup\{0\}=
	\left\{
	re^{i\varphi}: r\geq0,\ |\varphi|\leq\omega
	\right\}.
	$$
	A linear operator $\mathcal{T}$ on a
	Banach space $\mathbb{X}$, with dense domain and range, is said to be \emph{sectorial
		of type $\omega$} if its spectrum contained in $\Sigma_{\omega}$, that is, 
	$$
	\sigma(\mathcal{T})\subseteq\Sigma_\omega,
	$$ where 
$\sigma(\mathcal{T}):=\left\{\lambda\in\mathbb{C}:\lambda\mathrm{I}-\mathcal{T}\ \text{ is not invertible on }\ \mathbb{X}\right\}.$	
Moreover, for every $\theta\in(\omega,\pi)$, the resolvent operators satisfy the uniform bound
\begin{align*}
	\sup\limits_{\lambda\notin\Sigma_{\theta}}\|\lambda(\lambda\I-\mathcal{T})^{-1}\|_{\mathcal{L}(\mathbb{X})}<\infty.
\end{align*}
\end{definition}

We cite the following result on the Stokes operator in domains of uniform $\mathrm{C}^{1,1}$-type (\cite[Theorem 1.3]{Farwig-2009}, \cite[Theorem 2.2]{Kunstmann-2008}).
\begin{theorem}[{\cite[Theorem 1.3]{Farwig-2009}}]
Let $\Omega\subset\mathbb{R}^d$ be a domain of uniform $\mathrm{C}^{1,1}$-type. For $q\in(1,\infty)$ and $\varepsilon>0$, the Stokes operator $\widetilde{\mathcal{A}}_q$ generates a analytic semigroup
$\bigl(e^{-t\widetilde{\mathcal{A}}_q}\bigr)_{t\geq0}$
on $\widetilde{\mathbb L}^q_\sigma$, satisfying
\begin{align*}
	\|e^{-t\widetilde{\mathcal{A}}_q}
	\boldsymbol{f}\|_{\widetilde{\mathbb{L}}^q_{\sigma}(\Omega)}
	\leq
	M e^{\varepsilon t}\|\boldsymbol{f}\|_{\widetilde{\mathbb{L}}^q_{\sigma}(\Omega)},
	\qquad
	\boldsymbol{f}\in\widetilde{\mathbb{L}}^q_\sigma(\Omega), \ t>0,
\end{align*}
where $M=M(\varepsilon,q,\alpha,\beta,K)$, and $\alpha,\beta,K$ are the constants from Definition~\ref{def-c11}. Moreover, the shifted operator $\varepsilon+\widetilde{\mathcal{A}}_q$ is sectorial of type $0$, and the duality relation
$(\widetilde{\mathcal{A}}_q)'=\widetilde{\mathcal{A}}_{q'}$
holds, where $q'$ denotes the conjugate exponent of $q$.
\end{theorem}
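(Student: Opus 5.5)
The plan is to reduce everything to a uniform resolvent estimate for the Stokes resolvent problem
\[
(\lambda+\varepsilon)\boldsymbol{u}-\Delta\boldsymbol{u}+\nabla\pi=\boldsymbol{f},\quad \nabla\cdot\boldsymbol{u}=0 \ \text{ in }\Omega,\quad \boldsymbol{u}=\mathbf{0}\ \text{ on }\partial\Omega,
\]
with $\boldsymbol{f}\in\widetilde{\mathbb{L}}^q_\sigma$ and $\lambda\in\mathbb{C}\setminus(-\Sigma_\theta)$ for each $\theta\in(0,\pi)$, i.e.\ $\lambda$ away from the negative real axis. The goal is the bound
\[
|\lambda+\varepsilon|\,\|\boldsymbol{u}\|_{\widetilde{\mathbb{L}}^q}+\|\nabla^2\boldsymbol{u}\|_{\widetilde{\mathbb{L}}^q}+\|\nabla\pi\|_{\widetilde{\mathbb{L}}^q}\le C\|\boldsymbol{f}\|_{\widetilde{\mathbb{L}}^q},
\]
with $C$ depending only on $(\varepsilon,\theta,q,\alpha,\beta,K)$. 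Once this is available, two consequences follow. First, $\varepsilon+\widetilde{\mathcal A}_q$ is sectorial of type $0$ in the sense of Definition~\ref{def-sect}; density of the domain comes from the density of $\mathbb{C}^\infty_{0,\sigma}(\Omega)$, and density of the range from invertibility. Second, the analytic semigroup with the bound $Me^{\varepsilon t}$ is obtained from the Dunford integral $e^{-t\widetilde{\mathcal A}_q}=\frac{1}{2\pi i}\int_\Gamma e^{-t\mu}(\mu-\widetilde{\mathcal A}_q)^{-1}\,\d\mu$ over a contour $\Gamma$ surrounding $\Sigma_\omega-\varepsilon$.

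I would first treat the case $q\ge2$. On the $\mathbb{L}^2$ component the estimate is free: $\widetilde{\mathcal A}_2$ is the Friedrichs extension of a nonnegative symmetric form, so it is self-adjoint and nonnegative, and the spectral theorem gives $\|(\mu+\widetilde{\mathcal A}_2)^{-1}\|\le C_\theta/|\mu|$ off the negative sector. For the $\mathbb{L}^q$ component I would proceed as follows.
\begin{enumerate}
\item Cover $\partial\Omega$ by the neighbourhoods $U_{\alpha,\beta,h}(x_0)$ of Definition~\ref{def-c11}. Because the constants are uniform, one can choose a locally finite partition of unity $\{\varphi_j\}$ with uniformly bounded overlap and uniformly bounded derivatives.
\item Localize $\boldsymbol{u}$ and $\pi$ with the $\varphi_j$. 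Each local problem is either a whole-space problem or a bent half-space problem, and both have known resolvent estimates, the latter after flattening and a Neumann-series perturbation for small $\alpha$.
\item Since $\nabla\cdot(\varphi_j\boldsymbol{u})=\nabla\varphi_j\cdot\boldsymbol{u}\neq0$, correct each localized field by a Bogovski\u{\i} operator on the patch.
\item Sum over $j$ using $\ell^q$-summability, which holds because the overlap is uniformly bounded.
\end{enumerate}

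This produces a representation $R(\lambda)\boldsymbol{f}=\boldsymbol{u}+S(\lambda)\boldsymbol{f}$ in which $S(\lambda)$ collects lower-order terms. These are $\boldsymbol{u}\nabla\varphi_j$, $\pi\nabla\varphi_j$, and the Bogovski\u{\i} corrections. Each is bounded by $|\lambda+\varepsilon|^{-1/2}$ times lower norms plus local pressure norms. The shift $\varepsilon$ is what allows $|\lambda+\varepsilon|$ to be taken large, so that the lower-order perturbation is absorbed.

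The hard step will be the pressure. On a general unbounded domain there is no $\mathbb{L}^q$-Helmholtz projection, so $\pi\nabla\varphi_j$ cannot be controlled by $\nabla\pi$ globally. I would normalise $\pi$ to have zero mean on each patch and use a local Poincar\'e inequality with constants uniform in $j$. The remaining global information comes from the $\mathbb{L}^2$ solution: $\nabla\pi\in\mathbb{G}^2$ is controlled by the $\mathbb{L}^2$ theory, and this is exactly why working in the intersection $\widetilde{\mathbb{L}}^q_\sigma=\mathbb{L}^q_\sigma\cap\mathbb{L}^2_\sigma$ closes the estimate. I expect this step to need a duality argument against the dual local problem, and it is where I anticipate the real difficulty.

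The case $q<2$ then follows by duality. Using $(\widetilde{\mathbb{L}}^q_\sigma)'=\widetilde{\mathbb{L}}^{q'}_\sigma$ and $(\widetilde{\mathbb P}_q)'=\widetilde{\mathbb P}_{q'}$, together with the identity $(\widetilde{\mathcal A}_q\boldsymbol{u},\boldsymbol{v})=(\nabla\boldsymbol{u},\nabla\boldsymbol{v})=(\boldsymbol{u},\widetilde{\mathcal A}_{q'}\boldsymbol{v})$ on the dense core $\mathbb{C}^\infty_{0,\sigma}(\Omega)$, one gets $(\widetilde{\mathcal A}_q)'\supseteq\widetilde{\mathcal A}_{q'}$. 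Equality holds because both $\lambda+\varepsilon+(\widetilde{\mathcal A}_q)'$ and $\lambda+\varepsilon+\widetilde{\mathcal A}_{q'}$ are bijective, the latter by the case $q'>2$ just treated. The resolvent bound for $\widetilde{\mathcal A}_q$ is then the transpose of the bound for $\widetilde{\mathcal A}_{q'}$.
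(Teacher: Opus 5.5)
The paper gives no proof of this statement. It quotes the result from Farwig--Kozono--Sohr \cite{Farwig-2009} (see also \cite{Kunstmann-2008}) and uses it as a black box, so your outline has to stand on its own. Its architecture is sensible: uniform localization, the $\mathbb{L}^2$ theory as the only source of global information, and duality for $q<2$. However, the step that carries the whole theorem is missing.

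\textbf{The pressure step.} Normalizing $\pi$ to have mean zero on each patch and applying Poincar\'e only gives $\|\pi-\pi_j\|_{\mathrm{L}^q(\Omega_j)}\le C\|\nabla\pi\|_{\mathbb{L}^q(\Omega_j)}$. This is of the same order as the principal term. The patch size is fixed by $\alpha,\beta,K$, so $|\nabla\varphi_j|$ is bounded but not small, and, as you note yourself, the pressure terms carry no factor $|\lambda+\varepsilon|^{-1/2}$. Hence $(\pi-\pi_j)\nabla\varphi_j$ cannot be absorbed. Knowing $\nabla\pi\in\mathbb{G}^2(\Omega)$ controls only local $\mathrm{L}^2$ norms, which for $q>2$ do not dominate local $\mathrm{L}^q$ norms. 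This is exactly where a pure $\mathrm{L}^q$ theory breaks down on general unbounded domains, so deferring it to an unspecified ``duality argument against the dual local problem'' leaves the theorem unproved.

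One mechanism that does close the estimate is a finite bootstrap in the integrability exponent.
\begin{itemize}
\item By Sobolev--Poincar\'e, $\|\pi-\pi_j\|_{\mathrm{L}^q(\Omega_j)}\le C\|\nabla\pi\|_{\mathbb{L}^r(\Omega_j)}$ with $r=\max\{2,\frac{qd}{q+d}\}<q$ and $C$ uniform in $j$.
\item Summing with $\ell^r\subset\ell^q$ and bounded overlap gives the bound $C\|\nabla\pi\|_{\mathbb{L}^r(\Omega)}$.
\item The lower-order velocity terms are handled the same way. This includes $(\lambda+\varepsilon)$ times the Bogovski\u{\i} correction, using $\mathcal{B}_j\colon\mathrm{L}^r_0(\Omega_j)\to\mathbb{W}^{1,r}_0(\Omega_j)\hookrightarrow\mathbb{L}^q(\Omega_j)$.
\item One then inducts along $2=r_0<r_1<\dots<r_k=q$.
\end{itemize}
The anchor of this induction is not ``free''. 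The $\mathrm{L}^2$ bounds for $\nabla^2\boldsymbol{u}$ and $\nabla\pi$ require $\mathrm{H}^2$-regularity of the Stokes resolvent on uniform $\mathrm{C}^{1,1}$-domains, with constants depending on a lower bound for $|\lambda+\varepsilon|$.

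\textbf{The role of the shift.} The claim that ``the shift $\varepsilon$ is what allows $|\lambda+\varepsilon|$ to be taken large'' is incorrect. Here $\varepsilon>0$ is arbitrary and $\lambda$ ranges over all of $\mathbb{C}\setminus(-\Sigma_\theta)$, so $|\lambda+\varepsilon|$ can be of order $\varepsilon$. An absorption argument only yields the estimate for $|\lambda+\varepsilon|\ge\lambda_0$, with $\lambda_0=\lambda_0(q,\theta,\alpha,\beta,K)$ independent of $\varepsilon$. That gives sectoriality of $\omega_0+\widetilde{\mathcal{A}}_q$ and a bound $Me^{\omega_0 t}$ for some possibly large $\omega_0$, whereas the statement is for every $\varepsilon>0$. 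To get that you must also show $\sigma(\widetilde{\mathcal{A}}_q)\subseteq[0,\infty)$. For $q>2$ this can come from the self-adjoint $\mathrm{L}^2$ resolvent, the resolvent identity with a large $\mu_0$, and Sobolev embeddings. You then need continuity of the resolvent on the remaining bounded part of the shifted sector. The bootstrap above delivers this directly.

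\textbf{The duality step.} Once $(\widetilde{\mathcal{A}}_q)'=\widetilde{\mathcal{A}}_{q'}$ is known, transposing $(\lambda+\varepsilon+\widetilde{\mathcal{A}}_{q'})^{-1}$ gives the resolvent of $(\widetilde{\mathcal{A}}_q)''$, which is the closure of $\widetilde{\mathcal{A}}_q$. To conclude anything for $\widetilde{\mathcal{A}}_q$ on its prescribed domain $\widetilde{\mathbb{D}}^q\cap\widetilde{\mathbb{L}}^q_\sigma$, you need this operator to be closed. That requires an a priori $\mathbb{W}^{2,q}+\mathbb{W}^{2,2}$ estimate, which duality alone does not provide.
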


Let $\mathbb{H}=\mathbb{L}^2_\sigma$ and
$\mathbb{V}=\mathbb{H}^1_{0,\sigma}$. We denote by $\mathcal{A}$ the
Stokes operator on $\mathbb{H}$, which we identify with the
$\mathbb{L}^2$-realization $\widetilde{\mathcal{A}}_2$ of the Stokes operator. It is
defined by
\begin{align*}
	\mathcal{A}\boldsymbol{u}=-\mathbb{P}\Delta \boldsymbol{u},\ \boldsymbol{u}\in \mathrm{D}(\mathcal{A})=\mathbb{H}^2(\Omega)\cap\mathbb{V},
\end{align*}
where $\mathbb{P}$ denotes the Helmholtz--Leray projection onto
$\mathbb{H}$. Moreover,
$\mathrm{D}(\mathcal{A}^{1/2})=\mathbb{V},$
and $\mathcal{A}$ is a nonnegative self-adjoint operator on $\mathbb{H}$.

\subsection{Bilinear operator}\label{opeB}
For $\boldsymbol{y}_1,\boldsymbol{y}_2\in\mathbb{V}$, we define a
\emph{trilinear map} $b(\cdot,\cdot,\cdot):\mathbb{V}\times\mathbb{V}\times\mathbb{V}\to\mathbb{R}$ by
\begin{align*}
b(\boldsymbol{y}_1,\boldsymbol{y}_2,\boldsymbol{y}_3)
=\int_{\mathcal{O}}(\boldsymbol{y}_1(x)\cdot\nabla)\boldsymbol{y}_2(x)\cdot\boldsymbol{y}_3(x)\,\mathrm{d}x
=\sum_{i,j=1}^d\int_{\mathcal{O}} y_{1,i}(x)\frac{\partial y_{2,j}(x)}{\partial x_i}y_{3,j}(x)\,\mathrm{d}x .
\end{align*}
From this trilinear form one builds a bilinear operator
$\mathcal{B}:\mathbb{V}\times\mathbb{V}\to\mathbb{V}'$
by
\begin{align*}
	\langle \mathcal{B}(\boldsymbol{y}_1,\boldsymbol{y}_2),\boldsymbol{y}_3\rangle
	=
	b(\boldsymbol{y}_1,\boldsymbol{y}_2,\boldsymbol{y}_3), \ \text{ for every } \
	\boldsymbol{y}_3\in\mathbb{V}.
\end{align*}
In particular, we write
$
\mathcal{B}(\boldsymbol{y})
:=\mathcal{B}(\boldsymbol{y},\boldsymbol{y})\in\mathbb{V}'.
$
A standard integration-by-parts argument, together with the divergence-free condition and homogeneous (or periodic) boundary conditions, yields the following structural properties:
\begin{equation}\label{b0}
	\left\{
	\begin{aligned}
		b(\boldsymbol{y},\boldsymbol{z},\boldsymbol{z}) &= 0,\ \text{ for all }\ \boldsymbol{y},\boldsymbol{z} \in\mathbb{V},\\
		b(\boldsymbol{y},\boldsymbol{z},\boldsymbol{w}) &=  -b(\boldsymbol{y},\boldsymbol{w},\boldsymbol{z}),\ \text{ for all }\ \boldsymbol{y},\boldsymbol{z},\boldsymbol{w}\in \mathbb{V}.
	\end{aligned}
	\right.
\end{equation}

\subsection{Nonlinear operator}\label{opeC}
Next, we introduce the operator $\mathcal{C}_r(\boldsymbol{y}):=\mathbb{P}(|\boldsymbol{y}|^{r-1}\boldsymbol{y})$ for $r\geq1$. Throughout the rest of the paper we suppress the subscript and simply write $\mathcal{C}$ in place of $\mathcal{C}_r$. A direct calculation shows that
$
\langle\mathcal{C}(\boldsymbol{y}),\boldsymbol{y}\rangle =\|\boldsymbol{y}\|_{\mathbb{L}^{r+1}_{\sigma}}^{r+1}.
$
Moreover, following the argument of \cite[Section 2.4]{Gautam-2025}, the operator $\mathcal{C}$ satisfies the following monotonicity-type estimate:
\begin{align}\label{2.23}
	\langle\mathcal{C}(\boldsymbol{y})-\mathcal{C}(\boldsymbol{z}),
	\boldsymbol{y}-\boldsymbol{z}\rangle
	&\geq
	\frac{1}{2}\big\||\boldsymbol{y}|^{\frac{r-1}{2}}(\boldsymbol{y}-\boldsymbol{z})\big\|_{\mathbb{H}}^2
	+\frac{1}{2}\big\||\boldsymbol{z}|^{\frac{r-1}{2}}(\boldsymbol{y}-
	\boldsymbol{z})\big\|_{\mathbb{H}}^2 
	\nonumber\\&\geq
	\frac{1}{2^{r-1}}\|\boldsymbol{y}-
	\boldsymbol{z}\|_{\mathbb{L}_{\sigma}^{r+1}}^{r+1}\geq0,
\end{align}
for every $r\geq 1$.

\subsection{Abstract formulation and weak solution to the CBF system \eqref{CBF}}
Applying the Helmholtz-Hodge orthogonal projection $\mathbb{P}$ to \eqref{CBF}, we arrive at the abstract formulation
\begin{equation}\label{abstract-CBF}
	\left\{
	\begin{aligned}
		\frac{\mathrm{d}\boldsymbol{y}(t)}{\mathrm{d}t}+\mu \mathcal{A}\boldsymbol{y}(t) +\mathcal{B}(\boldsymbol{y}(t)) +\alpha \boldsymbol{y}(t)+ \beta\mathcal{C}(\boldsymbol{y}(t))&=\boldsymbol{g}(t),\\
		\boldsymbol{y}(0)&=\boldsymbol{y}_0\in\mathbb{H},
	\end{aligned}
	\right.
\end{equation}
for a.e.\ $t\in[0,T]$, where $\boldsymbol{g}:=\mathbb{P}\boldsymbol{f}\in \mathrm{L}^2(0,T;\mathbb{V}')$.

To formulate the notion of weak solution, we first introduce the relevant space of test functions. Set
$
\mathcal{D}_{\sigma}([0,T)\times\Omega):=\mathrm{C}_0^{\infty}([0,T);\mathbb{C}_0^{\infty}(\Omega)),
$
and define the space of divergence-free space-time test functions
\begin{align}\label{dvtest}
	\mathcal{V}_T:=\left\{\boldsymbol{\psi}\in \mathcal{D}_{\sigma}([0,T)\times\Omega):\nabla\cdot\boldsymbol{\psi}(\cdot,t)=0\right\}.
\end{align}
Since every $\boldsymbol{\psi}\in\mathcal{V}_T$ has compact support in $[0,T)$, it follows in particular that $\boldsymbol{\psi}(x,T)=0$.

With this notation in hand, we now state the definition of a \emph{weak solution} to \eqref{abstract-CBF}, for a fixed parameter $r\in[1,\infty)$. To this end, we introduce the exponent
\begin{align}\label{eqn-pd}
	p_d=\left\{\begin{array}{cc} 2&\text{ for }\ d=2, \ r\in[1,\infty), \\ \dfrac{4}{3}&\text{ for } \ d=3, \ r\in[1,3],\\
		2&\text{ for } \ d=3, \ r\in[3,\infty), \end{array}\right.
\end{align}
which will govern the integrability of the time derivative in the weak formulation below.

\begin{definition}\label{weakd}
	For $r\in[1,\infty)$,	a function  $$\boldsymbol{y}\in \mathrm{L}^{\infty}(0,T;\H)\cap \mathrm{L}^2(0,T;\mathbb{V})\cap \mathrm{L}^{r+1}(0,T; \mathbb{L}_{\sigma}^{r+1})),$$  with $\frac{\d\boldsymbol{y}}{\d t}\in \mathrm{L}^{p_d}(0,T;\mathbb{V}')+ \mathrm{L}^{\frac{r+1}{r}}(0,T;\mathbb{L}_{\sigma}^{\frac{r+1}{r}}),$ is called a \emph{weak solution} to the system (\ref{abstract-CBF}), if for every $\boldsymbol{g}\in \mathrm{L}^2(0,T;\mathbb{V}')$ and  $\boldsymbol{y}_0\in\H$ it satisfies
	\begin{align}\label{3.13}
		&-\int_{t_0}^{t_1}{(\boldsymbol{y}(s),\partial_t\boldsymbol{\psi}(s))}\d s+ \mu\int_{t_0}^{t_1}(\nabla\boldsymbol{y}(s),\nabla\boldsymbol{\psi}(s))\d s+\int_{t_0}^{t_1}\langle(\boldsymbol{y}(s)\cdot\nabla)\boldsymbol{y}(s),\boldsymbol{\psi}(s)\rangle\d s\nonumber\\&\quad+\alpha\int_{t_0}^{t_1}(\boldsymbol{y}(s),\boldsymbol{\psi}(s))\d s+\beta\int_{t_0}^{t_1}\langle\boldsymbol{y}(s)|\boldsymbol{y}(s)|^{r-1},\boldsymbol{\psi}(s)\rangle\d s\nonumber\\&= -{(\boldsymbol{y}(t_1),\boldsymbol{\psi}(t_1))}+{(\boldsymbol{y}(t_0),\boldsymbol{\psi}(t_0))}+\int_{t_0}^{t_1}\langle \boldsymbol{g}(s),\boldsymbol{\psi}(s)\rangle\d s,
	\end{align}
	for all test functions $\boldsymbol{\psi}\in\mathcal{V}_T$, almost all initial times $t_0\in[0,T)$, including zero, and almost every $t_1\in(t_0,T)$. A function $\boldsymbol{y}$ is called a \emph{global weak solution} if it is a weak solution for all $T>0$.
\end{definition}
\begin{definition}
	We say that $\boldsymbol{u}$ is a \emph{Leray-Hopf weak solution} of the CBF equations \eqref{abstract-CBF}, with initial data $\boldsymbol{u}_0\in\mathbb{H}$, if it is a weak solution in the sense of Definition \ref{weakd} that additionally satisfies the following properties:
	\begin{enumerate}
		\item  \emph{strong energy inequality}:
		\begin{align}\label{energy-inequality}
			&	\|\boldsymbol{y}(t_1)\|_{\H}^2+2\mu\int_{t_0}^{t_1}\|\nabla\boldsymbol{y}(s)\|_{\mathbb{H}}^2\d s+2\alpha\int_{t_0}^{t_1}\|\boldsymbol{y}(s)\|_{\mathbb{H}}^{2}\d s+2\beta\int_{t_0}^{t_1}\|\boldsymbol{y}(s)\|_{\mathbb{L}_{\sigma}^{r+1}}^{r+1}\d s\nonumber\\&\leq\|\boldsymbol{y}(t_0)\|_{\H}^2+2\int_{t_0}^{t_1}\langle\boldsymbol{g}(s),\boldsymbol{y}(s)\rangle\d s,
		\end{align}
		for almost every $t_0\in[0, T)$, including zero, and all $t_1\in(t_0, T)$.
		\item $\lim\limits_{t\downarrow 0}\|\boldsymbol{y}(t)-\boldsymbol{y}_0\|_{\mathbb{H}}=0$.
	\end{enumerate}
	
\end{definition}

\begin{remark}\label{rem-main}
We know that $\boldsymbol{y}\in \mathrm{L}^{\infty}(0,T;\mathbb{H})\subset \mathrm{L}^{\frac{r+1}{r}}(0,T;\mathbb{V}'+\mathbb{L}^{\frac{r+1}{r}}_{\sigma})$ and $\frac{\mathrm{d} \boldsymbol{y}}{\mathrm{d}t}\in  \mathrm{L}^{\frac{r+1}{r}}(0,T;\mathbb{V}'+\mathbb{L}^{\frac{r+1}{r}}_{\sigma})$; together these imply $\boldsymbol{y}\in \mathrm{W}^{1,\frac{r+1}{r}}(0,T;\mathbb{V}'+\mathbb{L}^{\frac{r+1}{r}}_{\sigma})\subset \mathrm{C}([0,T];\mathbb{V}'+\mathbb{L}^{\frac{r+1}{r}}_{\sigma})$, and the embedding $\mathbb{V}'+\mathbb{L}^{\frac{r+1}{r}}_{\sigma}\hookrightarrow \mathbb{H}$ is continuous. An application of Strauss' lemma (\cite[Lemma 8.1, Chapter 3]{Lions-1972}, \cite[Theorem 2.1]{Strauss-1966}, \cite[Lemma 1.4]{Temam-1984}, \cite[Proposition 1.7.1, Chapter 1, p.~61]{Cherier -2012}) then gives $\boldsymbol{y}\in\mathrm{C}_w([0,T];\mathbb{H})$, that is, $\boldsymbol{y}$ is weakly continuous in time with values in $\mathbb{H}$. More precisely, this means that for every fixed $\boldsymbol{\phi}\in\mathbb{H}$, the scalar function $[0,T]\ni t\mapsto (\boldsymbol{y}(t),\boldsymbol{\phi})\in\mathbb{R}$ is continuous on $[0,T]$. Consequently, the first two terms on the right-hand side of \eqref{3.13} are well defined.

An analogous argument to that in \cite[Lemma 2.4]{Galdi-2011} shows that if $\boldsymbol{y}$ is a weak solution of the system \eqref{abstract-CBF}, then \eqref{3.13} can be replaced by
\begin{align}\label{3.13-1}
	&{(\boldsymbol{y}(t_1),\boldsymbol{\psi})} + \mu\int_{t_0}^{t_1}(\nabla\boldsymbol{y}(s),\nabla\boldsymbol{\psi})\,\mathrm{d}s+\int_{t_0}^{t_1}\langle(\boldsymbol{y}(s)\cdot\nabla)\boldsymbol{y}(s),\boldsymbol{\psi}\rangle\,\mathrm{d}s+\alpha\int_{t_0}^{t_1}
	(\boldsymbol{y}(s),\boldsymbol{\psi})\,\mathrm{d}s \nonumber\\&\quad+\beta\int_{t_0}^{t_1}\langle\boldsymbol{y}(s)|\boldsymbol{y}(s)|^{r-1},\boldsymbol{\psi}\rangle\,\mathrm{d}s = {(\boldsymbol{y}(t_0),\boldsymbol{\psi})}+\int_{t_0}^{t_1}\langle \boldsymbol{g}(s),\boldsymbol{\psi}\rangle\,\mathrm{d}s,
\end{align}
for all $t_0\in[0,T)$, including zero, and all $t_1\in(t_0, T)$, and $\boldsymbol{\psi}\in\mathbb{C}_{0,\sigma}^{\infty}(\Omega)$. Moreover, if $\mathbb{C}_{0,\sigma}^{\infty}(\Omega)$ is dense in
$\mathbb{V}\cap \mathbb{L}_{\sigma}^{r+1}$, the above equality extends to
every $\boldsymbol{\psi}\in \mathbb{V}\cap \mathbb{L}_{\sigma}^{r+1}$.
\end{remark}

	\subsection{Some basic results on Bogovski\u{\i} operator}
	
	The following auxiliary results play an important role in our analysis and are included here for the reader's convenience.

	\begin{lemma}[Poincar\'e-Steklov inequality, {\cite[Corollary 9.19]{Brezis-2011}, \cite[Lemma 3.27]{Ern-2021}}]\label{lem:poincare}
		Let $\mathfrak{D}\subset\mathbb{R}^d$ be a bounded Lipschitz domain and let
		\(1\le q<\infty\). Then there exists a constant
		$C=C(\mathfrak{D},q)>0$ such that
		\begin{align}\label{eqn-poin}
			\|\boldsymbol{v}\|_{\mathbb{L}^q(\mathfrak{D})}
			\leq
			C \operatorname{diam}(\mathfrak{D})\,
			\|\nabla \boldsymbol{v}\|_{\mathbb{L}^q(\mathfrak{D})}
		\end{align}
		for every $\boldsymbol{v}\in \mathbb{W}_0^{1,q}(\mathfrak{D}),$ where $\operatorname{diam}(\mathfrak{D})$ denotes the diameter of $\mathfrak{D}$, defined by
		$
		\operatorname{diam}(\mathfrak{D}):=
		\sup\{|x-y|:\ x,y\in \mathfrak{D}\}.
		$
	\end{lemma}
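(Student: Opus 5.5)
The proof reduces, by density, to smooth compactly supported fields, then combines a one-dimensional estimate along lines with an integration in the transverse variables. Since $\mathrm{C}_0^\infty(\mathfrak{D})$ is dense in $\W_0^{1,q}(\mathfrak{D})$ by definition, and both sides of \eqref{eqn-poin} are continuous with respect to the $\W^{1,q}$-norm, it suffices to prove the inequality for $\boldsymbol{v}\in\mathbb{C}_0^\infty(\mathfrak{D})$. Moreover, we may argue componentwise: once the scalar estimate $\|v\|_{\mathrm{L}^q}\le C\,\mathrm{diam}(\mathfrak{D})\|\nabla v\|_{\mathrm{L}^q}$ is known, summing over the $d$ components (with equivalence of finite-dimensional norms) gives the vector-valued version with a constant depending only on $d$ and $q$.

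For the scalar estimate we extend $v$ by zero to $\mathbb{R}^d$. Set $\delta=\mathrm{diam}(\mathfrak{D})$. After a translation, $\mathfrak{D}$ lies in the slab $\{x: a<x_1<a+\delta\}$, since the projection of $\mathfrak{D}$ onto the $x_1$-axis has length at most $\delta$. For fixed $x'=(x_2,\dots,x_d)$ we have $v(a,x')=0$, so the fundamental theorem of calculus gives
\[
v(x_1,x')=\int_a^{x_1}\partial_1 v(s,x')\,\d s .
\]
Hölder's inequality then yields
\[
|v(x_1,x')|^q\le \delta^{q-1}\int_a^{a+\delta}|\partial_1 v(s,x')|^q\,\d s
\]
for $x_1\in(a,a+\delta)$. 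Integrating in $x_1$ over an interval of length $\delta$ and then in $x'$, we obtain
\[
\|v\|_{\mathrm{L}^q}^q\le \delta^q\|\partial_1 v\|_{\mathrm{L}^q}^q .
\]
Hence $C=1$ works at the scalar level, up to the vector-summation constant; for $q=1$ the Hölder step is trivial.

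The argument uses neither the Lipschitz regularity nor the compactness of $\mathfrak{D}$; boundedness only enters to make $\delta$ finite. The one point requiring care is the density passage: we must check that the zero extension of $\mathrm{C}_0^\infty$ functions keeps the line integral representation valid and that the limit is taken in the full $\W^{1,q}$-norm. Both are routine, so there is no genuine obstacle. Alternatively, one could cite \cite[Corollary 9.19]{Brezis-2011}, where the constant is seen to scale linearly with the width of $\mathfrak{D}$ in one direction, which is bounded by the diameter.
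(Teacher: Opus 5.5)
Your proof is correct. The paper gives no argument of its own for this lemma: it simply quotes \cite[Corollary 9.19]{Brezis-2011} and \cite[Lemma 3.27]{Ern-2021}. Your density reduction, followed by the fundamental theorem of calculus along lines inside a slab of width $\operatorname{diam}(\mathfrak{D})$, H\"older and Fubini, therefore supplies a self-contained proof of what the paper imports.

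Your route also gives more than the lemma states. The constant is independent of $\mathfrak{D}$, and no regularity of $\partial\mathfrak{D}$ is needed. You can even skip the componentwise step: apply the fundamental theorem of calculus to $\boldsymbol{v}$ itself and use $|\partial_1\boldsymbol{v}|\le|\nabla\boldsymbol{v}|$ pointwise to get $C=1$.

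This uniform version is what gives the factor $\operatorname{diam}(\mathfrak{D})$ real content. With $C=C(\mathfrak{D},q)$ as stated, the diameter could be absorbed into the constant. The lemma as stated therefore cannot be applied uniformly to the annuli $A_n$ in Step~2 of the proof of Theorem~\ref{main-1}. The paper sidesteps this by using the inequality only on the fixed domain $B_2\setminus\overline{B}_1$ inside Lemma~\ref{lem:scaling} and then rescaling. Your version yields $\|\boldsymbol{w}_n\|_{\mathbb{L}^q(A_n)}\le 4n\|\nabla\boldsymbol{w}_n\|_{\mathbb{L}^q(A_n)}$ directly.
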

	
	A proof of the following result is available in \cite[Theorem 1]{Bogovskii-1979}, \cite[Lemma III.3.1]{Galdi-2011} and \cite[Lemma 2.1.1, Chapter II]{SohrNSE}.
	
	\begin{lemma}[Bogovski\u{\i}'s theorem]\label{lem:bogovskii}
		Let $\mathfrak{D}\subset\mathbb{R}^d$ be a bounded Lipschitz domain and let
		$1<q<\infty$. Then there exists a bounded linear operator
		\[
		\mathcal{B}_{\mathfrak{D}}:\mathrm{L}_0^q(\mathfrak{D})\to \mathbb{W}_0^{1,q}(\mathfrak{D})
		\]
		such that
		\[
		\nabla\cdot(\mathcal{B}_{\mathfrak{D}}f)=f
		\ \text{ a.e. in }\ \mathfrak{D},
		\]
		for every $f\in\mathrm{L}_0^q(\mathfrak{D})$. Moreover, there exists a constant
		\(C=C(\mathfrak{D},q)>0\) such that
		\begin{align}\label{eqn-Bogo}
			\|\nabla(\mathcal{B}_{\mathfrak{D}}f)\|_{\mathbb{L}^q(\mathfrak{D})}
			\leq
			C\|f\|_{\mathrm{L}^q(\mathfrak{D})},
		\end{align}
		and consequently, by the Poincar\'e inequality,
		\begin{align*}
		\|\mathcal{B}_{\mathfrak{D}}f\|_{\mathbb{W}^{1,q}(\mathfrak{D})}
		\le
		C\,\|f\|_{\mathrm{L}^q(\mathfrak{D})}.
		\end{align*}
	\end{lemma}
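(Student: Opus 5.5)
The plan is the classical two-stage argument of Bogovski\u{\i}: first treat domains that are star-shaped with respect to a ball, where an explicit integral formula is available, and then pass to a general bounded Lipschitz domain $\mathfrak{D}$ by a finite decomposition into such pieces.

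\emph{Step 1 (star-shaped case).} Assume $\mathfrak{D}$ is star-shaped with respect to every point of a ball $B\Subset\mathfrak{D}$. Fix $\omega\in \mathrm{C}_0^\infty(B)$ with $\int\omega=1$ and define, for $f\in \mathrm{C}_0^\infty(\mathfrak{D})$ with zero mean,
\begin{align*}
\mathcal{B}_{\mathfrak{D}}f(x)=\int_{\mathfrak{D}} f(y)\,\frac{x-y}{|x-y|^d}\int_{|x-y|}^{\infty}\omega\Big(y+s\frac{x-y}{|x-y|}\Big)s^{d-1}\,\mathrm{d}s\,\mathrm{d}y .
\end{align*}
I will do three things in this step.
\begin{enumerate}
\item Star-shapedness gives $\operatorname{supp}\mathcal{B}_{\mathfrak{D}}f\Subset\mathfrak{D}$ and smoothness.
\item Writing the kernel as $\int_0^\infty$ of $\omega$ along rays and differentiating, one obtains $\nabla\cdot\mathcal{B}_{\mathfrak{D}}f=f-\omega\int f=f$. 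The zero-mean condition is used exactly here.
\item The derivative $\partial_j(\mathcal{B}_{\mathfrak{D}}f)_i$ splits into a term $f(x)\int\frac{(x-y)_i(x-y)_j}{|x-y|^2}\omega(y)\,\mathrm{d}y$, which is bounded by $C|f(x)|$, plus a singular integral with kernel $K(x,x-y)$. This kernel is homogeneous of degree $-d$ in the second variable, has zero mean on the sphere, and is smooth and bounded in $x$. The Calder\'on--Zygmund theorem for variable kernels then gives \eqref{eqn-Bogo} on the dense set $\mathrm{C}_0^\infty\cap \mathrm{L}^q_0$.
\end{enumerate}
Extending by continuity yields the bounded operator $\mathrm{L}^q_0(\mathfrak{D})\to \mathbb{W}_0^{1,q}(\mathfrak{D})$, and the relation $\nabla\cdot\mathcal{B}_{\mathfrak{D}}f=f$ persists in the limit.

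\emph{Step 2 (decomposition).} A bounded Lipschitz domain is a finite union $\mathfrak{D}=\bigcup_{i=1}^N\mathfrak{D}_i$, where each $\mathfrak{D}_i$ is star-shaped with respect to a ball. This follows from the local graph representation and compactness of $\overline{\mathfrak{D}}$. The pieces can be ordered so that each $G_k:=\mathfrak{D}_k\cap\bigcup_{i>k}\mathfrak{D}_i$ is nonempty, after relabeling using connectedness. Given $f\in \mathrm{L}^q_0(\mathfrak{D})$, I will decompose $f=\sum_i f_i$ with $\operatorname{supp}f_i\subset\mathfrak{D}_i$, $\int f_i=0$ and $\|f_i\|_{\mathrm{L}^q}\le C\|f\|_{\mathrm{L}^q}$. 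The construction is inductive:
\begin{enumerate}
\item Using a partition of unity $\{\chi_i\}$ subordinate to the cover, start from $f\chi_1$.
\item Correct its mean by subtracting $(\int f\chi_1)\,|G_1|^{-1}\mathbf{1}_{G_1}$, and carry that mass to the remaining pieces.
\item Iterate; the total zero mean guarantees that the last piece automatically has mean zero.
\end{enumerate}
Then $\mathcal{B}_{\mathfrak{D}}f:=\sum_i\mathcal{B}_{\mathfrak{D}_i}f_i$, each term extended by zero. This is linear, lies in $\mathbb{W}_0^{1,q}(\mathfrak{D})$, has divergence $f$, and satisfies \eqref{eqn-Bogo}.

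\emph{Step 3.} The full $\mathbb{W}^{1,q}$ bound follows from Lemma~\ref{lem:poincare} applied to $\mathcal{B}_{\mathfrak{D}}f\in\mathbb{W}_0^{1,q}(\mathfrak{D})$.

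\emph{Main obstacle.} I expect the hardest step to be the gradient estimate in Step 1. It requires rewriting the derivative of the Bogovski\u{\i} kernel in Calder\'on--Zygmund form and verifying both the cancellation condition and the $x$-uniform regularity needed for variable-kernel singular integrals. I would follow the computation in \cite[Lemma III.3.1]{Galdi-2011}.
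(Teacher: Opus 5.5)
Your proposal is correct and follows the same argument the paper relies on. The paper gives no proof of its own; it defers to \cite{Bogovskii-1979}, \cite[Lemma III.3.1]{Galdi-2011} and \cite[Lemma 2.1.1, Chapter II]{SohrNSE}, which use exactly your steps: Bogovski\u{\i}'s explicit formula and Calder\'on--Zygmund estimates on domains star-shaped with respect to a ball, then a finite decomposition of the Lipschitz domain with a mean-zero splitting $f=\sum_i f_i$.

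One detail to add when you write out Step 1: besides the delta-type term and the principal singular integral, $\partial_j(\mathcal{B}_{\mathfrak{D}}f)_i$ also contains weakly singular terms with kernels bounded by $C|x-y|^{1-d}$. These come from the less singular parts of the Bogovski\u{\i} kernel and from its $x$-dependence through $\omega$, and they are handled by Young's inequality on the bounded domain.
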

	
	\begin{remark}
		It is worth mentioning that the existence part of Lemma~\ref{lem:bogovskii}, namely the estimate prior to the application of the Poincar\'e inequality, remains valid for arbitrary domains \(\Omega\subseteq\mathbb{R}^d\); see, for instance, \cite[Exercise~III.3.1]{Galdi-2011} or \cite[Theorem~8.12-1]{Ciarlet-2025}. However, the constant appearing in \eqref{eqn-Bogo} generally depends on the geometry of the domain \(\Omega\).
	\end{remark}

	\begin{lemma}[Dilation of the Bogovski\u{\i}\ operator]\label{lem:scaling}
		Let $\mathfrak{D}\subset\R^d$ be a fixed bounded Lipschitz domain and let $\mathcal B_{\mathfrak{D}}$ be
		its Bogovski\u{\i}\ operator (Lemma~\ref{lem:bogovskii}). For $n>0$ set
		$\mathfrak{D}_n:=n\mathfrak{D}=\{nx:x\in\mathfrak{D}\}$. For $f\in \mathrm{L}^q_0(\mathfrak{D}_n)$, we define 
		\[
		g(y):=f(ny), \ y\in \mathfrak{D}  \ \text{ and } \ 
		(\mathcal B_{\mathfrak{D}_n}f)(x):=n\,(\mathcal B_\mathfrak{D} g)(x/n),\ \ x\in \mathfrak{D}_n.
		\]
		Then $\mathcal B_{\mathfrak{D}_n}f\in\mathbb{W}_0^{1,q}(\mathfrak{D}_n)^d$, $\dive(\mathcal B_{\mathfrak{D}_n}f)=f$, and
		there is a constant $C=C(\mathfrak{D},q)$, \emph{independent of $n$}, such that
		\begin{align}\label{eqn-bogo}
		\|\nabla(\mathcal B_{\mathfrak{D}_n}f)\|_{\mathrm{L}^q(\mathfrak{D}_n)}\le C\|f\|_{\mathrm{L}^q(\mathfrak{D}_n)}
		\ \text{ and }\ 
		\|\mathcal B_{\mathfrak{D}_n}f\|_{\mathrm{L}^q(\mathfrak{D}_n)}\le Cn\|f\|_{\mathrm{L}^q(\mathfrak{D}_n)} .
		\end{align}
	\end{lemma}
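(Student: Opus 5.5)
The argument is a change of variables: pull $f$ back to the reference domain $\mathfrak{D}$, apply Lemma~\ref{lem:bogovskii} there, and track how each norm rescales. Three things need checking in order: that $g$ is an admissible datum on $\mathfrak{D}$, that the rescaled field has the right divergence and vanishing trace, and the two estimates in \eqref{eqn-bogo}.

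\emph{Admissibility of $g$.} For $f\in\mathrm{L}^q_0(\mathfrak{D}_n)$, the substitution $x=ny$, $\d x=n^d\,\d y$ gives
\[
\int_{\mathfrak{D}}g(y)\,\d y=n^{-d}\int_{\mathfrak{D}_n}f(x)\,\d x=0,
\qquad
\|g\|_{\mathrm{L}^q(\mathfrak{D})}=n^{-d/q}\|f\|_{\mathrm{L}^q(\mathfrak{D}_n)} .
\]
Hence $g\in\mathrm{L}^q_0(\mathfrak{D})$, and $\boldsymbol{w}:=\mathcal{B}_{\mathfrak{D}}g\in\mathbb{W}^{1,q}_0(\mathfrak{D})$ is well defined by Lemma~\ref{lem:bogovskii}.

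\emph{Divergence and trace.} Set $\boldsymbol{v}(x):=n\boldsymbol{w}(x/n)$. By the chain rule,
\[
\nabla\boldsymbol{v}(x)=(\nabla\boldsymbol{w})(x/n).
\]
Taking the trace gives $\dive\boldsymbol{v}(x)=(\dive\boldsymbol{w})(x/n)=g(x/n)=f(x)$ for a.e. $x\in\mathfrak{D}_n$. For the boundary condition, choose $\boldsymbol{w}_k\in\mathbb{C}_0^\infty(\mathfrak{D})$ with $\boldsymbol{w}_k\to\boldsymbol{w}$ in $\mathbb{W}^{1,q}(\mathfrak{D})$. The dilated fields $n\boldsymbol{w}_k(\cdot/n)$ lie in $\mathbb{C}_0^\infty(\mathfrak{D}_n)$. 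The scaling identities in the next step show they converge to $\boldsymbol{v}$ in $\mathbb{W}^{1,q}(\mathfrak{D}_n)$ for fixed $n$, so $\boldsymbol{v}\in\mathbb{W}^{1,q}_0(\mathfrak{D}_n)$.

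\emph{Estimates.} The same substitution gives
\[
\|\nabla\boldsymbol{v}\|_{\mathrm{L}^q(\mathfrak{D}_n)}=n^{d/q}\|\nabla\boldsymbol{w}\|_{\mathrm{L}^q(\mathfrak{D})},
\qquad
\|\boldsymbol{v}\|_{\mathrm{L}^q(\mathfrak{D}_n)}=n^{1+d/q}\|\boldsymbol{w}\|_{\mathrm{L}^q(\mathfrak{D})} .
\]
Applying \eqref{eqn-Bogo} on $\mathfrak{D}$ and the identity for $\|g\|$ above,
\[
\|\nabla\boldsymbol{v}\|_{\mathrm{L}^q(\mathfrak{D}_n)}\le C n^{d/q}\|g\|_{\mathrm{L}^q(\mathfrak{D})}=C\|f\|_{\mathrm{L}^q(\mathfrak{D}_n)} .
\]
For the $\mathrm{L}^q$ bound, use the full $\mathbb{W}^{1,q}$ estimate of Lemma~\ref{lem:bogovskii} on $\mathfrak{D}$, that is $\|\boldsymbol{w}\|_{\mathrm{L}^q(\mathfrak{D})}\le C\|g\|_{\mathrm{L}^q(\mathfrak{D})}$. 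This yields
\[
\|\boldsymbol{v}\|_{\mathrm{L}^q(\mathfrak{D}_n)}\le Cn^{1+d/q}n^{-d/q}\|f\|_{\mathrm{L}^q(\mathfrak{D}_n)}=Cn\|f\|_{\mathrm{L}^q(\mathfrak{D}_n)} .
\]
Alternatively, the same bound follows from Lemma~\ref{lem:poincare} on $\mathfrak{D}_n$, since $\operatorname{diam}(\mathfrak{D}_n)=n\operatorname{diam}(\mathfrak{D})$. That route requires the Poincaré constant $C(\cdot,q)$ in \eqref{eqn-poin} to be dilation invariant, which itself follows by the same scaling argument.

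\emph{Where care is needed.} The only delicate point is that every constant must depend on $\mathfrak{D}$ and $q$ alone, never on $n$. This holds because the Bogovski\u{\i} operator, and hence its bounds, is only ever invoked on the fixed domain $\mathfrak{D}$. The powers $n^{\pm d/q}$ cancel exactly, and the single leftover factor $n$ comes from the prefactor in the definition of $\mathcal{B}_{\mathfrak{D}_n}$.
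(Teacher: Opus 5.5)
Your proposal is correct and follows the paper's proof essentially step for step. You pull back to $\mathfrak{D}$, check $g\in\mathrm{L}^q_0(\mathfrak{D})$, apply the Bogovski\u{\i} bound and the Poincar\'e-based $\mathbb{W}^{1,q}$ bound only on the fixed domain, and track the factors $n^{\pm d/q}$ and the prefactor $n$ under $x=ny$. Your density argument for $\boldsymbol{v}\in\mathbb{W}^{1,q}_0(\mathfrak{D}_n)$ is a slightly more careful version of the paper's one-line remark that the dilated field inherits zero trace, and it matches the paper's definition of $\mathbb{W}^{1,q}_0$ as a closure of $\mathbb{C}_0^\infty$.
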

	
	\begin{proof}
		First, we verify that $g\in\mathrm{L}_0^q(\mathfrak{D})$. Indeed, by the change of variables $x=ny$,
		\begin{align*}
		\int_{\mathfrak{D}} g(y)\,\d y=\int_{\mathfrak{D}} f(ny)\,\d y
		=n^{-d}\int_{\mathfrak{D}_n}f(x)\,\d x=0,
		\end{align*}
		since $f\in\mathrm{L}_0^q(\mathfrak{D}_n)$. Hence, $g$ has zero mean over $\mathfrak{D}$, and therefore
		$g\in\mathrm{L}_0^q(\mathfrak{D})$. 
		Let us denote  $\boldsymbol{v}:=\mathcal B_{\mathfrak{D}} g\in \mathbb{W}_0^{1,q}(\mathfrak{D})$ and $\boldsymbol{w}(x):=n\boldsymbol{v}(x/n)$. By the
		chain rule, for $x=ny\in\mathfrak{D}_n$, we have 
		\begin{align*}
		\partial_{x_i}w_j(x)&=n\cdot \partial_{y_i}v_j(y)\cdot\frac1n
		=\partial_{y_i}v_j(y)
		\\  \text{ and }
		\dive_x\boldsymbol{w}(x)&=\dive_y \boldsymbol{v}(y)=g(y)=f(ny)=f(x).
		\end{align*}
		So $\dive\boldsymbol{w}=f$ on $\mathfrak{D}_n$, and $\boldsymbol{w}$ has zero trace on $\partial\mathfrak{D}_n$, since $\boldsymbol{v}$
		has zero trace on $\partial\mathfrak{D}$, that is, $\boldsymbol{w}\in \mathbb{W}_0^{1,q}(\mathfrak{D}_n)$.
		
		For the estimate of the gradient, we perform the change of variables
		$x=ny$, with Jacobian $\d x=n^d\,\d y$, to obtain
		\begin{align*}
		\|\nabla\boldsymbol{w}\|_{\mathbb{L}^q(\mathfrak{D}_n)}^q=\int_{\mathfrak{D}_n}|\nabla_x\boldsymbol{w}(x)|^q\d x
		=\int_\mathfrak{D} |\nabla_y \boldsymbol{v}(y)|^q n^d\d y = n^d\|\nabla\boldsymbol{v}\|_{\mathbb{L}^q(\mathfrak{D})}^q,
		\end{align*}
		so that $\|\nabla\boldsymbol{w}\|_{\mathrm{L}^q(\mathfrak{D}_n)}=n^{d/q}\|\nabla \boldsymbol{v}\|_{\mathbb{L}^q(\mathfrak{D})}$. Similarly, we have 
		\[
		\|f\|_{\mathrm{L}^q(\mathfrak{D}_n)}^q=\int_{\mathfrak{D}_n}|f(x)|^q dx=\int_D|g(y)|^q n^d dy
		= n^d\|g\|_{\mathrm{L}^q(\mathfrak{D})}^q,
		\]
		so that $\|f\|_{\mathrm{L}^q(D_n)}=n^{d/q}\|g\|_{\mathrm{L}^q(\mathfrak{D})}$. Combining with the Bogovski\u{\i}\
		estimate \eqref{eqn-Bogo}, that is, $\|\nabla \boldsymbol{v}\|_{\mathbb{L}^q(\mathfrak{D})}\le C(\mathfrak{D},q)\|g\|_{\mathrm{L}^q(\mathfrak{D})}$, we arrive at 
		\begin{align*}
		\|\nabla\boldsymbol{w}\|_{\mathbb{L}^q(D_n)}=n^{d/q}\|\nabla \boldsymbol{v}\|_{\mathbb{L}^q(\mathfrak{D})}
		\leq C(\mathfrak{D},q)\,n^{d/q}\|g\|_{\mathrm{L}^q(\mathfrak{D})}=C(\mathfrak{D},q)\|f\|_{\mathrm{L}^q(\mathfrak{D}_n)},
		\end{align*}
		which is the first estimate, with the same constant as on the fixed domain
		$\mathfrak{D}$, hence independent of $n$.
		
		For the second estimate, we compute similarly
		\begin{align*}
		\|\boldsymbol{w}\|_{\mathbb{L}^q(\mathfrak{D}_n)}^q=\int_{\mathfrak{D}_n}|\boldsymbol{w}(x)|^q\d x
		=\int_{\mathfrak{D}} |n\boldsymbol{v}(y)|^q n^d\d y = n^{q+d}\|\boldsymbol{v}\|_{\mathbb{L}^q(\mathfrak{D})}^q,
		\end{align*}
		so that $\|\boldsymbol{w}\|_{\mathbb{L}^q(\mathfrak{D}_n)}=n^{1+d/q}\|\boldsymbol{v}\|_{\mathbb{L}^q(\mathfrak{D})}$. Applying Lemma \ref{lem:poincare} on the fixed domain \(\mathfrak{D}\) (cf. \eqref{eqn-poin}), we obtain
		\[
		\|\boldsymbol{v}\|_{\mathbb{L}^q(\mathfrak{D})}
		\le
		C_d \operatorname{diam}(\mathfrak{D})
		\|\nabla\boldsymbol{v}\|_{\mathbb{L}^q(\mathfrak{D})}
		\le
		C(\mathfrak{D},q)\,\|g\|_{\mathrm{L}^q(\mathfrak{D})}.
		\]
		Using the scaling relation \(\boldsymbol{w}(x)=n\boldsymbol{v}(x/n)\), it follows that
		\begin{align*}
		\|\boldsymbol{w}\|_{\mathbb{L}^q(\mathfrak{D}_n)}
		\le
		C(\mathfrak{D},q)n^{1+d/q}\|g\|_{\mathrm{L}^q(\mathfrak{D})}
		=
		C(\mathfrak{D},q)n\|f\|_{\mathrm{L}^q(\mathfrak{D}_n)},
		\end{align*}
		which completes the proof.
	\end{proof}

	\section{Simultaneous approximation and energy equality for CBF equations in $\mathbb{R}^d$}\setcounter{equation}{0}\label{sec-3}

In this section, we first establish a simultaneous approximation result
for divergence-free vector fields in Sobolev and Lebesgue spaces on
$\mathbb{R}^d$. We then apply this approximation to prove that every 
Leray-Hopf weak solution of the two- and three-dimensional critical
and supercritical CBF equations \eqref{abstract-CBF} satisfy the energy equality.
	
	\subsection{Simultaneous approximation on $\mathbb{R}^d$}
	Let us first provide the simultaneous approximation result on $\mathbb{R}^d$.

	\begin{theorem}\label{main-1}
		Let $\Omega=\mathbb{R}^d$ and $\boldsymbol{y}\in\mathbb{V}\cap\mathbb{L}^p_{\sigma}$. Then, there exists a sequence $\{\boldsymbol{y}_n\}_{n\geq1}$ in $\mathbb{C}_{0,\sigma}^{\infty}(\Omega)$ such that 
		\begin{align}\label{eqn-conv}
			\|\boldsymbol{y}_n-\boldsymbol{y}\|_{\mathbb{V}}+\|\boldsymbol{y}_n-\boldsymbol{y}\|_{\mathbb{L}^p_{\sigma}}\to 0\ \text{ as }\ n\to\infty. 
		\end{align}
		Equivalently, $\mathbb{C}_{0,\sigma}^{\infty}(\Omega)$ is dense in $\mathbb{V}\cap\mathbb{L}^p_{\sigma}$ for
		the norm $\|\cdot\|_{\mathbb{V}}+\|\cdot\|_{\mathbb{L}^p_{\sigma}}$.
	\end{theorem}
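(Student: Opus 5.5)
The plan is to combine a cutoff, a Bogovski\u{\i} correction on annuli, and mollification. Fix $\boldsymbol{y}\in\mathbb{V}\cap\mathbb{L}^p_\sigma$, so that $\nabla\cdot\boldsymbol{y}=0$ in $\mathcal{D}'(\mathbb{R}^d)$ and $\boldsymbol{y}\in\mathbb{H}^1(\mathbb{R}^d)\cap\mathbb{L}^p(\mathbb{R}^d)$. Choose $\chi\in\mathrm{C}_0^\infty(\mathbb{R}^d)$ with $0\le\chi\le1$, $\chi=1$ on $B_1$ and $\supp\chi\subset B_2$, and set $\chi_n(x)=\chi(x/n)$. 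The truncation $\chi_n\boldsymbol{y}$ is compactly supported, but its divergence is $\nabla\chi_n\cdot\boldsymbol{y}$, which is supported in the annulus $\mathfrak{D}_n=n\mathfrak{D}$, where $\mathfrak{D}=B_2\setminus\overline{B_1}$ is a fixed bounded Lipschitz domain.

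This divergence has zero mean over $\mathfrak{D}_n$. Indeed, $\int\nabla\chi_n\cdot\boldsymbol{y}\,\d x=-\int\chi_n\nabla\cdot\boldsymbol{y}\,\d x=0$, which is justified by approximating $\boldsymbol{y}$ in $\mathbb{H}^1$ by elements of $\mathbb{C}^\infty_{0,\sigma}$. We may therefore define
\[
\boldsymbol{w}_n:=\chi_n\boldsymbol{y}-\mathcal{B}_{\mathfrak{D}_n}(\nabla\chi_n\cdot\boldsymbol{y}),
\]
extending the Bogovski\u{\i} term by zero outside $\mathfrak{D}_n$. This is legitimate because it lies in $\mathbb{W}^{1,q}_0(\mathfrak{D}_n)$. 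The field $\boldsymbol{w}_n$ is divergence free and compactly supported in $B_{2n}$.

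The key estimates come from Lemma \ref{lem:scaling}, whose constants are independent of $n$, applied with $q=2$ and with $q=p$ simultaneously. Since the Bogovski\u{\i} operator on a fixed domain can be chosen independently of $q$ (the standard integral formula), the same correction works in both spaces. Write $f_n=\nabla\chi_n\cdot\boldsymbol{y}$. Then
\[
\|f_n\|_{\mathrm{L}^q}\le \frac{C}{n}\|\boldsymbol{y}\|_{\mathbb{L}^q(|x|>n)},
\]
and \eqref{eqn-bogo} gives
\[
\|\nabla\mathcal{B}_{\mathfrak{D}_n}f_n\|_{\mathrm{L}^q}\le \frac{C}{n}\|\boldsymbol{y}\|_{\mathbb{L}^q(|x|>n)}
\quad\text{and}\quad
\|\mathcal{B}_{\mathfrak{D}_n}f_n\|_{\mathrm{L}^q}\le C\|\boldsymbol{y}\|_{\mathbb{L}^q(|x|>n)}.
\]
Both bounds tend to $0$, for $q=2$ and for $q=p$, by dominated convergence.

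The factor $n$ lost in the $\mathrm{L}^q$ estimate of \eqref{eqn-bogo} is exactly compensated by $|\nabla\chi_n|\lesssim1/n$. I expect this scaling bookkeeping, together with the $q$-independence of the operator, to be the crux of the argument.

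Next, $\chi_n\boldsymbol{y}\to\boldsymbol{y}$ in $\mathbb{H}^1\cap\mathbb{L}^p$. The $\mathrm{L}^2$ and $\mathrm{L}^p$ convergence follows by dominated convergence. The gradient is
\[
\nabla(\chi_n\boldsymbol{y})=\chi_n\nabla\boldsymbol{y}+\nabla\chi_n\otimes\boldsymbol{y},
\]
and the last term is $O(n^{-1}\|\boldsymbol{y}\|_{\mathbb{L}^2})$. Hence $\boldsymbol{w}_n\to\boldsymbol{y}$ in $\mathbb{V}$ and in $\mathbb{L}^p$.

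Finally, mollify: $\boldsymbol{y}_{n,\varepsilon}=\rho_\varepsilon*\boldsymbol{w}_n$. This field is smooth, compactly supported, and still divergence free, because convolution commutes with $\nabla\cdot$ on $\mathbb{R}^d$; this is where the whole-space setting is used, since no boundary is present. Moreover, $\boldsymbol{y}_{n,\varepsilon}\to\boldsymbol{w}_n$ in $\mathbb{H}^1$ and in $\mathbb{L}^p$ as $\varepsilon\to0$. A diagonal choice $\varepsilon_n$ then gives $\boldsymbol{y}_n\in\mathbb{C}^\infty_{0,\sigma}(\mathbb{R}^d)$ satisfying \eqref{eqn-conv}.
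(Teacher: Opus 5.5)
Your proposal is correct and follows essentially the same route as the paper: you truncate with $\chi_n$, remove the divergence $\nabla\chi_n\cdot\boldsymbol{y}$ on the dilated annulus via Lemma~\ref{lem:scaling}, and then mollify with a diagonal choice of $\varepsilon_n$. The Bogovski\u{\i} step uses a single $q$-independent operator for both $q=2$ and $q=p$, and the factor $n$ in its $\mathrm{L}^q$ bound is cancelled by $|\nabla\chi_n|\lesssim 1/n$, exactly as in the paper. The only cosmetic difference is that you justify the zero-mean condition by approximating in $\mathbb{H}^1$, whereas the paper tests the distributional identity $\nabla\cdot\boldsymbol{y}=0$ directly against $\chi_n$.
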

	\begin{proof}  We divide the proof into the following steps:
		\vskip 0.1 cm
		\noindent \textbf{Step 1:} \emph{A smooth approximation in $\mathbb{H}^1(\R^d)\cap \mathbb{L}^p(\R^d)$.} Let us define $\eta:\R\to\R$ by
		\begin{align*}
		\eta(t):=\begin{cases} e^{-1/t}, & t>0,\\[2pt] 0, & t\le 0. \end{cases}
		\end{align*}
		It is a standard fact that $\eta\in \mathrm{C}^\infty(\R)$. Define
		\begin{align*}
		\psi(t):=\frac{\eta(2-t)}{\eta(2-t)+\eta(t-1)}, \ \ t\in\R .
		\end{align*}
		Then $\psi\in\mathrm{C}^\infty(\R)$ and satisfies
		\begin{align*}
		0\leq\psi\leq1, \ 
		\psi(t)=\begin{cases} 1, & t\leq1,\\[2pt] 0, & t\geq2. \end{cases}
		\end{align*}
Now define the radial cut-off function
		\begin{align*}
		\chi(x):=\psi(|x|), \  x\in\R^d .
		\end{align*}
	For $n>0$, let $B_n:=B_n(0)
	=\left\{x\in\mathbb{R}^d:\lvert x\rvert<n\right\}$ be an open ball in $\mathbb{R}^d$.	Although the function $x\mapsto|x|$ is not smooth at the origin, this causes no difficulty. Indeed, since $\psi\equiv1$ on $(-\infty,1]$, $\chi$ is identically $1$ on the open ball $B_1$. Thus, $\chi$ is constant, and hence smooth, in a neighborhood of the origin. On $\R^d\setminus\{0\}$, both $x\mapsto|x|$ and $\psi$ are smooth. Consequently, $\chi\in \mathrm{C}^\infty(\R^d)$, and  
		\begin{align*}
		0\le\chi\le1, \  \chi\equiv1 \text{ on } B_1, \  \chi\equiv0 \text{ outside } \overline{B}_2.
		\end{align*}
		For each $n\in\mathbb{N},$ set $\chi_n(x):=\psi(|x|/n)$, $x\in\R^d$. Then
		\begin{align*}
		\chi_n\in\mathrm{C}_0^\infty(B_{2n}), \ 0\le\chi_n\le1,\ \chi_n\equiv1 \ \text{ on } \ B_n.
		\end{align*} 
		and
		\begin{align*}
		|\nabla\chi_n(x)| \le \frac{C}{n}, \  \text{ where }\ C=\|\psi'\|_{\mathrm{L}^\infty}<\infty.
		\end{align*}
		For $\mathbb{H}^1(\R^d)\cap \mathbb{L}^p(\R^d)$, define
		\begin{align*}
		\boldsymbol{v}_n:=\chi_n \boldsymbol{y}.
		\end{align*}
		Since $\boldsymbol{y}\in \mathbb{H}^1(\R^d)$ and $\chi_n\in \mathrm{C}_0^\infty(\mathbb{R}^d)$, the product rule gives
		$\boldsymbol{v}_n\in \mathbb{H}^1(\R^d)$, with $\supp(\boldsymbol{v}_n)\subset B_{2n}$. Since $\boldsymbol{y}\in \mathbb{L}^p(\R^d)$, it is immediate that $\boldsymbol{v}_n\in  \mathbb{L}^p(\R^d)$.  
		
		Let us now show that $\boldsymbol{v}_n\to \boldsymbol{y}$ in $\mathbb{H}^1(\R^d)\cap \mathbb{L}^p(\R^d)$ as $n\to\infty$.  Since $\boldsymbol{y} \in \mathbb{L}^p(\R^d)$, we have
		\begin{align*}
		\|\boldsymbol{y}-\boldsymbol{v}_n\|_{\mathbb{L}^p(\mathbb{R}^d)}^p
		=\int_{\R^d}|1-\chi_n|^p|\boldsymbol{y}(x)|^p\d x .
		\end{align*}
		For every fixed $x\in\R^d$, we have $1-\chi_n(x)\to 0$ as $n\to\infty$ pointwise, because
		$\chi_n\equiv1$ on $B_n\uparrow\R^d$). Moreover, 
		\begin{align*}
	0\le |1-\chi_n|^p|\boldsymbol{y}|^p\le |\boldsymbol{y}|^p\in
	\mathbb{L}^1(\R^d) \ \text{ and }
	|\boldsymbol{y}|^p\in\mathrm{L}^1(\R^d).
		\end{align*} 
		 Therefore, the Lebesgue dominated convergence theorem yields $\|\boldsymbol{y}-\boldsymbol{v}_n\|_{\mathbb{L}^p(\mathbb{R}^d)}\to0$ as $n\to\infty$.
		Alternatively, since $1-\chi_n\equiv0$ on $B_n$, we directly obtain 
		\begin{align*}
		\|\boldsymbol{y}-\boldsymbol{v}_n\|_{\mathbb{L}^p(\mathbb{R}^d)}^p \le \int_{\R^d\setminus B_n}|\boldsymbol{y}(x)|^p\d x \to  0,
		\end{align*}
		as $n\to\infty$, being the tail of a convergent integral. By the product rule, we obtain 
		\begin{align*}
		\nabla(\boldsymbol{v}_n-\boldsymbol{y})=(\chi_n-1)\nabla \boldsymbol{y} + \boldsymbol{y} \nabla\chi_n .
		\end{align*}
		The first term satisfies $|(\chi_n-1)\nabla \boldsymbol{y}|\le|\nabla \boldsymbol{y}|\in \mathbb{L}^2(\R^d)$ and converges to $0$ pointwise, so by the Lebesgue dominated convergence theorem,
		$\|(\chi_n-1)\nabla \boldsymbol{y}\|_{\mathbb{L}^2(\mathbb{R}^d)}\to 0$. For the second term, since $\boldsymbol{y}\in
		\mathbb{H}^1(\R^d)=\mathbb{W}^{1,2}(\R^d)$, by definition of this space, we have 
		$\boldsymbol{y}\in \mathbb{L}^2(\R^d)$ and 
		\begin{align}\label{estconvann}
		\|\boldsymbol{y}\nabla\chi_n\|_{\mathbb{L}^2(\mathbb{R}^d)}\le \|\nabla\chi_n\|_{\mathbb{L}^\infty(\mathbb{R}^d)}\|\boldsymbol{y}\|_{\mathbb{L}^2(\mathbb{R}^d)}
		\le \frac{C}{n}\|\boldsymbol{y}\|_{\mathbb{L}^2(\mathbb{R}^d)}\longrightarrow 0.
		\end{align}
		Also $\|\boldsymbol{v}_n-\boldsymbol{y}\|_{\mathbb{L}^2(\mathbb{R}^d)}\to0$ by the same dominated-convergence argument as
		for $\mathbb{L}^p$ (with $p$ replaced by $2$). Combining the three limits gives
		\begin{align}\label{eqn-conv-3}
			\boldsymbol{v}_n\to \boldsymbol{y}\ \text{ in }\ \mathbb{H}^1(\R^d)\cap \mathbb{L}^p(\R^d)\ \text{ as }\ n\to\infty.
		\end{align}
		
		Since $\nabla\chi_n$ is supported in $A_n:=B_{2n}\setminus B_n$, the estimate
		in \eqref{estconvann} can be sharpened to
		\begin{align*}
		\|\boldsymbol{y}\nabla\chi_n\|_{\mathbb{L}^q(\mathbb{R}^d)}\le \frac{C}{n}\|\boldsymbol{y}\|_{\mathbb{L}^q(A_n)}, \ 
		\text{ for any } \ q\in(1,\infty)\ \text{ for which }\ \boldsymbol{y}\in \mathbb{L}^q(\mathbb{R}^d),
		\end{align*}
		and, because $\boldsymbol{y}\in\mathbb{L}^q(A_n)$ is the tail of a fixed $\mathbb{L}^q$ function, this tends
		to $0$ even  {faster} than $1/n$ would suggest, by dominated convergence.
		We present this localized version since it is needed to control the divergence on the annulus.
		
		\vskip 0.1 cm
		\noindent \textbf{Step 2:} \emph{Correction of the divergence via the Bogovski\u{\i}\ operator.} 	Since $\dive \boldsymbol{y}=0$, we find 
		\begin{align*}
		\dive(\boldsymbol{v}_n)=\dive(\chi_n \boldsymbol{y})=\nabla\chi_n\cdot \boldsymbol{y}+\chi_n\,\dive \boldsymbol{y}
		=\boldsymbol{y}\cdot\nabla\chi_n=:g_n .
		\end{align*}
		Clearly $\supp(g_n)\subset A_n$, since $\nabla\chi_n$ vanishes outside $A_n$. It follows that $g_n\in\mathrm{L}_0^q(A_n)$ for every $q\in(1,\infty)$ such that
		\(\boldsymbol{y}\in\mathbb{L}^q(\mathbb{R}^d)\), and in particular for
		\(q=2\) and \(q=p\). Moreover,
		\begin{align}\label{eqn-conv-1}
			\|g_n\|_{\mathrm{L}^q(A_n)}
			=
			\big\|\boldsymbol{y}\cdot\nabla\chi_n\big\|_{\mathrm{L}^q(A_n)}
			\leq
			\|\nabla\chi_n\|_{\mathrm{L}^\infty(\mathbb{R}^d)}
			\|\boldsymbol{y}\|_{\mathbb{L}^q(A_n)}
			\leq
			\frac{C}{n}\,
			\|\boldsymbol{y}\|_{\mathbb{L}^q(A_n)}
			\to  0
			\ \text{ as } \ n\to\infty.
		\end{align}
		It remains to verify that $g_n$ has zero mean over $A_n$. Since
		$\boldsymbol{y}\in \mathbb{H}^{1}(\mathbb{R}^{d})$ and 
		$\chi_n\in\mathrm{C}_0^\infty(\mathbb{R}^d)$, integration by parts yields
		\begin{align*}
		\int_{A_n} g_n\d x=
		\int_{\mathbb{R}^{d}}\boldsymbol{y}\cdot\nabla\chi_n\d x=
		-\int_{\mathbb{R}^{d}}(\nabla\cdot\boldsymbol{y})\,\chi_n\d x=0,
		\end{align*}
		where we have used the fact that \(\nabla\cdot\boldsymbol{y}=0\) in the distributional sense. Consequently,
		$\int_{A_n} g_n\d x=0,	$ and hence $g_n\in\mathrm{L}_0^q(A_n)$.

		Applying Lemma~\ref{lem:scaling} with $\mathfrak{D}=B_2\setminus \overline{B}_1$, we get $\mathfrak{D}_n=A_n$ and a family of operators
		\begin{align*}
		\mathcal B_n:=\mathcal B_{A_n}:\mathrm{L}^q_0(A_n)\to \mathbb{W}_0^{1,q}(A_n),
		\ 
		\dive(\mathcal B_n f)=f,\ 
		\|\nabla(\mathcal B_n f)\|_{\mathbb{L}^q(A_n)}\le C\|f\|_{\mathrm{L}^q(A_n)},
		\end{align*}
		with $C$ independent of $n$ for each fixed $q$. Let us set 
		\begin{align*}
		\boldsymbol{w}_n:=\mathcal B_n(g_n)\in \mathbb{W}_0^{1,p}(A_n).
		\end{align*}
		Since $g_n\in \mathrm{L}^q_0(A_n)$ simultaneously for $q=2$ and $q=p$, and $\mathcal B_n$ is given by one operator valid for
		every $q$ (last part of Lemma~\ref{lem:bogovskii}), $\boldsymbol{w}_n$ satisfies
		both
		\begin{align*}
		\|\nabla\boldsymbol{w}_n\|_{\mathbb{L}^2(A_n)}\le C\|g_n\|_{\mathrm{L}^2(A_n)}
		\ \text{ and }\ 
		\|\nabla\boldsymbol{w}_n\|_{\mathbb{L}^p(A_n)}\le C\|g_n\|_{\mathrm{L}^p(A_n)}.
		\end{align*}
		Since $p>2$ and the annulus $A_n$ has finite measure, the continuous embedding
		$\W_0^{1,p}(A_n)\hookrightarrow\W_0^{1,2}(A_n)=\mathrm{H}_0^1(A_n)$
		holds. Therefore, combining \eqref{eqn-conv-1}, which yields
		$\|g_n\|_{\mathrm{L}^q(A_n)}\to0$ for $q=2,p$, with the estimates of
		Lemma~\ref{lem:scaling} and the bound
		\(\operatorname{diam}(A_n)\le 4n\) (cf.~Lemma~\ref{lem:poincare}), we deduce  for $q=2,p$ that 
		\[
		\|\boldsymbol{w}_n\|_{\mathbb{L}^q(A_n)} \le C n \|g_n\|_{\mathrm{L}^q(A_n)} \le Cn\cdot
		\frac{C}{n}\|\boldsymbol{y}\|_{\mathbb{L}^q(A_n)} = C\|\boldsymbol{y}\|_{\mathbb{L}^q(A_n)}\to  0,
		\]
		since $\boldsymbol{y}\in \mathbb{L}^q(\R^d)$ (recall $\boldsymbol{y}\in \mathbb{H}^1(\mathbb{R}^d)\cap \mathbb{L}^p(\mathbb{R}^d)\subset \mathbb{L}^2(\mathbb{R}^d)\cap \mathbb{L}^p(\mathbb{R}^d)$). Hence
		extending $\boldsymbol{w}_n$ by zero to $\R^d\setminus A_n$, we have $\boldsymbol{w}_n\in
		\mathbb{H}^1(\R^d)\cap \mathbb{L}^p(\R^d)$, $\supp(\boldsymbol{w}_n)\subset A_n\subset B_{2n}$,
		$\dive\boldsymbol{w}_n=g_n$, and 
		\begin{align}\label{eqn-conv-2}
			\boldsymbol{w}_n\to\boldsymbol{0}\ \text{ in }\ \mathbb{H}^1(\R^d)\cap \mathbb{L}^p(\R^d) \ \text{ as }\ n\to\infty.
		\end{align}

		\vskip 0.1 cm
		\noindent \textbf{Step 3:} \emph{Divergence-free, compactly supported approximation.} 	Let us define
		$
		\boldsymbol{z}_n:=\boldsymbol{v}_n-\boldsymbol{w}_n .
		$
		Then $\supp(\boldsymbol{z}_n)\subset B_{2n}$, since $\boldsymbol{v}_n$ and $\boldsymbol{w}_n$ are supported in
		$B_{2n}$, and
		\[
		\dive \boldsymbol{z}_n=\dive \boldsymbol{v}_n-\dive\boldsymbol{w}_n = g_n-g_n=0.
		\]
		Using \eqref{eqn-conv-3} and \eqref{eqn-conv-2}, we find 
		\[
		\| \boldsymbol{z}_n- \boldsymbol{y}\|_{\mathbb{H}^1(\R^d)}+\| \boldsymbol{z}_n- \boldsymbol{y}\|_{\mathbb{L}^p(\R^d)}
		\le \| \boldsymbol{v}_n- \boldsymbol{y}\|_{\mathbb{H}^1(\R^d)}+\|\boldsymbol{v}_n-\boldsymbol{y}\|_{\mathbb{L}^p(\R^d)}+\|\boldsymbol{w}_n\|_{\mathbb{H}^1(\R^d)}+\|\boldsymbol{w}_n\|_{\mathbb{L}^p(\R^d)}
		\to 0,
		\]
		as $n\to \infty$.	Thus $\boldsymbol{z}_n$ is a compactly supported, divergence-free $\mathbb{H}^1(\R^d)\cap \mathbb{L}^p(\R^d)$	approximation of $\boldsymbol{y}$; it only remains to smooth it out.
		
		\vskip 0.1 cm
		\noindent \textbf{Step 4:} \emph{Mollification and conclusion.} Let $\rho_\varepsilon$ be a standard (Friedrichs) mollifier, $\rho_\varepsilon
		\ge0$, $\supp(\rho_\varepsilon)\subset \overline{B}_\varepsilon$, $\int_{\mathbb{R}^d}\rho_\varepsilon(x)\d x=1$. That is, we define 
		\[
		\rho(x):=\begin{cases} C\exp\!\Big(\dfrac{-1}{1-|x|^2}\Big), & |x|<1,\\[6pt]
			0, & |x|\ge1, \end{cases}
		\]
		where $C>0$ is the unique constant for which $\int_{\R^d}\rho(x)\d x=1$. Note that $\rho\in
		\mathrm{C}_c^\infty(\R^d)$, with
		$\rho\ge0, \  \supp(\rho)\subset\overline{B}_1, \  \int_{\R^d}\rho(x)\d x=1 .$
		For $\varepsilon>0$, we set
		\begin{align*}
		\rho_\varepsilon(x):=\frac{1}{\varepsilon^d} \rho\left(\frac{x}{\varepsilon}\right).
		\end{align*}
		Then $\rho_\varepsilon\in\mathrm{C}_c^\infty(\R^d)$, $\rho_\varepsilon\ge0$,
		$\supp(\rho_\varepsilon)\subset\overline{B}_\varepsilon$, and
		$\int_{\R^d}\rho_\varepsilon\d x=1$. For each fixed $n$, $ \boldsymbol{z}_n\in \mathbb{H}^1(\R^d)\cap \mathbb{L}^p(\R^d)$ is compactly supported, and
		it is a standard property of mollifiers that
		\begin{align*}
		\rho_\varepsilon*  \boldsymbol{z}_n \to  \boldsymbol{z}_n \ \text{ in }\  \mathbb{H}^1(\R^d)\cap
		\mathbb{L}^p(\R^d) \  \text{ as }\ \varepsilon\to0^+,
		\end{align*}
		for $1\leq p<\infty$.	Hence, for each $n,$ we may choose $\varepsilon_n\in(0,\tfrac1n)$ small enough
		such 	that
		\begin{align*}
		\|\rho_{\varepsilon_n}* \boldsymbol{z}_n- \boldsymbol{z}_n\|_{\mathbb{H}^1(\R^d)}
		+\|\rho_{\varepsilon_n}* \boldsymbol{z}_n- \boldsymbol{z}_n\|_{\mathbb{L}^p(\R^d)} < \frac1n .
		\end{align*}
		Let us define 
		\begin{align*}
		\boldsymbol{y}_n:=\rho_{\varepsilon_n}*\boldsymbol{z}_n.
		\end{align*}
		By the properties of mollification and the preceding construction, the sequence
		\(\{\boldsymbol{y}_n\}_{n\geq1}\) satisfies the following:
		\begin{itemize}
			\item \(\boldsymbol{y}_n\in\mathrm{C}^\infty(\mathbb{R}^d)\), and
			$
			\supp(\boldsymbol{y}_n)
			\subset
			\supp(\boldsymbol{z}_n)+B_{\varepsilon_n}
			\subset
			B_{2n+1},
			$
			so that each \(\boldsymbol{y}_n\) has compact support.
			
			\item Since convolution commutes with differentiation,
			$\nabla\cdot\boldsymbol{y}_n=
			\rho_{\varepsilon_n}*\nabla\cdot\boldsymbol{z}_n=0,
			$
			so that
			$\boldsymbol{y}_n\in\mathrm{C}_{0,\sigma}^{\infty}(\mathbb{R}^d)$ for each $n$.
			
			\item Finally, by the triangle inequality,
			\begin{align*}
				&\|\boldsymbol{y}_n-\boldsymbol{y}\|_{\mathbb{H}^1(\mathbb{R}^d)}
				+
				\|\boldsymbol{y}_n-\boldsymbol{y}\|_{\mathbb{L}^p(\mathbb{R}^d)}
				\\
				&\leq
				\|\boldsymbol{y}_n-\boldsymbol{z}_n\|_{\mathbb{H}^1(\mathbb{R}^d)}
				+
				\|\boldsymbol{y}_n-\boldsymbol{z}_n\|_{\mathbb{L}^p(\mathbb{R}^d)}
				+
				\|\boldsymbol{z}_n-\boldsymbol{y}\|_{\mathbb{H}^1(\mathbb{R}^d)}
				+
				\|\boldsymbol{z}_n-\boldsymbol{y}\|_{\mathbb{L}^p(\mathbb{R}^d)}
				\\
				&<
				\frac1n
				+
				\|\boldsymbol{z}_n-\boldsymbol{y}\|_{\mathbb{H}^1(\mathbb{R}^d)}
				+
				\|\boldsymbol{z}_n-\boldsymbol{y}\|_{\mathbb{L}^p(\mathbb{R}^d)},
			\end{align*}
			which converges to zero as \(n\to\infty\).
		\end{itemize}
		Therefore,
		we obtain \eqref{eqn-conv},
		thereby completing the proof of Theorem~\ref{main-1}.
	\end{proof}
	
	\begin{remark}
We emphasize that the construction developed in Theorem~\ref{main-1}
relies essentially on Lemma~\ref{lem:scaling}, in particular on the
estimate \eqref{eqn-bogo}. Establishing an analogous estimate on
general unbounded domains may require additional assumptions on the
geometry of the domain. Moreover, the construction of
divergence-free mollification in the presence of a nonempty boundary
involves further technical difficulties and requires more delicate
estimates. We therefore leave the construction of such approximations on
general unbounded domains for future work.

Nevertheless, in Section~\ref{sec-4}, we overcome these difficulties
for unbounded domains of uniform $\mathrm{C}^{1,1}$-type by employing resolvent
estimates corresponding to the Stokes operator. In particular, we establish a simultaneous approximation
result for divergence-free vector fields in Lebesgue and Sobolev
spaces, which provides the approximation framework needed in our
analysis.
	\end{remark}

	\subsection{Energy equality on $\mathbb{R}^d$}
	We now establish a density result, based on the arguments in \cite[Lemma 2.6]{Galdi-2000} and \cite[Lemma 5.3]{Hajduk-2017}. This result plays a fundamental role in the proof of Theorem~\ref{main}.

	Let $\eta(t),$ $t\in\mathbb{R}$ be an even, strictly positive, smooth function with compact support contained in $(-1,1)$ and normalized such that
	\[
	\int_{-\infty}^{\infty}\eta(s)\,\mathrm{d}s=1.
	\]
	For each $h>0$, we define the rescaled mollifier by
	\begin{align}\label{mollifier}
	\eta^h(s):=\frac{1}{h}\eta\left(\frac{s}{h}\right).
	\end{align}
	Since $\eta$ is even and has unit integral, it follows that
	\[
	\int_0^h \eta^h(s)\,\mathrm{d}s=\frac{1}{2}.
	\]
	
	For any function $\boldsymbol{v}\in \mathrm{L}^p(0,T;\mathbb{X})$, where $\mathbb{X}$ is a Banach space and $p\in[1,\infty)$, the temporal mollification of $\boldsymbol{v}$ is defined, for $h\in(0,T)$, by
	\[
	\boldsymbol{v}^h(s)
	:=\int_0^T \boldsymbol{v}(\tau)\eta^h(s-\tau)\,\mathrm{d}\tau,
	\  s\in[0,T].
	\]
	This regularization provides a smooth approximation of $\boldsymbol{v}$ with respect to the time variable. We have the following properties of this mollification:
	\begin{lemma}[{\cite[Lemma 2.5, pp.~16]{Galdi-2000}}]
	Let  $\boldsymbol{v}\in \mathrm{L}^p(0,T;\mathbb{X})$, $p\in[1,\infty)$, where $\mathbb{X}$ is a Banach space. Then 
	$
	\boldsymbol{v}^h\in \mathrm{C}^k([0,T);\mathbb{X})
	\ \text{ for every } \ k\geq 0.
	$
	Furthermore, the mollification converges strongly to the original function as $h$ tends to zero:
	\begin{align}\label{eqn-conv-16}
		\lim_{h\to0}
		\|\boldsymbol{v}^h-\boldsymbol{v}\|_{\mathrm{L}^p(0,T;\mathbb{X})}
		=0.
	\end{align}
	The mollification procedure also preserves strong convergence of sequences. More precisely, if
	$
	\boldsymbol{v}_n\to\boldsymbol{v}
	\ \text{ in }\ \mathrm{L}^p(0,T;\mathbb{X}),
	$
	then, for every fixed $h>0$,
	\begin{align}\label{eqn-conv-11}
		\lim_{n\to\infty}
		\|\boldsymbol{v}_n^h-\boldsymbol{v}^h\|_{\mathrm{L}^p(0,T;\mathbb{X})}
		=0.
	\end{align}
	\end{lemma}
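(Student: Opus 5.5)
The statement to prove is the mollification lemma: $\boldsymbol{v}^h\in\mathrm{C}^k([0,T);\mathbb{X})$, the convergence \eqref{eqn-conv-16}, and the stability property \eqref{eqn-conv-11}. I would treat $\boldsymbol{v}$ as extended by zero outside $(0,T)$, so that $\boldsymbol{v}^h=\eta^h*\boldsymbol{v}$ is a convolution on $\mathbb{R}$ with $\boldsymbol{v}\in\mathrm{L}^p(\mathbb{R};\mathbb{X})$. The whole argument is the scalar one, with the absolute value replaced by $\|\cdot\|_{\mathbb{X}}$ and Lebesgue integrals by Bochner integrals.

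\textbf{Smoothness.} Since $\boldsymbol{v}\in\mathrm{L}^1_{\mathrm{loc}}(\mathbb{R};\mathbb{X})$ and $\eta^h\in\mathrm{C}_0^\infty(\mathbb{R})$, I would differentiate under the Bochner integral. The difference quotient is
\[
\frac{\boldsymbol{v}^h(s+\delta)-\boldsymbol{v}^h(s)}{\delta}=\int \boldsymbol{v}(\tau)\,\frac{\eta^h(s+\delta-\tau)-\eta^h(s-\tau)}{\delta}\,\d\tau .
\]
By the mean value theorem, this quotient of $\eta^h$ converges to $(\eta^h)'(s-\tau)$ uniformly in $\tau$, and it is supported in a fixed compact set. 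Hence the difference quotient converges in $\mathbb{X}$ to $\int\boldsymbol{v}(\tau)(\eta^h)'(s-\tau)\,\d\tau$. Iterating gives $(\boldsymbol{v}^h)^{(k)}=\boldsymbol{v}*(\eta^h)^{(k)}$. Continuity in $s$ follows by the same uniform-convergence bound, since $\|\boldsymbol{v}\|_{\mathbb{X}}$ is integrable on compacts.

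\textbf{Stability.} Young's inequality for Bochner integrals, via Minkowski's integral inequality, gives
\[
\|\eta^h*\boldsymbol{w}\|_{\mathrm{L}^p(0,T;\mathbb{X})}\le\|\eta^h\|_{\mathrm{L}^1}\|\boldsymbol{w}\|_{\mathrm{L}^p(0,T;\mathbb{X})}=\|\boldsymbol{w}\|_{\mathrm{L}^p(0,T;\mathbb{X})} .
\]
Applying this to $\boldsymbol{w}=\boldsymbol{v}_n-\boldsymbol{v}$, which is allowed by linearity of mollification, yields \eqref{eqn-conv-11} immediately, and in fact uniformly in $h$.

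\textbf{Convergence.} This is the main step, handled by a density argument. Simple functions $\sum_j\boldsymbol{x}_j\mathbf{1}_{E_j}$ with $\boldsymbol{x}_j\in\mathbb{X}$ and $|E_j|<\infty$ are dense in $\mathrm{L}^p(\mathbb{R};\mathbb{X})$ for $p<\infty$; this is where strong measurability is used. Moreover, each $\mathbf{1}_{E_j}$ can be approximated in scalar $\mathrm{L}^p$ by $\mathrm{C}_c(\mathbb{R})$ functions, so $\mathrm{C}_c(\mathbb{R};\mathbb{X})$ is dense.

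For $\boldsymbol{\phi}\in\mathrm{C}_c(\mathbb{R};\mathbb{X})$, uniform continuity gives
\[
\|\eta^h*\boldsymbol{\phi}-\boldsymbol{\phi}\|_{\infty}\le\sup_{|r|\le h}\sup_{s}\|\boldsymbol{\phi}(s-r)-\boldsymbol{\phi}(s)\|_{\mathbb{X}}\to0,
\]
using $\int\eta^h=1$ and $\supp\eta^h\subset(-h,h)$. The supports stay within a fixed compact set, so this also gives convergence in $\mathrm{L}^p$.

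Then a $3\varepsilon$-argument finishes the proof. Write
\[
\|\boldsymbol{v}^h-\boldsymbol{v}\|\le\|\eta^h*(\boldsymbol{v}-\boldsymbol{\phi})\|+\|\eta^h*\boldsymbol{\phi}-\boldsymbol{\phi}\|+\|\boldsymbol{\phi}-\boldsymbol{v}\|
\]
in $\mathrm{L}^p(0,T;\mathbb{X})$. The first and third terms are at most $\|\boldsymbol{v}-\boldsymbol{\phi}\|$ by the Young bound above.

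The main obstacle I expect is the vector-valued density of $\mathrm{C}_c(\mathbb{R};\mathbb{X})$ in $\mathrm{L}^p$. I would establish it through Pettis-type approximation by countably-valued simple functions and then truncate.
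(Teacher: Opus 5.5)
The paper does not prove this lemma; it quotes it from Galdi's notes (Lemma 2.5 there), so there is no argument of its own to compare yours with. Your proof is the standard one and is correct. The four steps are: extend by zero to $\mathbb{R}$; differentiate under the Bochner integral to get smoothness; use the bound $\|\eta^h*\boldsymbol{w}\|_{\mathrm{L}^p}\le\|\eta^h\|_{\mathrm{L}^1}\|\boldsymbol{w}\|_{\mathrm{L}^p}$, which gives the stability claim uniformly in $h$; and combine density of $\mathrm{C}_c(\mathbb{R};\mathbb{X})$ in $\mathrm{L}^p(\mathbb{R};\mathbb{X})$ for $p<\infty$ with uniform continuity to get the convergence claim.

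\textbf{The $3\varepsilon$ step needs one fix.} You wrote the splitting with $\mathrm{L}^p(0,T;\mathbb{X})$ norms throughout and bounded the first term by $\|\boldsymbol{v}-\boldsymbol{\phi}\|$ via the Young bound. On $(0,T)$, however, the convolution $\eta^h*(\widetilde{\boldsymbol{v}}-\boldsymbol{\phi})$ depends on the values of $\boldsymbol{\phi}$ on $(-h,T+h)$. So the Young bound with an $\mathrm{L}^p(0,T;\mathbb{X})$ norm on the right holds only for functions that vanish outside $(0,T)$. Either of two fixes works:
\begin{itemize}
\item Choose $\boldsymbol{\phi}$ close to the zero extension $\widetilde{\boldsymbol{v}}$ in $\mathrm{L}^p(\mathbb{R};\mathbb{X})$, and bound the first and third terms by $\|\widetilde{\boldsymbol{v}}-\boldsymbol{\phi}\|_{\mathrm{L}^p(\mathbb{R};\mathbb{X})}$.
\item Take $\boldsymbol{\phi}\in\mathrm{C}_c((0,T);\mathbb{X})$, which is dense in $\mathrm{L}^p(0,T;\mathbb{X})$ by the same argument.
\end{itemize}

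\textbf{A simpler route to the Young bound.} Use the pointwise estimate $\|\boldsymbol{w}^h(s)\|_{\mathbb{X}}\le(\eta^h*\|\boldsymbol{w}(\cdot)\|_{\mathbb{X}})(s)$ together with the scalar Young inequality. This avoids having to justify joint strong measurability of $(s,r)\mapsto\boldsymbol{w}(s-r)$ in a vector-valued Minkowski inequality.
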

	Hence, temporal mollification not only regularizes functions in time but also preserves strong convergence in the corresponding Bochner space.

	\begin{lemma}\label{lem-reg}
		Let $\Omega=\mathbb{R}^d$ and $p\in[2,\infty)$ be fixed. Then  $\mathcal{D}_{\sigma}([0,T)\times\Omega)$ is dense in $\mathrm{L}^2(0,T;\mathbb{V})\cap \mathrm{L}^{p}(0,T;\mathbb{L}^{p}_{\sigma})$. 
	\end{lemma}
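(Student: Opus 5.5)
We prove density by approximating first in time and then in space, following \cite[Lemma 2.6]{Galdi-2000} and \cite[Lemma 5.3]{Hajduk-2017}. Here $\mathcal{D}_{\sigma}([0,T)\times\Omega)$ is understood as the divergence-free test functions $\mathcal{V}_T$, and the target space $\mathrm{L}^2(0,T;\mathbb{V})\cap \mathrm{L}^{p}(0,T;\mathbb{L}^{p}_{\sigma})$ carries the sum of the two Bochner norms. Fix $\boldsymbol{v}$ in this space and $\delta>0$. All estimates below are made simultaneously in $\mathrm{L}^2(0,T;\mathbb{V})$ and $\mathrm{L}^p(0,T;\mathbb{L}^p_\sigma)$.

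\textbf{Step 1: cutoff near $t=T$.} Let $\theta_k\in \mathrm{C}^\infty(\mathbb{R})$ satisfy $0\le\theta_k\le 1$, $\theta_k=1$ on $(-\infty,T-2/k]$ and $\theta_k=0$ on $[T-1/k,\infty)$. Then $\theta_k\boldsymbol{v}\to\boldsymbol{v}$ in both Bochner spaces by dominated convergence, so we may assume $\boldsymbol{v}$ vanishes near $T$. If $T=\infty$, we first truncate to a finite interval, again by dominated convergence.

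\textbf{Step 2: temporal mollification.} We extend $\boldsymbol{v}$ by zero outside $(0,T)$ and apply the mollification \eqref{mollifier}. For $h$ small, $\boldsymbol{v}^h$ still vanishes near $T$, and by \eqref{eqn-conv-16}, applied once with $\mathbb{X}=\mathbb{V}$ and once with $\mathbb{X}=\mathbb{L}^p_\sigma$, $\boldsymbol{v}^h$ is $\delta$-close to $\boldsymbol{v}$ in both spaces. Moreover, $\boldsymbol{v}^h$ is smooth in time with values in $\mathbb{V}\cap\mathbb{L}^p_\sigma$, and it is uniformly continuous on $[0,T]$ in the norm $\|\cdot\|_{\mathbb{V}}+\|\cdot\|_{\mathbb{L}^p_\sigma}$.

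\textbf{Step 3: spatial approximation by piecewise interpolation.} Using the uniform continuity, we choose a partition $0=t_0<\dots<t_N=T$ and a smooth partition of unity $\{\phi_j\}$ on $[0,T]$ subordinate to small intervals around the $t_j$, with $\phi_j$ supported in $[0,T)$ where $\boldsymbol{v}^h$ is nonzero. Then
$$\Big\|\boldsymbol{v}^h-\sum_j\phi_j\boldsymbol{v}^h(t_j)\Big\|\le \delta \quad\text{in both spaces.}$$
By Theorem~\ref{main-1}, each $\boldsymbol{v}^h(t_j)\in\mathbb{V}\cap\mathbb{L}^p_\sigma$ admits $\boldsymbol{u}_j\in\mathbb{C}^\infty_{0,\sigma}(\mathbb{R}^d)$ with $\|\boldsymbol{u}_j-\boldsymbol{v}^h(t_j)\|_{\mathbb{V}}+\|\boldsymbol{u}_j-\boldsymbol{v}^h(t_j)\|_{\mathbb{L}^p}\le\delta$. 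The function $\boldsymbol{\psi}(t,x)=\sum_j\phi_j(t)\boldsymbol{u}_j(x)$ belongs to $\mathcal{V}_T$: it is smooth, compactly supported in $[0,T)\times\mathbb{R}^d$, and divergence-free. Since $\sum_j\phi_j\le1$, we get $\|\boldsymbol{\psi}-\sum_j\phi_j\boldsymbol{v}^h(t_j)\|\le C(T)\delta$ in both Bochner norms. The triangle inequality then gives the required density.

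\textbf{Main obstacle.} The key point is that the spatial approximation must be simultaneous in $\mathbb{V}$ and $\mathbb{L}^p_\sigma$ and must preserve the divergence-free property. Separate density of $\mathbb{C}^\infty_{0,\sigma}$ in each of $\mathbb{V}$ and $\mathbb{L}^p_\sigma$ does not give a single sequence serving both norms. Theorem~\ref{main-1} supplies exactly this simultaneous approximation, and the rest of the argument is routine once $T<\infty$ has been arranged.
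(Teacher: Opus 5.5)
Your argument is correct, and it takes a different route from the paper's. The paper mollifies in time and then applies the spatial approximation of Theorem~\ref{main-1} to $\boldsymbol{w}^h(t)$ at \emph{every} time $t$, setting $\boldsymbol{w}^h_n(t):=(\boldsymbol{w}^h(t))_n$. It concludes by dominated convergence in $t$, using bounds $\|\boldsymbol{w}^h_n(t)\|\le C\|\boldsymbol{w}^h(t)\|$ with $C$ independent of $n$.

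That route is shorter, but it tacitly needs the map $\boldsymbol{y}\mapsto\boldsymbol{y}_n$ to be a bounded linear operator, uniformly in $n$. Otherwise $t\mapsto\boldsymbol{w}^h_n(t)$ need not be smooth, or even measurable, in time. Moreover, the uniform bound is not part of Theorem~\ref{main-1}, whose mollification radius $\varepsilon_n$ is chosen depending on $\boldsymbol{z}_n$. This can be repaired: fixing $\varepsilon_n=1/n$ makes $\boldsymbol{y}\mapsto\rho_{1/n}*(\chi_n\boldsymbol{y}-\mathcal{B}_n(\boldsymbol{y}\cdot\nabla\chi_n))$ linear, and Lemma~\ref{lem:scaling} then supplies the $n$-uniform bounds. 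The paper also never arranges that its approximants vanish near $t=T$, and it does not address $T=\infty$.

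Your partition-of-unity interpolation uses Theorem~\ref{main-1} only as a black-box density statement at finitely many times $t_j$. So you need no linearity, measurability, or uniform bounds, and your Step~1 produces the compact support in $[0,T)$ explicitly. In effect you prove the abstract fact that density of $\mathbb{C}^\infty_{0,\sigma}$ in $\mathbb{V}\cap\mathbb{L}^p_\sigma$ implies the corresponding Bochner-space density, which transfers verbatim to any domain where the stationary density holds. The price is the bookkeeping in Step~3.

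The one point to tighten is the support near $T$. Suppose $\boldsymbol{v}^h$ vanishes on $[T-2\gamma,T]$. Take each $\phi_j$ supported in an interval of length less than $\gamma$ containing $t_j$, keep $\sum_j\phi_j\equiv1$ on $[0,T]$, and set $\boldsymbol{u}_j=0$ whenever $\boldsymbol{v}^h(t_j)=0$. Then $\boldsymbol{\psi}$ vanishes on $[T-\gamma,T]$, while your two error estimates are unaffected.
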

	\begin{proof}
	Let
$
	\boldsymbol{w}\in \mathrm{L}^2(0,T;\mathbb{V})
	\cap \mathrm{L}^{p}(0,T;\mathbb{L}^{p}_{\sigma}).
$
	We regularize $\boldsymbol{w}$ in time using the mollification procedure introduced above and in space using Theorem~\ref{main-1}. Denoting the resulting space-time regularization by $\boldsymbol{w}_n^h$, we obtain
	\[
	\boldsymbol{w}_n^h(x,t)
	:=\left(\boldsymbol{w}^h(x,t)\right)_n\  \text{ for }\ (x,t)\ \text{ in }\ \Omega\times[0,T),
	\]
	where $\boldsymbol{w}^h$ denotes the temporal mollification of $\boldsymbol{w}$ and the subscript $n$ indicates the spatial regularization provided by Theorem~\ref{main-1}. It is clear that $	\boldsymbol{w}_n^h\in \mathcal{D}_{\sigma}([0,T)\times\Omega)$. Applying  Theorem \ref{main-1}, we infer  for all $t\in[0,T)$ that 
	\begin{align}\label{eqn-conv-12}
		\lim_{n\to\infty}\left\|	\boldsymbol{w}_n^h(t)-	\boldsymbol{w}^h(t)\right\|_{\mathbb{L}^{p}(\Omega)}^{p}=0
	\end{align}
	and 
	\begin{align}\label{eqn-conv-13}
		\lim_{n\to\infty}\left\|	\boldsymbol{w}_n^h(t)-	\boldsymbol{w}^h(t)\right\|_{\mathbb{V}}^{2}= 	\lim_{n\to\infty}\left\|	\boldsymbol{w}_n^h(t)-	\boldsymbol{w}^h(t)\right\|_{\mathbb{L}^2(\Omega)}^{2}
		+	\lim_{n\to\infty}\left\|\nabla(\boldsymbol{w}_n^h(t)-	\boldsymbol{w}^h(t))\right\|_{\mathbb{L}^2(\Omega)}^{2}=0. 
	\end{align}
	The convergence given in \eqref{eqn-conv-16} implies, for a given $\varepsilon>0$, we can choose $h>0$ sufficiently small so that 
	\begin{align}\label{eqn-conv-14}
		\int_0^T\|\boldsymbol{w}^h(t)-\boldsymbol{w}(t)\|_{\mathbb{L}^{p}(\Omega)}^{p}\d t<\varepsilon\ \text{ and }\  	\int_0^T\|\boldsymbol{w}^h(t)-\boldsymbol{w}(t)\|_{\mathbb{V}}^{2}\d t<\varepsilon. 
	\end{align}
	On the other hand, combining \eqref{eqn-conv-12} and \eqref{eqn-conv-13} with the Lebesgue Dominated Convergence Theorem yields, for each fixed $h\in(0,T)$,
\begin{align}\label{eqn-conv-15}
	\lim_{n\to\infty}\int_0^T\left\|	\boldsymbol{w}_n^h(t)-	\boldsymbol{w}^h(t)\right\|_{\mathbb{L}^{p}(\Omega)}^{p}\d t=0\ \text{ and }\  	\lim_{n\to\infty}\int_0^T 	\left\|	\boldsymbol{w}_n^h(t)-	\boldsymbol{w}^h(t)\right\|_{\mathbb{V}}^{2}\d t=0. 
\end{align}
Indeed, the required uniform bounds follow from the spatial regularization estimates. More precisely, for every $n\in\mathbb{N}$ and $t\in[0,T)$, we have
\[
\|\boldsymbol{w}_n^h(t)\|_{\mathbb{L}^{p}(\Omega)}
\leq C\|\boldsymbol{w}^h(t)\|_{\mathbb{L}^{p}(\Omega)}
\ \text{ and }\
\|\boldsymbol{w}_n^h(t)\|_{\mathbb{V}}
\leq C\|\boldsymbol{w}^h(t)\|_{\mathbb{V}},
\]
where $C>0$ is independent of $n$. Since
$
\boldsymbol{w}^h
\in \mathrm{L}^{p}(0,T;\mathbb{L}^{p}(\Omega))
\cap \mathrm{L}^2(0,T;\mathbb{V}),
$
these estimates provide the necessary uniform domination. Consequently, the desired convergence follows by combining \eqref{eqn-conv-14} and \eqref{eqn-conv-15} with the triangle inequality. This completes the proof of the lemma.
	\end{proof}

	As an application of Theorem~\ref{main-1}, we now prove the energy equality for the critical and supercritical  CBF equations posed on $\Omega=\mathbb{R}^d$, $d\in\{2,3\}$.

	\begin{theorem}\label{main}
	Let $\Omega=\mathbb{R}^d$, $d\in\{2,3\}$  and $r\in[1,\infty).$ Let $\boldsymbol{u}_0\in \mathbb{H}$ and $\boldsymbol{f}\in \mathrm{L}^{2}(0,T;\mathbb{V}')$  be given.  Then  there exists a Leray-Hopf weak solution to the system (\ref{abstract-CBF})  
		\begin{align} \label{eqn-regularity}
			\boldsymbol{u}\in \mathrm{L}^{\infty}(0,T;\mathbb{H})\cap \mathrm{L}^2(0,T;\mathbb{V})\cap \mathrm{L}^{r+1}(0,T; \mathbb{L}_{\sigma}^{r+1}), \ \frac{\d\boldsymbol{u}}{\d t} \in \mathrm{L}^{p_d}(0,T;\mathbb{V}')+ \mathrm{L}^{\frac{r+1}{r}}(0,T;\mathbb{L}_{\sigma}^{\frac{r+1}{r}}),
		\end{align}
		where $p_d$ is defined in \eqref{eqn-pd}.

		Moreover, for  $d=2$ with $r\in[1,\infty)$ and $d=3$ with $r\in[3,\infty)$, every Leray-Hopf weak solution satisfies the following energy equality:
			\begin{align}\label{energy-equality}
			&	\|\boldsymbol{y}(t_1)\|_{\H}^2+2\mu\int_{0}^{t_1}\|\nabla\boldsymbol{y}(s)\|_{\mathbb{H}}^2\d s+2\alpha\int_{0}^{t_1}\|\boldsymbol{y}(s)\|_{\mathbb{H}}^{2}\d s+2\beta\int_{0}^{t_1}\|\boldsymbol{y}(s)\|_{\mathbb{L}_{\sigma}^{r+1}}^{r+1}\d s\nonumber\\&=\|\boldsymbol{y}_0\|_{\H}^2+2\int_{0}^{t_1}\langle\boldsymbol{g}(s),\boldsymbol{y}(s)\rangle\d s,
		\end{align}
		for all $t_1\in(0,T)$. 
		
		Also, for  $d=2$ with $r\in[1,\infty)$ and $d=3$ with $r\in[3,\infty)$ ($4\beta\mu\geq 1$ for $d=r=3$), the Leray-Hopf weak solution is unique. 
	\end{theorem}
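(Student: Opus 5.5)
Existence will come from a Faedo--Galerkin scheme on expanding balls or, more simply, from truncating the domain to $B_R$, solving the CBF system there, and letting $R\to\infty$. Concretely, I take a Galerkin basis of $\mathbb{V}\cap\mathbb{L}^{r+1}_\sigma$ built from $\mathbb{C}^\infty_{0,\sigma}(\mathbb{R}^d)$ (dense by Theorem~\ref{main-1}). Testing the approximate equations with the solution gives the uniform energy bound in $\mathrm{L}^\infty(0,T;\mathbb{H})\cap\mathrm{L}^2(0,T;\mathbb{V})\cap\mathrm{L}^{r+1}(0,T;\mathbb{L}^{r+1}_\sigma)$. The time-derivative bound in $\mathrm{L}^{p_d}(0,T;\mathbb{V}')+\mathrm{L}^{\frac{r+1}{r}}(0,T;\mathbb{L}^{\frac{r+1}{r}}_\sigma)$ follows from estimating $\mathcal{B}$ via Ladyzhenskaya/Gagliardo--Nirenberg, or for $d=3$, $r\ge3$, via $\|\,|\boldsymbol{y}|\,|\nabla\boldsymbol{y}|\,\|$ interpolation.

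Since $\mathbb{R}^d$ lacks compact embeddings, I pass to the limit using Aubin--Lions on each ball $B_k$ with a diagonal argument. This yields strong $\mathrm{L}^2_{\mathrm{loc}}$ convergence, which suffices for the convective term and, by local a.e.\ convergence plus the $\mathrm{L}^{r+1}$ bound, for the Forchheimer term on compactly supported test functions. The strong energy inequality follows from weak lower semicontinuity together with the usual Leray--Hopf argument, and attainment of the initial data follows from weak continuity plus the energy inequality at $t_0=0$.

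For the energy equality, I follow the Galdi/Hajduk--Robinson mollification scheme. By Lemma~\ref{lem-reg}, I choose $\boldsymbol{y}_n\in\mathcal{D}_\sigma$ with $\boldsymbol{y}_n\to\boldsymbol{y}$ in $\mathrm{L}^2(0,T;\mathbb{V})\cap\mathrm{L}^{r+1}(0,T;\mathbb{L}^{r+1}_\sigma)$, and test \eqref{3.13} on $(0,t_1)$ with $\boldsymbol{\psi}=(\boldsymbol{y}_n^h)^h$. Letting $n\to\infty$ for fixed $h$ handles the linear, damping and forcing terms directly, since $\langle\,|\boldsymbol{y}|^{r-1}\boldsymbol{y},\cdot\rangle$ is continuous on $\mathrm{L}^{r+1}(0,T;\mathbb{L}^{r+1})$ and the time derivative term is controlled through $\frac{\d\boldsymbol{y}}{\d t}$ in the dual space. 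Evenness of $\eta$ kills $\int(\boldsymbol{y}^h,\partial_t\boldsymbol{y}^h)$, leaving boundary terms $(\boldsymbol{y}(t_1),\boldsymbol{y}^h(t_1))-(\boldsymbol{y}(0),\boldsymbol{y}^h(0))$. As $h\to0$, these converge to $\tfrac12\|\boldsymbol{y}(t_1)\|^2-\tfrac12\|\boldsymbol{y}_0\|^2$. This step uses weak continuity and $\int_0^h\eta^h=\tfrac12$, together with continuity of the norm at $0$ and at Lebesgue points; continuity at every $t_1$ follows afterwards because the energy equality forces norm continuity.

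The main obstacle is the trilinear term $\int_0^{t_1}b(\boldsymbol{y},\boldsymbol{y},\boldsymbol{y}_n^h)$: it must converge as $n\to\infty$ and then tend to $b(\boldsymbol{y},\boldsymbol{y},\boldsymbol{y})=0$ as $h\to0$. For $d=2$, I bound it by $\|\boldsymbol{y}\|_{\mathbb{L}^4}\|\nabla\boldsymbol{y}\|_{\mathbb{L}^2}\|\boldsymbol{\phi}\|_{\mathbb{L}^4}$ with Ladyzhenskaya, using $\boldsymbol{y}\in\mathrm{L}^4(0,T;\mathbb{L}^4)$. For $d=3$, $r\ge3$, I write $|b(\boldsymbol{y},\boldsymbol{y},\boldsymbol{\phi})|\le\|\,|\boldsymbol{y}|\,|\boldsymbol{\phi}|\,\|_{\mathbb{L}^2}\|\nabla\boldsymbol{y}\|_{\mathbb{L}^2}$ and interpolate $\mathbb{L}^4$ between $\mathbb{L}^2$ and $\mathbb{L}^{r+1}$, so that $\boldsymbol{y}\in\mathrm{L}^4(0,T;\mathbb{L}^4)$ (for $r=3$ directly, for $r>3$ via $\mathrm{L}^\infty\mathbb{L}^2\cap\mathrm{L}^{r+1}\mathbb{L}^{r+1}$). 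Hence the trilinear form is continuous on $\mathrm{L}^2\mathbb{V}\times\mathrm{L}^4\mathbb{L}^4$, which legitimizes both limits.

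Uniqueness follows by subtracting two solutions, applying the same energy-equality argument to the difference, and using \eqref{2.23}. In 2D I absorb $b(\boldsymbol{w},\boldsymbol{y},\boldsymbol{w})$ via Ladyzhenskaya and Gronwall. In 3D I write $b(\boldsymbol{w},\boldsymbol{w},\boldsymbol{y})\le\mu\|\nabla\boldsymbol{w}\|^2+\frac1{4\mu}\|\,|\boldsymbol{y}|\boldsymbol{w}\|^2$; for $r=3$ this is absorbed by $\tfrac\beta2\|\,|\boldsymbol{y}|\boldsymbol{w}\|^2$ when $4\beta\mu\ge1$ (tracking the constant in \eqref{2.23} more carefully), and for $r>3$ I use Young's inequality $|\boldsymbol{y}|^2\le\epsilon|\boldsymbol{y}|^{r-1}+C_\epsilon$ followed by Gronwall.
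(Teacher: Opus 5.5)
\textbf{Where your proposal matches the paper.} Your energy-equality argument is the paper's. You use Lemma~\ref{lem-reg} for the spatial approximation and an even time mollifier, let $n\to\infty$ and then $h\to0$, and control the trilinear term through $\boldsymbol{y}\in\mathrm{L}^4(0,T;\mathbb{L}^4)$. Your uniqueness arguments for $d=2$ and for $d=3$, $r>3$ are the paper's Cases~2 and~1.

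One bookkeeping fix in the energy equality. Test with the single mollification $\boldsymbol{y}_n^h$.
\begin{itemize}
\item Evenness kills $\int_0^{t_1}(\boldsymbol{y},\partial_s\boldsymbol{y}^h)\,\mathrm{d}s$. It does not kill $\int_0^{t_1}(\boldsymbol{y}^h,\partial_s\boldsymbol{y}^h)\,\mathrm{d}s$, which is an exact derivative.
\item The boundary terms then tend to $\frac12\|\boldsymbol{y}(t_1)\|_{\mathbb{H}}^2$ and $\frac12\|\boldsymbol{y}_0\|_{\mathbb{H}}^2$ by weak continuity alone, at every $t_1$.
\item So no Lebesgue-point restriction is needed. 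This matters because your a posteriori extension is not valid as stated: a.e.\ energy equality plus weak continuity only gives ``$\le$'' at the exceptional times.
\end{itemize}

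\textbf{The genuine gap: uniqueness for $d=r=3$ under $4\beta\mu\ge1$.} Set $\boldsymbol{w}=\boldsymbol{y}_1-\boldsymbol{y}_2$. Your bound
\[
|b(\boldsymbol{w},\boldsymbol{w},\boldsymbol{y}_2)|\le\mu\|\nabla\boldsymbol{w}\|_{\mathbb{H}}^2+\frac1{4\mu}\||\boldsymbol{y}_2|\boldsymbol{w}\|_{\mathbb{H}}^2
\]
must be absorbed by $\beta\langle\mathcal{C}(\boldsymbol{y}_1)-\mathcal{C}(\boldsymbol{y}_2),\boldsymbol{w}\rangle$. For the weight $|\boldsymbol{y}_2|$, \eqref{2.23} supplies only $\frac\beta2\||\boldsymbol{y}_2|\boldsymbol{w}\|_{\mathbb{H}}^2$, so you need $2\beta\mu\ge1$.

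Sharpening \eqref{2.23} cannot rescue this one-sided route. Take $\boldsymbol{y}_1=-\frac12\boldsymbol{y}_2$, which is divergence-free. Then $\boldsymbol{w}=-\frac32\boldsymbol{y}_2$, and one computes exactly
\[
\langle\mathcal{C}(\boldsymbol{y}_1)-\mathcal{C}(\boldsymbol{y}_2),\boldsymbol{w}\rangle=\frac34\||\boldsymbol{y}_2|\boldsymbol{w}\|_{\mathbb{H}}^2 .
\]
So no bound $c\,\||\boldsymbol{y}_2|\boldsymbol{w}\|_{\mathbb{H}}^2$ with $c>\frac34$ exists, and the best you could reach is $3\beta\mu\ge1$.

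The missing idea, used in the paper's Case~3, is to symmetrize before estimating. Since $b(\boldsymbol{w},\boldsymbol{w},\boldsymbol{w})=0$, one has $b(\boldsymbol{w},\boldsymbol{w},\boldsymbol{y}_2)=\frac12 b(\boldsymbol{w},\boldsymbol{w},\boldsymbol{y}_1+\boldsymbol{y}_2)$. Using $|\boldsymbol{y}_1+\boldsymbol{y}_2|^2\le2|\boldsymbol{y}_1|^2+2|\boldsymbol{y}_2|^2$, this gives
\[
|b(\boldsymbol{w},\boldsymbol{w},\boldsymbol{y}_2)|\le\mu\|\nabla\boldsymbol{w}\|_{\mathbb{H}}^2+\frac{1}{8\mu}\Big(\||\boldsymbol{y}_1|\boldsymbol{w}\|_{\mathbb{H}}^2+\||\boldsymbol{y}_2|\boldsymbol{w}\|_{\mathbb{H}}^2\Big).
\]
Both halves of \eqref{2.23} absorb the right-hand side precisely when $\frac1{8\mu}\le\frac\beta2$, that is, $4\beta\mu\ge1$.

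\textbf{A lesser point on existence,} which the paper only cites from the literature. The usual proof of the strong energy inequality for a.e.\ $t_0$ needs $\boldsymbol{y}_m(t_0)\to\boldsymbol{y}(t_0)$ strongly in $\mathbb{H}$. Aubin--Lions compactness on balls does not give this on $\mathbb{R}^d$.
\begin{itemize}
\item For $d=2$ or $r\ge3$ this is harmless. Your energy-equality argument uses only the weak formulation and weak continuity, so it applies to the limit directly and yields the strong energy inequality.
\item For $d=3$, $1\le r<3$, you need additional uniform tail control, or an appeal to the literature as the paper makes.
\end{itemize}
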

	
	\begin{proof}
		We divide the proof into the following steps: 
		
		\vskip 0.1 cm
		\noindent 
		\textbf{Step 1:} \emph{Existence.}
		The authors in \cite{Antontsev-2010} employed a Galerkin approximation method together with compactness arguments to establish the existence of Leray–Hopf weak solutions to \eqref{abstract-CBF} on bounded domains. In \cite{Gautam-2025}, the authors combined a Galerkin approximation scheme with the local monotonicity of the linear and nonlinear operators and the Minty–Browder technique to establish existence results for the critical and supercritical CBF equations on bounded domains. Similar approaches can be employed to obtain existence results on general unbounded domains, including $\mathbb{R}^d$ (\cite[Theorem 1]{Cai-2008}, \cite[Theorem 3.7]{Kinra-2024}). 
		We only prove the energy equality and uniqueness results.

			\vskip 0.1 cm
		\noindent 
		\textbf{Step 2:} \emph{Energy equality.}
		Our proof of the energy equality is based on the arguments developed in
		\cite{Fefferman-2022,Galdi-2011,Gautam-2025,Hajduk-2017} and related
		works. Energy equality for the three-dimensional critical CBF
		equations has previously been established on the torus in
		\cite[Theorem 1.4]{Hajduk-2017} and on bounded domains in
		\cite[Theorem 5.4]{Fefferman-2022}; see also
		\cite[Theorem 3.5]{Gautam-2025} for the supercritical case. Nevertheless,
		we present the proof in our setting for completeness.
		
	Let $\boldsymbol{y}(\cdot)$ be a Leray-Hopf weak solution of the CBF equations \eqref{abstract-CBF}, for $d=2$ with $r\in[1,\infty)$, and for $d=3$ with $r\in[3,\infty)$. Note that $\boldsymbol{y}(t)\in\mathbb{V}\cap\mathbb{L}_{\sigma}^{r+1}$ for a.e.\ $t\in[0,T]$. By Theorem \ref{main-1}, $\boldsymbol{y}(t)$ can be approximated by functions possessing the desired regularity. We denote such an approximating sequence by $\boldsymbol{y}_n(t)$. According to Theorem~\ref{main-1}, this approximation satisfies, for a.e.\ $t\in[0,T]$:
	\begin{equation}\label{4.50}
	\left\{
	\begin{aligned}
		(1)~& \nabla\cdot\boldsymbol{y}_n(t)=0 \  \text{ and } \boldsymbol{y}_n(t)\big|_{\partial\Omega}=0 \text{ for all } t \in [0,T], \\
		(2)~& \lim\limits_{n\to\infty}\|\boldsymbol{y}_n(t)-\boldsymbol{y}(t)\|_{\mathbb{H}^1_0(\Omega)}=0 \text{ with } \|\boldsymbol{y}_n(t)\|_{\mathbb{H}^1} \leq C  \|\boldsymbol{y}(t)\|_{\mathbb{H}^1}
		\text{ for all } t \in [0,T], \\
		(3)~& \lim\limits_{n\to\infty}\|\boldsymbol{y}_n(t)-\boldsymbol{y}(t)\|_{\mathbb{L}^p(\Omega)}=0 \text{ with } \|\boldsymbol{y}_n(t)\|_{\mathbb{L}^p} \leq C  \|\boldsymbol{y}(t)\|_{\L^p} \text{ for } p \in (1,\infty),  \\ &\text{ for a.e. } t \in [0,T].
	\end{aligned}
	\right.
\end{equation}
Since $\boldsymbol{y}\in\mathrm{L}^{r+1}(0,T;\mathbb{L}_{\sigma}^{r+1})$ and $\|\boldsymbol{y}_n(t)-\boldsymbol{y}(t)\|_{\mathbb{L}_{\sigma}^{r+1}}\to 0$ for all $t\in[0,T]$, an application of the Lebesgue dominated convergence theorem (with dominating function $(1+C)\|\boldsymbol{y}(t)\|_{\mathbb{L}_{\sigma}^{r+1}}$) yields
\begin{align}\label{335}
	\|\boldsymbol{y}_n-\boldsymbol{y}\|_{\mathrm{L}^{r+1}(0,T;\mathbb{L}_{\sigma}^{r+1})}\to 0, \ \text{ as }\ n\to\infty.
\end{align}
Moreover, since $\boldsymbol{y}\in\mathrm{L}^{2}(0,T;\mathbb{V})$, an identical argument gives
\begin{align}\label{335a}
	\|\boldsymbol{y}_n-\boldsymbol{y}\|_{\mathrm{L}^{2}(0,T;\mathbb{V})}\to 0, \ \text{ as }\ n\to\infty.
\end{align}

In view of the regularity result established in Lemma~\ref{lem-reg}, for any fixed $t_1\in(0,T)$ we may select a sequence of suitable test functions
\begin{align*}
	\boldsymbol{y}_n^h(t):=\int_0^{t_1}\eta_h(t-s)\boldsymbol{y}_n(s)\,\mathrm{d}s,
\end{align*}
where the parameter $h$ satisfies $0<h<T-t_1$ and $h<t_1$, and $\eta_h$ denotes the even mollifier introduced in \eqref{mollifier}. Since $\boldsymbol{y}_n^h\in\mathcal{D}_{\sigma}([0,T)\times\Omega)$, we may take $\boldsymbol{y}_n^h$ as a test function in \eqref{3.13}, which yields
\begin{align}\label{3.23}
	-&\int_0^{t_1}\bigg(\boldsymbol{y}(s),\frac{\mathrm{d}\boldsymbol{y}_n^h(s)}{\mathrm{d}s}\bigg)\,\mathrm{d}s+ \int_0^{t_1}\langle\mu\mathcal{A}\boldsymbol{y}(s)+\mathcal{B}(\boldsymbol{y}(s))+\alpha\boldsymbol{y}(s)+\beta\mathcal{C}(\boldsymbol{y}(s))-\boldsymbol{f}(s),\boldsymbol{y}_n^h(s)\rangle\,\mathrm{d}s\nonumber\\&=-(\boldsymbol{y}(t_1),\boldsymbol{y}_n^h(t_1)) +(\boldsymbol{y}(0),\boldsymbol{y}_n^h(0)).
\end{align}

Let us first consider 
\begin{align*}
&\left|\int_0^{t_1}\bigg(\boldsymbol{y}(s),\frac{\d\boldsymbol{y}_n^h(s)}{\d s}\bigg)\d s-\int_0^{t_1}\bigg(\boldsymbol{y}(s),\frac{\d\boldsymbol{y}^h(s)}{\d s}\bigg)\d s\right|
\nonumber\\&\leq \int_0^{t_1}\|\boldsymbol{y}(s)\|_{\mathbb{H}}\left\|\frac{\d\boldsymbol{y}_n^h(s)}{\d s}-\frac{\d\boldsymbol{y}^h(s)}{\d s}\right\|_{\mathbb{H}}\d s\leq \|\boldsymbol{y}\|_{\mathrm{L}^{\infty}(0,T;\mathbb{H})}
\left\|\frac{\d\boldsymbol{y}_n^h}{\d s}-
\frac{\d\boldsymbol{y}^h}{\d s}\right\|_{\mathrm{L}^1(0,T;\mathbb{H})}.
\end{align*}
Using Minkowski's integral inequality, Fubini's theorem and the convergence given in \eqref{335a}, we find 
\begin{align*}
	\left\|\frac{\d\boldsymbol{y}_n^h}{\d s}-\frac{\d\boldsymbol{y}^h}{\d s} \right\|_{\mathrm{L}^1(0,T;\mathbb{H})}&=\int_0^{t_1}\left\|\int_0^{t_1}|\dot{\eta}^h(t-s)|\left(\boldsymbol{y}_n^h(s)-\boldsymbol{y}^h(s)\right)\d s\right\|_{\mathbb{H}}\d t\nonumber\\&\leq  \int_0^{t_1}\int_0^{t_1}|\dot{\eta}^h(t-s)|\left\| \boldsymbol{y}_n^h(s)-\boldsymbol{y}^h(s) \right\|_{\mathbb{H}}\d s\d t
	\nonumber\\&=\int_0^{t_1}\left\| \boldsymbol{y}_n^h(s)-\boldsymbol{y}^h(s) \right\|_{\mathbb{H}}\left(\int_0^{t_1}|\dot{\eta}^h(t-s)|\d t\right)\d s
	\nonumber\\&\leq C\|\boldsymbol{y}_n^h-\boldsymbol{y}^h\|_{\mathrm{L}^1(0,T;\mathbb{H})}\to 0\ \text{ as }\ n\to\infty, 
\end{align*}
where $\dot{\eta}^h$ derivative of $\eta^h$ with respect to $t$ and $C=\int_{-\infty}^{\infty}|\dot{\eta}^h(t)|\d t<\infty$. Therefore, we deduce 
\begin{align*}
	-\int_0^{t_1}\bigg(\boldsymbol{y}(s),\frac{\d\boldsymbol{y}_n^h(s)}{\d t}\bigg)\d s\to 	-\int_0^{t_1}\bigg(\boldsymbol{y}(s),\frac{\d\boldsymbol{y}^h(s)}{\d t}\bigg)\d s\ \text{ as } \ n\to\infty. 
\end{align*}
Let us now take the second term in the right hand side of the equality \eqref{3.23} and estimate it as 
\begin{align}\label{eqn-strong-1}
&	\left|\int_0^{t_1}\langle\mu\mathcal{A}\boldsymbol{y}(s)+\mathcal{B}(\boldsymbol{y}(s))+\alpha\boldsymbol{y}(s)+\beta\mathcal{C}(\boldsymbol{y}(s))-\boldsymbol{f}(s),\boldsymbol{y}_n^h(s)-\boldsymbol{y}^h(s)\rangle\d s\right|
\nonumber\\&\leq \mu\int_0^{t_1}\|\nabla\boldsymbol{y}(s)\|_{\mathbb{H}}\|\nabla(\boldsymbol{y}_n^h(s)-\boldsymbol{y}^h(s))\|_{\mathbb{H}}\d s+\int_0^{t_1}\|\boldsymbol{y}(s)\|_{\mathbb{L}^4}^2 \|\nabla(\boldsymbol{y}_n^h(s)-\boldsymbol{y}^h(s))\|_{\mathbb{H}}\d s
\nonumber\\&\quad+\alpha\int_0^{t_1}\|\boldsymbol{y}(s)\|_{\mathbb{H}}\|\boldsymbol{y}_n^h(s)-\boldsymbol{y}^h(s)\|_{\mathbb{H}}\d s+\beta\int_0^t\|\boldsymbol{y}(s)\|_{\mathbb{L}^{r+1}_{\sigma}}^{r}\|\boldsymbol{y}_n^h(s)-\boldsymbol{y}^h(s)\|_{\mathbb{L}^{r+1}_{\sigma}}\d s\nonumber\\&\quad+\int_0^{t_1}\|\boldsymbol{f}(s)\|_{\mathbb{V}'}\|\boldsymbol{y}_n^h(s)-\boldsymbol{y}^h(s)\|_{\mathbb{V}}\d s
\nonumber\\&\leq\left\{(\mu+\alpha)\|\boldsymbol{y}\|_{\mathrm{L}^2(0,T;\mathbb{H})}+\|\boldsymbol{y}\|_{\mathbb{L}^4(0,T;\mathbb{L}^4(\Omega))}^2+\beta\|\boldsymbol{y}\|_{\mathrm{L}^{r+1}(0,T;\mathbb{L}^{r+1}_{\sigma})}^r+\|\boldsymbol{f}\|_{\mathrm{L}^2(0,T;\mathbb{V}')}\right\}\nonumber\\&\quad\times\left\{\|\boldsymbol{y}_n^h-\boldsymbol{y}^h\|_{\mathrm{L}^2(0,T;\mathbb{V})}+\|\boldsymbol{y}_n^h-\boldsymbol{y}^h\|_{\mathrm{L}^{r+1}(0,T;\mathbb{L}^{r+1}_{\sigma})}\right\}
\nonumber\\&\to 0\ \text{ as } h\to 0,
\end{align}
by using \eqref{335} and \eqref{335a}. One can get the convergence of $(\boldsymbol{y}(t_1),\boldsymbol{y}_n^h(t_1))\to (\boldsymbol{y}(t_1),\boldsymbol{y}^h(t_1))$ as $n\to\infty$ in the following way: 
\begin{align*}
	|(\boldsymbol{y}(t_1),\boldsymbol{y}_n^h(t_1)-\boldsymbol{y}^h(t_1))|&\leq\|\boldsymbol{y}(t_1)\|_{\mathbb{H}}\|\boldsymbol{y}_n^h(t_1)-\boldsymbol{y}^h(t_1)\|_{\mathbb{H}}\nonumber\\&\leq \|\boldsymbol{y}\|_{\mathrm{L}^{\infty}(0,T;\mathbb{H})}\int_0^{t_1}\eta^h(t_1-s)\|\boldsymbol{y}_n(s)-\boldsymbol{y}(s)\|_{\mathbb{H}}\d s\nonumber\\&\to 0\ \text{ as }\ n\to\infty, 
\end{align*}
by an application of the Lebesgue dominated convergence theorem, since $\int_0^{t_1}\eta^h(t_1-s)\d s=\int_0^{t_1}\eta^h(r)\d r\leq \int_0^{h}\eta^h(r)\d r=\frac{1}{2},$ $\boldsymbol{y}\in \mathrm{L}^{\infty}(0,T;\mathbb{H})$ and $\boldsymbol{y}_n(s)\to \boldsymbol{y}(s)$ in $\mathbb{H}$ for a.e. $s\in(0,T)$. Similarly, one can show the convergence of $(\boldsymbol{y}(0),\boldsymbol{y}_n^h(0))\to (\boldsymbol{y}(0),\boldsymbol{y}^h(0))$ as $n\to\infty$. Therefore, passing $n\to\infty$ in \eqref{3.23}, we find 
\begin{align}\label{eqn-new-s}
	-&\int_0^{t_1}\bigg(\boldsymbol{y}(s),\frac{\d\boldsymbol{y}_n^h(s)}{\d s}\bigg)\d s+ \int_0^{t_1}\langle\mu\mathcal{A}\boldsymbol{y}(s)+\mathcal{B}(\boldsymbol{y}(s))+\alpha\boldsymbol{y}(s)+\beta\mathcal{C}(\boldsymbol{y}(s))-\boldsymbol{f}(s),\boldsymbol{y}^h(s)\rangle\d s\nonumber\\&=-(\boldsymbol{y}(t_1),\boldsymbol{y}^h(t_1)) +(\boldsymbol{y}(0),\boldsymbol{y}^h(0)).
\end{align}
We now proceed to pass to the limit as $h\to0$ in \eqref{eqn-new-s}. Since the function $\eta^h$ is even in $(-h, h)$, its derivative is odd, that is,  $\dot{\eta}^h(r)=-\dot{\eta}^h(-r)$. Therefore, we immediately obtain (see \cite[pp. 7154]{Hajduk-2017}) 
\begin{align}
	\int_0^{t_1}\bigg(\boldsymbol{y}(s),\frac{\d\boldsymbol{y}^h(s)}{\d s}\bigg)\d s&= \int_0^{t_1}\int_0^{t_1}\dot{\eta}^h(s-\tau)(\boldsymbol{y}(s),\boldsymbol{y}(\tau))\d \tau\d s\nonumber\\&=- \int_0^{t_1}\int_0^{t_1}\dot{\eta}^h(\tau-s)(\boldsymbol{y}(s),\boldsymbol{y}(\tau))\d \tau\d s\nonumber\\&=- \int_0^{t_1}\int_0^{t_1}\dot{\eta}^h(\tau-s)(\boldsymbol{y}(\tau),\boldsymbol{y}(s))\d \tau\d s \nonumber\\&=- \int_0^{t_1}\int_0^{t_1}\dot{\eta}^h(s-\tau)(\boldsymbol{y}(s),\boldsymbol{y}(\tau))\d s\d \tau=0. 
\end{align}
Replacing $\boldsymbol{y}_n^h$ with $\boldsymbol{y}^h$ and $\boldsymbol{y}^h$ with $\boldsymbol{y}$ in \eqref{eqn-strong-1}, we find as $h\to 0$ 
	\begin{align}\label{eqn-strong-2}
	&	\int_0^{t_1}\langle\mu\mathcal{A}\boldsymbol{y}(s)+\mathcal{B}(\boldsymbol{y}(s))+\alpha\boldsymbol{y}(s)+\beta\mathcal{C}(\boldsymbol{y}(s))-\boldsymbol{f}(s),\boldsymbol{y}^h(s)\rangle\d s
	\nonumber\\&\to \int_0^{t_1}\langle\mu\mathcal{A}\boldsymbol{y}(s)+\mathcal{B}(\boldsymbol{y}(s))+\alpha\boldsymbol{y}(s)+\beta\mathcal{C}(\boldsymbol{y}(s))-\boldsymbol{f}(s),\boldsymbol{y}(s)\rangle\d s  
	\nonumber\\&= \mu\int_0^{t_1}\|\nabla\boldsymbol{y}(s)\|_{\mathbb{H}}^2\d s+\alpha\int_0^{t_1}\|\boldsymbol{y}(s)\|_{\mathbb{H}}^2\d s+\beta\int_0^{t_1}\|\boldsymbol{y}(s)\|_{\mathbb{L}^{r+1}_{\sigma}}^{r+1}\d s-\int_0^{t_1}\langle\boldsymbol{f}(s),\boldsymbol{y}(s)\rangle\d s,
	\end{align}
since $\langle\mathcal{B}(\boldsymbol{y}),\boldsymbol{y}\rangle=0$.  
The above convergences imply 
\begin{align}\label{utuxwc}
&\mu\int_0^{t_1}\|\nabla\boldsymbol{y}(s)\|_{\mathbb{H}}^2\d s+\alpha\int_0^{t_1}\|\boldsymbol{y}(s)\|_{\mathbb{H}}^2\d s+\beta\int_0^{t_1}\|\boldsymbol{y}(s)\|_{\mathbb{L}^{r+1}_{\sigma}}^{r+1}\d s-\int_0^{t_1}\langle\boldsymbol{f}(s),\boldsymbol{y}(s)\rangle\d s\nonumber\\&=-\lim_{h\to 0}(\boldsymbol{y}(t_1),\boldsymbol{y}^h(t_1))+\lim_{h\to 0}(\boldsymbol{y}(0),\boldsymbol{y}^h(0)).
\end{align}
Since $\boldsymbol{y}$ is $\mathrm{L}^2$-weakly continuous in time and $\int_0^h\eta_h(s)\,\mathrm{d}s=\frac{1}{2}$, we obtain
\begin{align}\label{ut1wc}
	(\boldsymbol{y}(t_1),\boldsymbol{y}_n^h(t_1)) &=\int_0^{t_1}\eta_h(s)(\boldsymbol{y}(t_1),\boldsymbol{y}(t_1-s))\,\mathrm{d}s\nonumber\\
	&=\frac{1}{2}\|\boldsymbol{y}(t_1)\|_{\mathbb{H}}^2+\int_0^h\eta_h(s)\big(\boldsymbol{y}(t_1),\boldsymbol{y}(t_1-s)-\boldsymbol{y}(t_1)\big)\,\mathrm{d}s\nonumber\\
	&\to \frac{1}{2}\|\boldsymbol{y}(t_1)\|_{\mathbb{H}}^2,
\end{align}
as $h\to 0$. An analogous computation gives
\begin{align}\label{ut0wc}
	(\boldsymbol{y}(0),\boldsymbol{y}_n^h(0))\to\frac{1}{2}\|\boldsymbol{y}(0)\|_{\mathbb{H}}^2 \quad \text{as } h\to 0.
\end{align}
Substituting the limits \eqref{ut1wc}-\eqref{ut0wc} into \eqref{utuxwc}, we arrive at the energy equality
\begin{align*}
	&\frac{1}{2}\|\boldsymbol{y}(t_1)\|_{\mathbb{H}}^2+\mu\int_0^{t_1}\|\nabla\boldsymbol{y}(s)\|_{\mathbb{H}}^2\,\mathrm{d}s+\alpha\int_0^{t_1}\|\boldsymbol{y}(s)\|_{\mathbb{H}}^2\,\mathrm{d}s+\beta\int_0^{t_1}\|\boldsymbol{y}(s)\|_{\mathbb{L}^{r+1}_{\sigma}}^{r+1}\,\mathrm{d}s\nonumber\\
	&=\frac{1}{2}\|\boldsymbol{y}(0)\|_{\mathbb{H}}^2+\int_0^{t_1}\langle\boldsymbol{f}(s),\boldsymbol{y}(s)\rangle\,\mathrm{d}s,
\end{align*}
valid for every $t_1\in(0,T)$. This shows that every Leray-Hopf weak solution of \eqref{abstract-CBF} satisfies the energy equality \eqref{energy-equality}.

Let us now show that every weak solution of the CBF equations \eqref{abstract-CBF} possesses time-continuous trajectories in the $\mathbb{L}^2$-space, that is,
\begin{align}\label{eqn-conv-17}
	\|\boldsymbol{y}(t)-\boldsymbol{y}(t_0)\|_{\mathbb{H}}\to 0\ \text{ as }\ t\to t_0,
\end{align}
for all $t\in[0,T)$. From Remark~\ref{rem-main}, we infer
\begin{align}\label{eqn-weak-con}
	\boldsymbol{y}(t)\xrightarrow{w}	\boldsymbol{y}(t_0)\ \text{ as }\ t\to t_0,
\end{align}
for all $t\in[0,T)$. To upgrade this to convergence of the norms, it suffices to invoke the energy inequality \eqref{energy-inequality}. Indeed, \eqref{energy-inequality} immediately gives
\begin{align*}
	\limsup_{t\to t_0}\|\boldsymbol{y}(t)\|_{\mathbb{H}}^2\leq\|\boldsymbol{y}(t_0)\|_{\mathbb{H}}^2,
\end{align*}
while the weak continuity \eqref{eqn-weak-con} yields
\begin{align*}
	\|\boldsymbol{y}(t_0)\|_{\mathbb{H}}^2\leq 	\liminf_{t\to t_0}\|\boldsymbol{y}(t)\|_{\mathbb{H}}^2.
\end{align*}
Combining these two estimates, we conclude that every weak solution of \eqref{abstract-CBF} satisfies
\begin{align}\label{eqn-weak-strong}
	\boldsymbol{y}(t)\xrightarrow{w}	\boldsymbol{y}(t_0)\ \text{ and }\ \|\boldsymbol{y}(t)\|_{\mathbb{H}}\to \|\boldsymbol{y}(t_0)\|_{\mathbb{H}}\  \text{ as }\ t\to t_0.
\end{align}
The desired conclusion \eqref{eqn-conv-17} now follows directly from \eqref{eqn-weak-strong}: weak convergence together with convergence of the corresponding norms implies strong convergence in a Hilbert space, by the Radon-Riesz theorem (see \cite[Proposition 3.32, p. 78]{Brezis-2011}).
			\vskip 0.1 cm
		\noindent 
		\textbf{Step 3:} \emph{Uniqueness.}
	We next establish the uniqueness of Leray-Hopf weak solutions in both the critical and supercritical regimes. We consider  the following three cases. 
		\vskip 0.1cm
		\noindent 
		\textbf{Case 1:}  \emph{$d\in\{2,3\}$ and  $r>3$.} 
		Let us first consider the case $d\in\{2,3\}$ and $r>3$. Let $\boldsymbol{y}_1(\cdot)$ and $\boldsymbol{y}_2(\cdot)$ be two weak solutions of the system \eqref{abstract-CBF} satisfying \eqref{eqn-regularity}. Then $\boldsymbol{y}_1(\cdot)-\boldsymbol{y}_2(\cdot)$ also possesses the same regularity, and, arguing as in Step 2, one can show that the difference $\boldsymbol{y}_1(\cdot)-\boldsymbol{y}_2(\cdot)$ satisfies the following energy equality:
		\begin{align}\label{346}
			&\|\boldsymbol{y}_1(t)-\boldsymbol{y}_2(t)\|_{\mathbb{H}}^2+2\mu \int_0^t\|\nabla(\boldsymbol{y}_1(s)-\boldsymbol{y}_2(s))\|_{\mathbb{H}}^2\d s+2\alpha \int_0^t\| \boldsymbol{y}_1(s)-\boldsymbol{y}_2(s)\|_{\mathbb{H}}^2\d s\nonumber\\&=\|\boldsymbol{y}_1(0)-\boldsymbol{y}_2(0)\|_{\mathbb{H}}^2-2\int_0^t\langle\B(\boldsymbol{y}_1(s))-\mathrm{B}(\boldsymbol{y}_2(s)),\boldsymbol{y}_1(s)-\boldsymbol{y}_2(s)\rangle\d s\nonumber\\&\quad -2\beta\int_0^t\langle\mathcal{C}(\boldsymbol{y}_1(s))-\mathcal{C}_2(\boldsymbol{y}_2(s)),\boldsymbol{y}_1(s)-\boldsymbol{y}_2(s)\rangle\d s.
		\end{align}
		Note that $\langle\B(\boldsymbol{y}_1)-\B(\boldsymbol{y}_2),\boldsymbol{y}_1-\boldsymbol{y}_2\rangle=\langle\B(\boldsymbol{y}_1-\boldsymbol{y}_2,\boldsymbol{y}_2),\boldsymbol{y}_1-\boldsymbol{y}_2\rangle=-\langle\B(\boldsymbol{y}_1-\boldsymbol{y}_2,\boldsymbol{y}_1-\boldsymbol{y}_2),\boldsymbol{y}_2\rangle,$ since $\langle\B(\boldsymbol{y}_1,\boldsymbol{y}_1-\boldsymbol{y}_2),\boldsymbol{y}_1-\boldsymbol{y}_2\rangle=0$. 
		We estimate $	|\langle\B(\boldsymbol{y}_1-\boldsymbol{y}_2,\boldsymbol{y}_1-\boldsymbol{y}_2),\boldsymbol{y}_2\rangle|$ using H\"older's and Young's inequalities as 
		\begin{align}\label{2p28}
			|\langle\B(\boldsymbol{y}_1-\boldsymbol{y}_2,\boldsymbol{y}_1-\boldsymbol{y}_2),\boldsymbol{y}_2\rangle|&\leq\|\nabla(\boldsymbol{y}_1-\boldsymbol{y}_2)\|_{\mathbb{H}}\|\boldsymbol{y}_2(\boldsymbol{y}_1-\boldsymbol{y}_2)\|_{\mathbb{H}}\nonumber\\&\leq\frac{\mu }{2}\|\nabla(\boldsymbol{y}_1-\boldsymbol{y}_2)\|_{\mathbb{H}}^2+\frac{1}{2\mu }\|\boldsymbol{y}_2(\boldsymbol{y}_1-\boldsymbol{y}_2)\|_{\mathbb{H}}^2.
		\end{align}
		We take the term $\|\boldsymbol{y}_2(\boldsymbol{y}_1-\boldsymbol{y}_2)\|_{\mathbb{H}}^2$ from \eqref{2p28} and once again use H\"older's and Young's inequalities to estimate it as (see \cite{Gautam-2025,Hajduk-2017} also)
		\begin{align}\label{2.29}
			&\int_{\mathcal{O}}|\boldsymbol{y}_2(x)|^2|\boldsymbol{y}_1(x)-\boldsymbol{y}_2(x)|^2\d x\nonumber\\&=\int_{\mathcal{O}}|\boldsymbol{y}_2(x)|^2|\boldsymbol{y}_1(x)-\boldsymbol{y}_2(x)|^{\frac{4}{r-1}}|\boldsymbol{y}_1(x)-\boldsymbol{y}_2(x)|^{\frac{2(r-3)}{r-1}}\d x\nonumber\\&\leq\left(\int_{\mathcal{O}}|\boldsymbol{y}_2(x)|^{r-1}|\boldsymbol{y}_1(x)-\boldsymbol{y}_2(x)|^2\d x\right)^{\frac{2}{r-1}}\left(\int_{\mathcal{O}}|\boldsymbol{y}_1(x)-\boldsymbol{y}_2(x)|^2\d x\right)^{\frac{r-3}{r-1}}\nonumber\\&\leq{\beta\mu }\left(\int_{\mathcal{O}}|\boldsymbol{y}_2(x)|^{r-1}|\boldsymbol{y}_1(x)-\boldsymbol{y}_2(x)|^2\d x\right) +\varrho \left(\int_{\mathcal{O}}|\boldsymbol{y}_1(x)-\boldsymbol{y}_2(x)|^2\d x\right),
		\end{align}
		for $r>3$, 	where $\varrho=\frac{r-3}{2\mu(r-1)}\left(\frac{2}{\beta\mu (r-1)}\right)^{\frac{2}{r-3}}$. Using \eqref{2.29} in \eqref{2p28}, we deduce 
		\begin{align}\label{2.30}
			&|\langle\B(\boldsymbol{y}_1-\boldsymbol{y}_2,\boldsymbol{y}_1-\boldsymbol{y}_2),\boldsymbol{y}_2\rangle|\nonumber\\&\leq\frac{\mu }{2}\|\boldsymbol{y}_1-\boldsymbol{y}_2\|_{\mathbb{V}}^2+\frac{\beta}{2}\||\boldsymbol{y}_2|^{\frac{r-1}{2}}(\boldsymbol{y}_1-\boldsymbol{y}_2)\|_{\mathbb{H}}^2+\varrho\|\boldsymbol{y}_1-\boldsymbol{y}_2\|_{\mathbb{H}}^2.
		\end{align}
	 A calculation similar to \eqref{2.23} gives 
		\begin{align}\label{261}
			&\beta\langle\mathcal{C}(\boldsymbol{y}_1)-\mathcal{C}(\boldsymbol{y}_2),\boldsymbol{y}_1-\boldsymbol{y}_2\rangle\geq \frac{\beta}{2}\||\boldsymbol{y}_2|^{\frac{r-1}{2}}(\boldsymbol{y}_1-\boldsymbol{y}_2)\|_{\mathbb{H}}^2.
		\end{align}
		Using (\ref{2.30}) and (\ref{261}) in \eqref{346}, we obtain 
		\begin{align}\label{3.18}
			&\|\boldsymbol{y}_1(t)-\boldsymbol{y}_2(t)\|_{\mathbb{H}}^2+\mu \int_0^t\|\boldsymbol{y}_1(s)-\boldsymbol{y}_2(s)\|_{\mathbb{V}}^2\d s\nonumber\\&\leq\|\boldsymbol{y}_1(0)-\boldsymbol{y}_2(0)\|_{\mathbb{H}}^2+2\varrho\int_0^t\|\boldsymbol{y}_1(s)-\boldsymbol{y}_2(s)\|_{\mathbb{H}}^2\d s.
		\end{align}
		Applying Gr\"onwall's inequality in (\ref{3.18}), we arrive at 
		\begin{align}\label{269}
			\|\boldsymbol{y}_1(t)-\boldsymbol{y}_2(t)\|_{\mathbb{H}}^2&\leq \|\boldsymbol{y}_1(0)-\boldsymbol{y}_2(0)\|_{\mathbb{H}}^2e^{2\varrho T},
		\end{align}
		and hence the uniqueness follows by taking $\boldsymbol{y}_1(0)=\boldsymbol{y}_2(0)$ in \eqref{269}.
		
			\vskip 0.1cm
		\noindent 
		\textbf{Case 2:}  \emph{$d=2$ and  $r\in[1,3]$.} 
		For $d=2$ and $r\in[1,3]$, since $\boldsymbol{u}\in \mathrm{L}^{\infty}(0,T;\mathbb{H})\cap \mathrm{L}^2(0,T;\mathbb{V})$, an application of the Ladyzhenskaya inequality yields $\boldsymbol{u}\in \mathrm{L}^4(0,T; \mathbb{L}_{\sigma}^4)$. Combining this with \cite[Theorem 4.1, pp. 23]{Galdi-2019}, we conclude that $\boldsymbol{u}$ satisfies the energy equality \eqref{energy-equality}; consequently, $\boldsymbol{u}\in \mathrm{C}([0,T];\mathbb{H})$ as well. Since $\boldsymbol{y}\in \mathrm{L}^2(0,T;\mathbb{V})$ and $\partial_t\boldsymbol{y}\in \mathrm{L}^2(0,T;\mathbb{V}'),$ one can even apply the classical Lions-Magenes lemma ((\cite[Theorem 3.1, Chapter 1]{Lions-1972}, \cite[Lemma 1.2, Chapter 3]{Temam-1984})) to obtain this result. 
		
		In order to prove the uniqueness of Leray-Hopf weak solutions, we estimate $\langle\B(\boldsymbol{y}_1-\boldsymbol{y}_2,\boldsymbol{y}_2),\boldsymbol{y}_1-\boldsymbol{y}_2\rangle$ from \eqref{346} as
		\begin{align}\label{3711}
			|\langle\B(\boldsymbol{y}_1-\boldsymbol{y}_2,\boldsymbol{y}_2),\boldsymbol{y}_1-\boldsymbol{y}_2\rangle|&=|\langle\B(\boldsymbol{y}_1-\boldsymbol{y}_2,\boldsymbol{y}_1-\boldsymbol{y}_2),\boldsymbol{y}_2\rangle|\nonumber\\&\leq\|\boldsymbol{y}_2\|_{\mathbb{L}_{\sigma}^4}\|\nabla(\boldsymbol{y}_1-\boldsymbol{y}_2)\|_{\mathbb{H}}\|\boldsymbol{y}_1-\boldsymbol{y}_2\|_{\mathbb{L}_{\sigma}^4}\nonumber\\&\leq 2^{1/4}\|\boldsymbol{y}_2\|_{\mathbb{L}_{\sigma}^4}\|\nabla(\boldsymbol{y}_1-\boldsymbol{y}_2)\|_{\mathbb{H}}^{3/2}\|\boldsymbol{y}_1-\boldsymbol{y}_2\|_{\mathbb{H}}^{1/2}\nonumber\\&\leq\frac{\mu}{2}\|\nabla(\boldsymbol{y}_1-\boldsymbol{y}_2)\|_{\mathbb{H}}^2+\frac{27}{32\mu^3}\|\boldsymbol{y}_2\|_{\mathbb{L}_{\sigma}^4}^4\|\boldsymbol{y}_1-\boldsymbol{y}_2\|_{\mathbb{H}}^2. 
		\end{align}
		Using \eqref{3711} in \eqref{346}, we arrive at 
		\begin{align}\label{372}
			&\|\boldsymbol{y}_1(t)-\boldsymbol{y}_2(t)\|_{\mathbb{H}}^2+\mu \int_0^t\|\nabla(\boldsymbol{y}_1(s)-\boldsymbol{y}_2(s))\|_{\mathbb{H}}^2\d s+\frac{\beta}{2^{r-1}}\int_0^t\|\boldsymbol{y}_1(s)-\boldsymbol{y}_2(s)\|_{\mathbb{L}_{\sigma}^{r+1}}^{r+1}\d s \nonumber\\&\leq\|\boldsymbol{y}_1(0)-\boldsymbol{y}_2(0)\|_{\mathbb{H}}^2+\frac{27}{16\mu^3}\int_0^t\|\boldsymbol{y}_2(s)\|_{\mathbb{L}_{\sigma}^4}^4\|\boldsymbol{y}_1(s)-\boldsymbol{y}_2(s)\|_{\mathbb{H}}^2\d s.
		\end{align}
		An application of Gr\"onwall's inequality in \eqref{372} yields 
		\begin{align}
			\|\boldsymbol{y}_1(t)-\boldsymbol{y}_2(t)\|_{\mathbb{H}}^2\leq \|\boldsymbol{y}_1(0)-\boldsymbol{y}_2(0)\|_{\mathbb{H}}^2\exp\left\{\frac{27}{16\mu^3}\int_0^T\|\boldsymbol{y}_2(t)\|_{\mathbb{L}_{\sigma}^4}^4\d t\right\}, 
		\end{align}
		and the uniqueness $\boldsymbol{y}_1(t)=\boldsymbol{y}_2(t)$ for all $t\in[0,T]$ in $\mathbb{H}$ follows since  $\boldsymbol{y}\in \mathrm{L}^4(0,T; \mathbb{L}_{\sigma}^4)$. 
		
			\vskip 0.1cm
		\noindent 
		\textbf{Case 3:}  \emph{$d=r=3$.} 
		For the case $d=r=3$, we estimate  $\langle\B(\boldsymbol{y}_1)-\B(\boldsymbol{y}_2),\boldsymbol{y}_1-\boldsymbol{y}_2\rangle$ as  (\cite[pp. 154]{Kim-2021})
		\begin{align}
			\langle\B(\boldsymbol{y}_1)-\B(\boldsymbol{y}_2),\boldsymbol{y}_1-\boldsymbol{y}_2\rangle& =-\langle\B(\boldsymbol{y}_1-\boldsymbol{y}_2,\boldsymbol{y}_1),\boldsymbol{y}_1-\boldsymbol{y}_2\rangle\nonumber\\& =\langle\B(\boldsymbol{y}_1-\boldsymbol{y}_2,\boldsymbol{y}_1-\boldsymbol{y}_2),\boldsymbol{y}_1\rangle \nonumber\\& =\langle\B(\boldsymbol{y}_1-\boldsymbol{y}_2,\boldsymbol{y}_1-\boldsymbol{y}_2),\boldsymbol{y}_2\rangle \nonumber\\& =\frac{1}{2} \langle\B(\boldsymbol{y}_1-\boldsymbol{y}_2,\boldsymbol{y}_1-\boldsymbol{y}_2),\boldsymbol{y}_1+\boldsymbol{y}_2\rangle. 
		\end{align}
		Using H\"older's and Young's inequalities, we find 
		\begin{align}\label{261-0}
			|\langle\B(\boldsymbol{y}_1)-\B(\boldsymbol{y}_2),\boldsymbol{y}_1
			-\boldsymbol{y}_2\rangle|
			&\leq\frac{1}{2} \left|\langle\B(\boldsymbol{y}_1-\boldsymbol{y}_2,\boldsymbol{y}_1-\boldsymbol{y}_2),\boldsymbol{y}_1+\boldsymbol{y}_2\rangle\right|
			\nonumber\\&\leq
			\frac12\|\nabla(\boldsymbol{y}_1-\boldsymbol{y}_2)\|_{\mathbb{H}}\|(\boldsymbol{y}_1-\boldsymbol{y}_2)(\boldsymbol{y}_1+\boldsymbol{y}_2)\|_{\mathbb{H}}\nonumber\\&\leq \theta\mu\|\nabla(\boldsymbol{y}_1-\boldsymbol{y}_2)\|_{\mathbb{H}}^2+\frac{1}{16\theta\mu} \|(\boldsymbol{y}_1-\boldsymbol{y}_2)|\boldsymbol{y}_1+\boldsymbol{y}_2|\|_{\mathbb{H}}^2 
			\nonumber\\&\leq  \theta\mu\|\nabla(\boldsymbol{y}_1-\boldsymbol{y}_2)\|_{\mathbb{H}}^2+\frac{1}{8\theta\mu} \||\boldsymbol{y}_1|(\boldsymbol{y}_1-\boldsymbol{y}_2)\|_{\mathbb{H}}^2 \nonumber\\&\quad+\frac{1}{8\theta\mu} \||\boldsymbol{y}_2|(\boldsymbol{y}_1-\boldsymbol{y}_2)\|_{\mathbb{H}}^2,
		\end{align}
	for some $0<\theta\leq 1$.	We infer from   \eqref{2.23} that
		\begin{align}\label{261-1}
			&\beta\langle\mathcal{C}(\boldsymbol{y}_1)-\mathcal{C}(\boldsymbol{y}_2),\boldsymbol{y}_1-\boldsymbol{y}_2\rangle\geq \frac{\beta}{2}\||\boldsymbol{y}_1|(\boldsymbol{y}_1-\boldsymbol{y}_2)\|_{\mathbb{H}}^2 + \frac{\beta}{2}\||\boldsymbol{y}_2|(\boldsymbol{y}_1-\boldsymbol{y}_2)\|_{\mathbb{H}}^2.
		\end{align}
		Using \eqref{261-0} and \eqref{261-1} in \eqref{346}, we arrive at 
		\begin{align}\label{372-1}
			&\|\boldsymbol{y}_1(t)-\boldsymbol{y}_2(t)\|_{\mathbb{H}}^2+2(1-\theta)\mu \int_0^t\|\nabla(\boldsymbol{y}_1(s)-\boldsymbol{y}_2(s))\|_{\mathbb{H}}^2\d s\nonumber\\&\quad+\left(\beta-\frac{1}{4\theta\mu}\right)\left[\int_0^t\||\boldsymbol{y}_1(s)|(\boldsymbol{y}_1(s)-\boldsymbol{y}_2(s))\|_{\mathbb{H}}^2 \d s+\int_0^t\||\boldsymbol{y}_1(s)|(\boldsymbol{y}_1(s)-\boldsymbol{y}_2(s))\|_{\mathbb{H}}^2 \d s\right]\nonumber\\&\leq\|\boldsymbol{y}_1(0)-\boldsymbol{y}_2(0)\|_{\mathbb{H}}^2,
		\end{align}
		for all $t\in[0,T]$. Therefore, for $4\beta\mu\geq 1$, the uniqueness follows for $d=r=3$. 
	\end{proof}

		\section{Simultaneous approximation and energy equality for CBF equations in unbounded domains}\setcounter{equation}{0}\label{sec-4} 
		In this section, we extend the approximation result from $\mathbb{R}^d$ to general unbounded domains $\Omega\subset\mathbb{R}^d$, $d\geq 2$, with nonempty boundary and of uniform $\mathrm{C}^{1,1}$-type. Our construction is based on the resolvent of the Stokes operator and yields simultaneous approximation in the relevant Sobolev and Lebesgue spaces. We next establish an independent version of the generalized Lions-Magenes lemma. Combining this result with the simultaneous approximation theorem, we prove the energy equality for Leray-Hopf weak solutions of the CBF equations on general unbounded domains.
		
			\subsection{General unbounded domains} 
		Our next aim is to provide the simultaneous approximation result on general unbounded domains on $\mathbb{R}^d$ by using the Stokes resolvent problem given in \cite{Farwig-2005,Farwig-2009}. 
		
		\begin{theorem}[Resolvent approximation in $\mathbb{V}\cap \mathbb{L}^p_\sigma(\Omega)$]
			\label{main-2}
			Let $\Omega\subset\mathbb{R}^d$, $d\geq 2,$ be a uniform $\mathrm{C}^{1,1}$-domain and let
			$\mathcal{L}$ denote the Stokes operator on $\mathbb{H}= \mathbb{L}^2_\sigma(\Omega)$. Let us set 
			$
			\mathbb{V}:=\mathbb{H}^1_{0,\sigma}(\Omega)=\D(\mathcal{L}^{1/2})
			$
			and for $2<p<\infty,$  define
			$
			\widetilde{\mathbb{L}}^p_\sigma(\Omega)
			:=
			\mathbb{L}^p_\sigma(\Omega)\cap \mathbb{L}^2_\sigma(\Omega).
			$
			Let $\widetilde{\mathcal{A}}_p$ denote the realization of $\mathcal{L}$ on
			$\widetilde{\mathbb L}^p_\sigma(\Omega)$. Assume that the resolvents of $\widetilde{\mathcal{A}}_p$ and $\mathcal{L}$ are consistent, that is,
			\begin{align*}
			(\mathrm{I}+\lambda \mathcal{L})^{-1}\boldsymbol{f}
			=
			(\mathrm{I}+\lambda\widetilde{\mathcal{A}}_p)^{-1}\boldsymbol{f},
			\ 
			\boldsymbol{f}\in\widetilde{\mathbb{L}}^p_\sigma(\Omega),\  \lambda>0.
			\end{align*}
			For each $n\in\mathbb{N}$, define
			\begin{align*}
			\mathcal{P}_n:=(\mathrm{I}+n^{-1}\mathcal{L})^{-1}.
			\end{align*}
			Then, for every
			$
			\boldsymbol{y}\in \mathbb{V}\cap \mathbb{L}^p_\sigma(\Omega),
			$ we have 
			\begin{align*}
			\mathcal{P}_n \boldsymbol{y} \in \D(\mathcal{L})
			\ \text{ and } \ 
			\sup_{n\ge1}
			\|\mathcal{P}_n\|_{\mathfrak{L}(\widetilde{\mathbb{L}}^p_\sigma(\Omega))}
			<\infty.
			\end{align*}
			Moreover
			\begin{align}\label{eqn-bound-0}
				\|\mathcal{P}_n\boldsymbol{y}-\boldsymbol{y}\|_{\mathbb{V}}
				+
				\|\mathcal{P}_n\boldsymbol{y}-\boldsymbol{y}\|_{\mathbb{L}^p(\Omega)}
				\to 0\ \text{ as }\ n\to\infty. 
			\end{align}
		\end{theorem}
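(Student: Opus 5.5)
The argument will treat the two convergences in \eqref{eqn-bound-0} separately. The $\mathbb{V}$-part will come from the self-adjoint functional calculus of $\mathcal{L}$ on $\mathbb{H}$. The $\mathbb{L}^p$-part will come from the sectoriality of $\varepsilon\mathrm{I}+\widetilde{\mathcal{A}}_p$ in \cite[Theorem 1.3]{Farwig-2009}, via a uniform bound on $\mathcal{P}_n$ combined with a density argument.

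\textbf{Step 1 (range and $\mathbb{V}$-convergence).} Since $\mathcal{L}$ is nonnegative and self-adjoint on $\mathbb{H}$, the operator $\mathrm{I}+n^{-1}\mathcal{L}$ is invertible. Hence $\mathcal{P}_n$ maps $\mathbb{H}$ onto $\D(\mathcal{L})$, so $\mathcal{P}_n\boldsymbol{y}\in\D(\mathcal{L})$. By the spectral theorem, $\mathcal{P}_n=\phi_n(\mathcal{L})$ with $\phi_n(\lambda)=(1+\lambda/n)^{-1}\in(0,1]$ and $\phi_n(\lambda)\to1$ pointwise. For $\boldsymbol{y}\in\mathbb{V}=\D(\mathcal{L}^{1/2})$, the operator $\mathcal{P}_n$ commutes with $\mathcal{L}^{1/2}$, so
\[
\|(\mathrm{I}+\mathcal{L})^{1/2}(\mathcal{P}_n\boldsymbol{y}-\boldsymbol{y})\|_{\mathbb{H}}^2=\int_{[0,\infty)}(1+\lambda)\,|1-\phi_n(\lambda)|^2\,\d E_\lambda(\boldsymbol{y},\boldsymbol{y}).
\]
This tends to $0$ by dominated convergence, with dominating function $(1+\lambda)$, which is integrable against $\d E_\lambda(\boldsymbol{y},\boldsymbol{y})$ because $\boldsymbol{y}\in\D(\mathcal{L}^{1/2})$. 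Since $\|(\mathrm{I}+\mathcal{L})^{1/2}\cdot\|_{\mathbb{H}}$ is equivalent to $\|\cdot\|_{\mathbb{V}}$ (using $\|\mathcal{L}^{1/2}\boldsymbol{v}\|_{\mathbb{H}}=\|\nabla\boldsymbol{v}\|_{\mathbb{H}}$), the $\mathbb{V}$-convergence follows.

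\textbf{Step 2 (uniform bound on $\widetilde{\mathbb{L}}^p_\sigma$).} By the assumed consistency, $\mathcal{P}_n\boldsymbol{f}=n(n\mathrm{I}+\widetilde{\mathcal{A}}_p)^{-1}\boldsymbol{f}$ for $\boldsymbol{f}\in\widetilde{\mathbb{L}}^p_\sigma$. Fix $\varepsilon=1$ and write
\[
n\mathrm{I}+\widetilde{\mathcal{A}}_p=(n-1)\mathrm{I}+(\mathrm{I}+\widetilde{\mathcal{A}}_p).
\]
Since $\mathrm{I}+\widetilde{\mathcal{A}}_p$ is sectorial of type $0$, its resolvent bound at $\lambda=-(n-1)$, for $n\geq2$, gives
\[
\|(n-1)(n\mathrm{I}+\widetilde{\mathcal{A}}_p)^{-1}\|_{\mathfrak{L}(\widetilde{\mathbb{L}}^p_\sigma)}\le M.
\]
Multiplying by $n/(n-1)\le2$ yields $\sup_{n\geq2}\|\mathcal{P}_n\|\le 2M$. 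For $n=1$, the point $-1$ lies in the resolvent set of $\mathrm{I}+\widetilde{\mathcal{A}}_p$ (it is the point $0$ of the resolvent of $\widetilde{\mathcal{A}}_p$ shifted), so $\|\mathcal{P}_1\|<\infty$ as well. Alternatively, one can apply the sectoriality of $\varepsilon\mathrm{I}+\widetilde{\mathcal{A}}_p$ with $\varepsilon=1/2$, which covers all $n\ge1$ at once.

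\textbf{Step 3 ($\mathbb{L}^p$-convergence).} Since $\boldsymbol{y}\in\mathbb{V}\cap\mathbb{L}^p_\sigma\subset\widetilde{\mathbb{L}}^p_\sigma$, it suffices to show $\mathcal{P}_n\boldsymbol{f}\to\boldsymbol{f}$ in $\widetilde{\mathbb{L}}^p_\sigma$ for every $\boldsymbol{f}$. By the uniform bound of Step 2 and an $\varepsilon/3$ argument, it is enough to check this on a dense subset. On $\D(\widetilde{\mathcal{A}}_p)$ we have
\[
\mathcal{P}_n\boldsymbol{f}-\boldsymbol{f}=-n^{-1}\mathcal{P}_n\widetilde{\mathcal{A}}_p\boldsymbol{f},
\]
whose norm is at most $n^{-1}\sup_k\|\mathcal{P}_k\|\,\|\widetilde{\mathcal{A}}_p\boldsymbol{f}\|\to0$. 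The domain $\D(\widetilde{\mathcal{A}}_p)$ is dense in $\widetilde{\mathbb{L}}^p_\sigma$ because the operator generates an analytic semigroup (equivalently, because sectorial operators are densely defined by Definition~\ref{def-sect}). Since the $\widetilde{\mathbb{L}}^p_\sigma$-norm dominates the $\mathbb{L}^p$-norm, this gives the $\mathbb{L}^p$-part of \eqref{eqn-bound-0}.

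The main obstacle is Step 2. One must correctly translate the sectoriality of the shifted operator, which is only available for $\varepsilon>0$ because $\widetilde{\mathcal{A}}_p$ itself need not be invertible on unbounded domains, into a bound for $\lambda(\lambda\mathrm{I}+\widetilde{\mathcal{A}}_p)^{-1}$ that is uniform as $\lambda=n\to\infty$. One must also rely on the consistency hypothesis, so that the $\mathbb{H}$-resolvent used in Step 1 and the $\widetilde{\mathbb{L}}^p_\sigma$-resolvent used in Steps 2--3 produce the same $\mathcal{P}_n\boldsymbol{y}$.
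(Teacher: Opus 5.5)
Your proposal is correct and follows essentially the paper's route: the spectral calculus of $\mathcal{L}$ gives the $\mathbb{V}$-convergence, sectoriality of the shifted operator gives a uniform bound on $n(n+\widetilde{\mathcal{A}}_p)^{-1}$, and the identity $\mathcal{P}_n\boldsymbol{f}-\boldsymbol{f}=-n^{-1}\mathcal{P}_n\widetilde{\mathcal{A}}_p\boldsymbol{f}$ on $\D(\widetilde{\mathcal{A}}_p)$ plus density gives the $\mathbb{L}^p$-convergence. One small slip is the case $n=1$: there you need $0\in\rho(\mathrm{I}+\widetilde{\mathcal{A}}_p)$, i.e.\ $-1\in\rho(\widetilde{\mathcal{A}}_p)$, not $-1\in\rho(\mathrm{I}+\widetilde{\mathcal{A}}_p)$, and this does not follow from type-$0$ sectoriality of $\mathrm{I}+\widetilde{\mathcal{A}}_p$ alone since $0\in\Sigma_0$; your alternative with $\varepsilon=1/2$, which is exactly the paper's choice $\mu_n=n-\tfrac12$, settles it.
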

		
		\begin{proof}
			We divide the proof into two steps.
			
			\vskip 0.1cm 
			\noindent
			\textbf{Step 1:} \emph{Uniform boundedness and convergence in
				$\widetilde{\mathbb{L}}^p_\sigma(\Omega)$.}
			From \cite[Theorem 1.4]{Farwig-2009} (see also \cite[Theorem 2.2]{Kunstmann-2008}), we infer that the operator $\varepsilon+\widetilde{\mathcal{A}}_p$ is sectorial of type $0,$ for every $\varepsilon>0$. Therefore, we immediately have 
			$$\mathcal{T}_p:=\frac{1}{2}+\widetilde{\mathcal{A}}_p$$
			is sectorial of type \(0\) and 
			\begin{align*}
			\sigma(\mathcal{T}_p)\subseteq\Sigma_0=[0,\infty),
			\end{align*}
			and, for every $\theta\in(0,\pi)$, there exists $C_{p,\theta}>0$ such that
			\begin{align*}
			\sup_{\lambda\notin\Sigma_\theta}
			\left\|
			\lambda(\lambda-\mathcal{T}_p)^{-1}
			\right\|_{\mathfrak{L}(\widetilde{\mathbb{L}}^p_\sigma(\Omega))}
			\le C_{p,\theta}.
			\end{align*}
			We fix $\theta=\pi/2$. Since
			$\Sigma_{\pi/2}=
			\left\{
			re^{i\varphi}:r\geq0,\ |\varphi|\leq\frac{\pi}{2}
			\right\}
			$ (see Definition \ref{def-sect}), 
			every negative real number lies outside $\Sigma_{\pi/2}$. Indeed, for
			$\mu>0$, $-\mu=\mu e^{i\pi},$ so that $|\arg(-\mu)|=\pi>\frac{\pi}{2}.$
			Hence, by the sectorial resolvent estimate, we have 
			\begin{align*}
			\left\|
			(-\mu)(-\mu-\mathcal{T}_p)^{-1}
			\right\|_{\mathcal L(\widetilde{\mathbb{L}}^p_\sigma(\Omega))}
			\leq C_{p,\pi/2},
			\  \mu>0.
			\end{align*}
			Moreover,
			$-\mu-\mathcal{T}_p=-(\mu+\mathcal{T}_p),$
			and therefore $(-\mu-\mathcal{T}_p)^{-1}=-(\mu+\mathcal{T}_p)^{-1}.$
			Consequently, we have 
			$(-\mu)(-\mu-\mathcal{T}_p)^{-1}=\mu(\mu+\mathcal{T}_p)^{-1}.$
			It follows that
			\begin{align}\label{eqn-bound-1}
				\sup_{\mu>0}
				\left\|
				\mu(\mu+\mathcal{T}_p)^{-1}
				\right\|_{\mathfrak{L}(\widetilde{\mathbb{L}}^p_\sigma(\Omega))}
				\leq C_p,
			\end{align}
			where 
			$C_p=C_{p,\pi/2}.$  For $n\ge1$, let us set
			$\mu_n:=n-\frac12.$
			Then, we have  
			\begin{align*}
			\mu_n\ge\frac12\ \text{ and	}\ n+\widetilde{\mathcal{A}}_p=\mu_n+\mathcal{T}_p.
			\end{align*}
			Since
			$\frac{n}{n-\frac12}\le2,	\  n\geq1 $, one gets 
			\begin{align*}
				\left\|
				n(n+\widetilde{\mathcal{A}}_p)^{-1}
				\right\|_{\mathfrak{L}(\widetilde{\mathbb{L}}^p_\sigma(\Omega))}
				&=
				\frac{n}{\mu_n}
				\left\|
				\mu_n(\mu_n+\mathcal{T}_p)^{-1}
				\right\|_{\mathfrak{L}(\widetilde{\mathbb{L}}^p_\sigma(\Omega))}
				\le 2C_p,
			\end{align*}
			where we have used \eqref{eqn-bound-1}. Therefore, it is immediate that 
			\begin{align}\label{eqn-bound-2}
				\sup_{n\ge1}
				\left\|
				n(n+\widetilde{\mathcal{A}}_p)^{-1}
				\right\|_{\mathfrak{L}(\widetilde{\mathbb{L}}^p_\sigma(\Omega))}
				\le2C_p.
			\end{align}
			By the assumed consistency of the $\mathbb{L}^2_\sigma$-and
			$\widetilde{\mathbb{L}}^p_\sigma$-realizations, for every
			$\boldsymbol{f}\in\widetilde{\mathbb{L}}^p_\sigma(\Omega)$ and every $\lambda>0$, we have (\cite[Theorem 1.3]{Farwig-2009})
			\begin{align*}
			(\mathrm{I}+\lambda\mathcal{L})^{-1}\boldsymbol{f}=
			(\mathrm{I}+\lambda\widetilde{\mathcal{A}}_p)^{-1}\boldsymbol{f}.
			\end{align*}
			In particular, with $\lambda=n^{-1}$, we define 
			\begin{align}\label{eqn-bound-3}
				\mathcal{P}_n\boldsymbol{f}
				=
				(\mathrm{I}+n^{-1}\mathcal{L})^{-1}\boldsymbol{f}
				=
				(\mathrm{I}+n^{-1}\widetilde{\mathcal{A}}_p)^{-1}\boldsymbol{f}
				=
				n(n+\widetilde{\mathcal{A}}_p)^{-1}\boldsymbol{f}.
			\end{align}
			We infer from \eqref{eqn-bound-2} that 
			\begin{align}\label{eqn-bound-4}
				\sup_{n\ge1}
				\|\mathcal{P}_n\|_{\mathfrak{L}(\widetilde{\mathbb{L}}^p_\sigma(\Omega))}
				\le2C_p.
			\end{align}
			Let us now show  that as $n\to\infty$ 
			\begin{align}\label{eqn-bound-5}
				\mathcal{P}_n\boldsymbol{f}\to \boldsymbol{f} 
				\ \text{ in }\ \widetilde{\mathbb{L}}^p_\sigma(\Omega),
			\end{align}
			for every $\boldsymbol{f}\in\widetilde{\mathbb{L}}^p_\sigma(\Omega)$. First let $\boldsymbol{f}\in\mathrm{D}(\widetilde{\mathcal{A}}_p)$. Then, we have 
			\begin{align*}
				\mathcal{P}_n\boldsymbol{f}-\boldsymbol{f}
				=
				\left[n(n+\widetilde{\mathcal{A}}_p)^{-1}-\mathrm{I}\right]\boldsymbol{f}=
				-(n+\widetilde{\mathcal{A}}_p)^{-1}\widetilde{\mathcal{A}}_p \boldsymbol{f}=
				-\frac1n\mathcal{P}_n\widetilde{\mathcal{A}}_p \boldsymbol{f}.
			\end{align*}
			Hence, by using \eqref{eqn-bound-4}, we obtain 
			\begin{align}\label{eqn-bound-6}
				\|\mathcal{P}_n\boldsymbol{f}-\boldsymbol{f}\|_{\widetilde{\mathbb{L}}^p}
				\leq
				\frac{2C_p}{n}\|\widetilde{\mathcal{A}}_p \boldsymbol{f}\|_{\widetilde{\mathbb{L}}^p}
				\to 0\  \text{ as } \ n\to\infty,
			\end{align}
			for every $\boldsymbol{f}\in\mathrm{D}(\widetilde{\mathcal{A}}_p)$. 
			Since $-\widetilde{\mathcal{A}}_p$ generates a $\mathrm{C}_0$-semigroup on
			$\widetilde{\mathbb{L}}^p_\sigma(\Omega)$, its generator $\widetilde{\mathcal{A}}_p$ is
			densely defined (\cite[Theorem 1.3]{Farwig-2009}). Thus $\mathrm{D}(\widetilde{\mathcal{A}}_p)$ is dense in
			$\widetilde{\mathbb{L}}^p_\sigma(\Omega)$. Let
			$\boldsymbol{f}\in\widetilde{\mathbb{L}}^p_\sigma(\Omega)$ and choose
			$\boldsymbol{f}_k\in\mathrm{D}(\widetilde{\mathcal{A}}_p)$ such that
			\begin{align}\label{eqn-bound-7}
				\|\boldsymbol{f}_k-\boldsymbol{f}\|_{\widetilde{\mathbb{L}}^p}\to 0\ 
				\text{ as }\ k\to\infty. 
			\end{align}
			For every $n,k$, using \eqref{eqn-bound-4}, we find 
			\begin{align}\label{eqn-bound-8}
				\|\mathcal{P}_n\boldsymbol{f}-\boldsymbol{f}\|_{\widetilde{\mathbb{L}}^p}
				&\le
				\|\mathcal{P}_n(\boldsymbol{f}-\boldsymbol{f}_k)\|_{\widetilde{\mathbb{L}}^p}
				+\|\mathcal{P}_n\boldsymbol{f}_k-\boldsymbol{f}_k\|_{\widetilde{\mathbb{L}}^p}
				+\|\boldsymbol{f}_k-\boldsymbol{f}\|_{\widetilde{\mathbb{L}}^p}\nonumber\\
				&\le
				(2C_p+1)\|\boldsymbol{f}-\boldsymbol{f}_k\|_{\widetilde{\mathbb{L}}^p}
				+\|\mathcal{P}_n\boldsymbol{f}_k-\boldsymbol{f}_k\|_{\widetilde{\mathbb{L}}^p}.
			\end{align}
			Using the convergence \eqref{eqn-bound-7},	given $\delta>0$, choose $k$ sufficiently large so that
			\begin{align*}
			(2C_p+1)\|\boldsymbol{f}-\boldsymbol{f}_k\|_{\widetilde{\mathbb{L}}^p}<\frac{\delta}{2}.
			\end{align*}
			For this fixed $k$, \eqref{eqn-bound-6} gives $N$ such that
			\begin{align*}
			\|\mathcal{P}_n\boldsymbol{f}_k-\boldsymbol{f}_k\|_{\widetilde{\mathbb{L}}^p}<\frac{\delta}{2},
			\ \text{ for all }\  n\ge N.
			\end{align*}
			Therefore, we infer from \eqref{eqn-bound-8} that 
			\begin{align*}
			\|\mathcal{P}_n\boldsymbol{f}-\boldsymbol{f}\|_{\widetilde{\mathbb{L}}^p}
			<\delta,
			\ \text{ for all }\  n\ge N,
			\end{align*}
			and the convergence \eqref{eqn-bound-5} follows. 
			
			Now let us consider 
			$
			\boldsymbol{y}\in \mathbb{V}\cap \mathbb{L}^p_\sigma(\Omega).
			$
			Since $\mathbb{V}=\D(\mathcal{L}^{1/2})\subset \mathbb{L}^2_\sigma(\Omega)$,
			we have
			$
			\boldsymbol{y}\in \mathbb{L}^2_\sigma(\Omega)\cap \mathbb{L}^p_\sigma(\Omega)
			=\widetilde{\mathbb{L}}^p_\sigma(\Omega).
			$
			Thus \eqref{eqn-bound-5} gives
			$
			\mathcal{P}_n\boldsymbol{y}\to \boldsymbol{y}
			\ \text{ in }\ \widetilde{\mathbb{L}}^p_\sigma(\Omega),
			$
			and in particular
			\begin{align}\label{eqn-bound-9}
				\mathcal{P}_n\boldsymbol{y}\to \boldsymbol{y}
				\ \text{ in } \ \mathbb{L}^p(\Omega)\ \text{ as }\ n\to\infty. 
			\end{align}
			
			\medskip
			\noindent
			\textbf{Step 2:}\emph{ Convergence in $\mathbb{V}$.}
			Since $\mathcal{L}$ is nonnegative and self-adjoint on
			$\mathbb{L}^2_\sigma(\Omega)$ and $\mathbb{V}=D(\mathcal{L}^{1/2})$, the famous result from the functional calculus called the spectral theorem (\cite[pp. 151]{Amrein-2009}, \cite[Theorem 5.7]{Schmudgen-2012}) gives
			\begin{align*}
			\mathcal{P}_n=(\mathrm{I}+n^{-1}\mathcal{L})^{-1}=\varphi_n(\mathcal{L}),
			\ \text{ where }\ 
			\varphi_n(\lambda):=\frac{1}{1+\lambda/n}.
			\end{align*}
			Let $\boldsymbol{y}\in \mathbb{V}$ and let $\mu_{\boldsymbol{y}}$ be the spectral measure of $\boldsymbol{y}$ associated with $\mathcal{L}$. Then, we have 
			\begin{equation*}
			\begin{aligned}
				\|\mathcal{L}^{1/2}(\mathcal{P}_n\boldsymbol{y}-\boldsymbol{y})\|_{\mathbb{L}^2(\Omega)}^2
				&=
				\int_{[0,\infty)}
				\lambda
				\left|
				\frac{1}{1+\lambda/n}-1
				\right|^2
				\d\mu_{\boldsymbol{y}}(\lambda).
			\end{aligned}
			\end{equation*}
			The integrand converges pointwise to zero and satisfies
			\begin{align*}
			0\leq\lambda\left|\frac{1}{1+\lambda/n}-1\right|^2\leq\lambda.
			\end{align*}
			Moreover, we have 
			\begin{align*}
			\int_{[0,\infty)}\lambda\,\d\mu_{\boldsymbol{y}}(\lambda)=
			\|\mathcal{L}^{1/2}\boldsymbol{y}\|_{\mathbb{L}^2(\Omega)}^2<\infty.
			\end{align*}
			Hence, by the Lebesgue dominated convergence theorem, we conclude 
			\begin{align}\label{eqn-bound-10}
				\mathcal{L}^{1/2}\mathcal{P}_n\boldsymbol{y}\to \mathcal{L}^{1/2}\boldsymbol{y}
				\ \text{ in }\ \mathbb{L}^2(\Omega).
			\end{align}
			In a similar way, we arrive at 
			\begin{align}\label{eqn-bound-11}
				\|\mathcal{P}_n\boldsymbol{y}-\boldsymbol{y}\|_{\mathbb{L}^2(\Omega)}^2
				=
				\int_{[0,\infty)}
				\left|
				\frac{1}{1+\lambda/n}-1
				\right|^2
				\d\mu_{\boldsymbol{y}}(\lambda)
				\to 0,
			\end{align}
			again by  the Lebesgue dominated convergence theorem, since the integrand is bounded by $1$.
			Consequently,
			\begin{align*}
			\|\mathcal{P}_n\boldsymbol{y}-\boldsymbol{y}\|_{\mathbb{L}^2(\Omega)}
			+
			\|\mathcal{L}^{1/2}(\mathcal{P}_n\boldsymbol{y}-\boldsymbol{y})\|_{\mathbb{L}^2(\Omega)}
			\to 0 \ \text{ as }\ n\to\infty. 
			\end{align*}
			Since the graph norm of $\mathcal{L}^{1/2}$ is equivalent to the $\mathbb{V}$-norm,
			we obtain
			\begin{align}\label{eqn-bound-12}
				\mathcal{P}_n\boldsymbol{y}\to  \boldsymbol{y}
				\ \ \text{ in }\ \mathbb{V}\ \text{ as }\ n\to\infty. 
			\end{align}
			Combining \eqref{eqn-bound-9} and \eqref{eqn-bound-12}, we conclude the proof of \eqref{eqn-bound-0}. 
		\end{proof}

		\begin{remark}
			One can obtain	\eqref{eqn-bound-1} by using \cite[Theorem 1.3]{Farwig-2009}, which states that for
			every $\varepsilon>0$, there exists a constant
			$M=M(\varepsilon,q,\alpha,\beta,K)$  such that
			\begin{align}\label{rem-1}
				\|e^{-t\widetilde{\mathcal{A}}_q}\boldsymbol{f}\|_{\widetilde{\mathbb{L}}^q}
				\leq
				M e^{\varepsilon t}\|\boldsymbol{f}\|_{\widetilde{\mathbb{L}}^q},
				\ 
				t>0,
			\end{align}
			where  $\alpha,\beta,K$ are the constants from Definition \ref{def-c11}.  Since $\mathcal{T}_p=\frac12+\widetilde{\mathcal{A}}_p$, the semigroup generated by $-\mathcal{T}_p$ is
			$e^{-t\mathcal{T}_p}=e^{-t/2}e^{-t\widetilde{\mathcal{A}}_p} $. Using \eqref{rem-1}, we find
			\begin{align}
				\|e^{-t\mathcal{T}_p}\|_{\mathfrak{L}(\widetilde{\mathbb{L}}^p_\sigma(\Omega))}
				= e^{-t/2}	\|e^{-t\widetilde{\mathcal{A}}_q}\|_{\mathfrak{L}(\widetilde{\mathbb{L}}^p_\sigma(\Omega))}
				\leq 	e^{-t/2}M e^{t/2}
				=
				M,
				\ t\geq0.
			\end{align}
			Hence the shifted semigroup $(e^{-t\mathcal{T}_p})_{t\geq0}$ is bounded. Consequently, for $\mu>0$ and $\boldsymbol{f}\in\widetilde{\mathbb{L}}^p_\sigma(\Omega)$, the
			Laplace transform representation of the resolvent gives
			\begin{align*}
				(\mu+\mathcal{T}_p)^{-1}\boldsymbol{f}
				=
				\int_0^\infty e^{-\mu t}e^{-t\mathcal{T}_p}\boldsymbol{f}\d t.
			\end{align*}
			Therefore, we have 
			\begin{align*}
				\left\|
				\mu(\mu+\mathcal{T}_p)^{-1}\boldsymbol{f}
				\right\|_{\widetilde{\mathbb{L}}^q}
				\leq
				\mu\int_0^\infty e^{-\mu t}
				\left\|e^{-t\mathcal{T}_p}\boldsymbol{f}\right\|_{\widetilde{\mathbb{L}}^q}\d t
				\leq
				M\mu\int_0^\infty e^{-\mu t}\left\|\boldsymbol{f}\right\|_{\widetilde{\mathbb{L}}^q}\d t=M\left\|\boldsymbol{f}\right\|_{\widetilde{\mathbb{L}}^q},
			\end{align*}
			which implies \eqref{eqn-bound-1}. 
		\end{remark}

		\begin{remark}
			One may naturally ask whether the compact operator $L$ constructed in \cite{Brzezniak-2013} can be used to obtain simultaneous approximations. We note that the results established in \cite[Lemma 2.4]{Brzezniak-2013} hold for functions in $
			\D(L)\subset \mathbb{U}\cong\D(L^{1/2})\subset \mathbb{V}_s\subset \mathbb{V}\cap \mathbb{L}^p_{\sigma}\subset \mathbb{H},$
			where
		$s>\max\left\{1,\frac{d(p-2)}{2p}\right\},$ and $\mathbb{V}_s$ denotes the closure of $\mathbb{C}_{0,\sigma}^{\infty}(\Omega)$ in $\mathbb{H}^s_0(\Omega)$. Moreover, the embedding $\mathbb{U}\hookrightarrow \mathbb{V}_s$ is compact. Although we are able to establish separately that $\D(L)$ is dense in $\mathbb{V}$ and in $\mathbb{L}^p_{\sigma}$, we have not been able to show that $\D(L)$ is dense in the intersection $\mathbb{V}\cap \mathbb{L}_{\sigma}^p$. Consequently, the compactness result for $L$ obtained in \cite{Brzezniak-2013} does not, by itself, immediately yield the simultaneous approximation required here. Similarly, the simultaneous  approximation results in \cite{Brzezniak-2019,Hornung-2018} rely on the spectral decomposition of a self-adjoint operator and are therefore restricted to bounded domains, where the relevant operator has a discrete spectrum.
		\end{remark}

\subsection{Generalized Lions-Magenes lemma}
	The aim of this section is to establish the energy equality for the
	CBF equations on general unbounded domains. Our approach combines a
	generalization of the classical Lions-Magenes lemma (\cite[Theorem 3.1, Chapter 1]{Lions-1972}, \cite[Lemma 1.2, Chapter 3]{Temam-1984}, \cite[Proposition 23.23]{Zeidler-1990}) with the 
	approximation result obtained in Theorem \ref{main-2}. Let us first establish a generalization of the classical Lions-Magenes
	lemma. Although this result was stated in \cite[Theorem 1.8, Chapter I]{Chepyzhov-2002} without assuming a Gelfand triple and without providing a proof, to the best of our knowledge, no proof of this result has appeared elsewhere in the literature. Therefore,  we provide a proof here for completeness.

Let $\mathcal{H}$ be a Hilbert space, and let $\mathcal{V}$ and
$\mathcal{E}$ be Banach spaces continuously embedded into $\mathcal{H}$.
Identifying $\mathcal{H}$ with its dual, assume that
\begin{align*}
\mathcal{V}\hookrightarrow\mathcal{H}\hookrightarrow\mathcal{V}',
\ 
\mathcal{E}\hookrightarrow\mathcal{H}\hookrightarrow\mathcal{E}'
\end{align*}
continuously. Define
\begin{align*}
\mathcal{X}:=\mathcal{V}\cap\mathcal{E},
\ 
\|\boldsymbol{y}\|_{\mathcal{X}}
:=
\|\boldsymbol{y}\|_{\mathcal{V}}
+
\|\boldsymbol{y}\|_{\mathcal{E}},
\end{align*}
and assume that \emph{$\mathcal{X}$ is dense in $\mathcal{H}$}. We identify
\begin{align*}
\mathcal{X}'
=
(\mathcal{V}\cap\mathcal{E})'
\cong
\mathcal{V}'+\mathcal{E}'
\end{align*}
in the usual intersection-sum sense (see Subsection \ref{fun-set}). Thus, if
\begin{align*}
\boldsymbol f=\boldsymbol f_1+\boldsymbol f_2,
\ 
\boldsymbol f_1\in\mathcal{V}',
\ 
\boldsymbol f_2\in\mathcal{E}',
\end{align*}
then
\begin{align*}
\langle\boldsymbol f,\boldsymbol{y}\rangle_{\mathcal{X}',\mathcal{X}}
=
\langle\boldsymbol f_1,\boldsymbol{y}\rangle_{\mathcal{V}',\mathcal{V}}
+
\langle\boldsymbol f_2,\boldsymbol{y}\rangle_{\mathcal{E}',\mathcal{E}},
\ 
\boldsymbol v\in\mathcal{X}.
\end{align*}
Therefore, we infer 
\begin{align}\label{eqn-gelfand}
	\mathcal{X}\hookrightarrow\mathcal{H}
	\cong\mathcal{H}'
	\hookrightarrow\mathcal{X}'
\end{align}
is a Gelfand triple (or evolution triple), that is, a chain of continuous and dense embeddings. We denote by $\mathcal{D}(0,T)$ the space of all infinitely differentiable, real-valued functions with compact support in $(0,T)$. Given a Banach space $\mathbb{X}$, we denote by $\mathcal{D}'(0,T;\mathbb{X})$ the space of all continuous linear operators (distributions) from $\mathcal{D}(0,T)$ into $\mathbb{X}$. An element $\boldsymbol{y}\in\mathcal{D}'(0,T;\mathbb{X})$ is called an $\mathbb{X}$-valued distribution on $(0,T)$.

\begin{theorem}[Generalized Lions-Magenes lemma]\label{GLML}
	\label{thm:generalized-lions-magenes}
	Let $\mathcal{H}$ be a Hilbert space, and let $\mathcal{V}$ and $\mathcal{E}$ be Banach spaces satisfying the Gelfand triple \eqref{eqn-gelfand}. Let $1<p<\infty$, and set $p'=\frac{p}{p-1}$. Suppose that
	\begin{align*}
		\boldsymbol{y}\in
		\mathrm{L}^2(0,T;\mathcal{V})
		\cap
		\mathrm{L}^p(0,T;\mathcal{E})
	\end{align*}
	and
	\begin{align*}
		\boldsymbol{y}'=\boldsymbol{y}_1+\boldsymbol{y}_2
		\ \text{in }\mathcal{D}'(0,T;\mathcal{V}'+\mathcal{E}'),
	\end{align*}
	where
	\begin{align*}
		\boldsymbol{y}_1\in \mathrm{L}^2(0,T;\mathcal{V}'),
		\ 
		\boldsymbol{y}_2\in \mathrm{L}^{p'}(0,T;\mathcal{E}').
	\end{align*}
	Then $\boldsymbol{y}$ admits a unique representative, still denoted by
	$\boldsymbol{y}$, such that
	\begin{align*}
		\boldsymbol{y}\in \mathrm{C}([0,T];\mathcal{H}).
	\end{align*}
	Moreover, the map
	$t\mapsto\|\boldsymbol{y}(t)\|_{\mathcal{H}}^2$
	is absolutely continuous on $[0,T]$, and
	\begin{align*}
		\frac{\mathrm{d}}{\mathrm{d}t}\|\boldsymbol{y}(t)\|_{\mathcal{H}}^2
		=
		2\langle\boldsymbol{y}_1(t),\boldsymbol{y}(t)\rangle_{\mathcal{V}',\mathcal{V}}
		+
		2\langle\boldsymbol{y}_2(t),\boldsymbol{y}(t)\rangle_{\mathcal{E}',\mathcal{E}}
	\end{align*}
	for a.e.\ $t\in[0,T]$. Consequently, for every $0\leq s\leq t\leq T$, we have
	\begin{equation*}
		\begin{aligned}
			\|\boldsymbol{y}(t)\|_{\mathcal{H}}^2
			-
			\|\boldsymbol{y}(s)\|_{\mathcal{H}}^2
			=&
			2\int_s^t
			\langle\boldsymbol{y}_1(r),\boldsymbol{y}(r)\rangle_{\mathcal{V}',\mathcal{V}}
			\,\mathrm{d}r
			+
			2\int_s^t
			\langle\boldsymbol{y}_2(r),\boldsymbol{y}(r)\rangle_{\mathcal{E}',\mathcal{E}}
			\,\mathrm{d}r.
		\end{aligned}
	\end{equation*}
\end{theorem}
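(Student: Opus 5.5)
We follow the classical Lions--Magenes/Temam argument based on regularization in time, with the pairing split according to the decomposition $\mathcal{X}'\cong\mathcal{V}'+\mathcal{E}'$.

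First we extend $\boldsymbol{y}$ beyond $[0,T]$ by reflection. On $(-T,0)$ we set $\tilde{\boldsymbol{y}}(t)=\boldsymbol{y}(-t)$, and on $(T,2T)$ we set $\tilde{\boldsymbol{y}}(t)=\boldsymbol{y}(2T-t)$. Reflection preserves membership in $\mathrm{L}^2(\mathcal{V})\cap\mathrm{L}^p(\mathcal{E})$. Its distributional derivative is the reflected $\pm\boldsymbol{y}_1$ plus the reflected $\pm\boldsymbol{y}_2$; no boundary delta terms appear, since $\boldsymbol{y}\in\mathrm{W}^{1,1}(0,T;\mathcal{X}')$ is continuous into $\mathcal{X}'$. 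We then multiply by a smooth cutoff $\theta$ equal to $1$ on $[0,T]$. Time mollification $\boldsymbol{y}^h=\rho_h*(\theta\tilde{\boldsymbol{y}})$ produces functions in $\mathrm{C}^1([0,T];\mathcal{X})$ with
\[
\boldsymbol{y}^h\to\boldsymbol{y}\ \text{ in }\ \mathrm{L}^2(\mathcal{V})\cap\mathrm{L}^p(\mathcal{E}),\qquad (\boldsymbol{y}^h)'=\boldsymbol{y}_1^h+\boldsymbol{y}_2^h+(\text{cutoff terms}),
\]
and with $\boldsymbol{y}_1^h\to\boldsymbol{y}_1$ in $\mathrm{L}^2(\mathcal{V}')$ and $\boldsymbol{y}_2^h\to\boldsymbol{y}_2$ in $\mathrm{L}^{p'}(\mathcal{E}')$ on $(0,T)$.

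For smooth functions, the Gelfand triple \eqref{eqn-gelfand} gives the pointwise identity
\[
\tfrac{\mathrm{d}}{\mathrm{d}t}\|\boldsymbol{y}^h\|_{\mathcal{H}}^2=2\langle(\boldsymbol{y}^h)',\boldsymbol{y}^h\rangle_{\mathcal{X}',\mathcal{X}}.
\]
This step uses the density of $\mathcal{X}$ in $\mathcal{H}$: it guarantees that $\mathcal{H}\hookrightarrow\mathcal{X}'$ is injective, so that the $\mathcal{H}$ inner product agrees with the duality pairing. The right-hand side splits as $2\langle\boldsymbol{y}_1^h,\boldsymbol{y}^h\rangle_{\mathcal{V}',\mathcal{V}}+2\langle\boldsymbol{y}_2^h,\boldsymbol{y}^h\rangle_{\mathcal{E}',\mathcal{E}}$.

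Next we apply this identity to differences $\boldsymbol{y}^h-\boldsymbol{y}^k$. Integrate from $t_0$ to $t$ and average in $t_0$ over $(0,T)$, i.e.\ use $\|\boldsymbol{w}(t)\|^2\le\frac1T\int_0^T\|\boldsymbol{w}\|^2+2\int_0^T|\langle \boldsymbol{w}',\boldsymbol{w}\rangle|$. Applying H\"older separately to each part of the pairing yields
\begin{align*}
\sup_{t\in[0,T]}\|\boldsymbol{y}^h(t)-\boldsymbol{y}^k(t)\|_{\mathcal{H}}^2\le\;& C\|\boldsymbol{y}^h-\boldsymbol{y}^k\|_{\mathrm{L}^2(\mathcal{H})}^2+2\|\boldsymbol{y}_1^h-\boldsymbol{y}_1^k\|_{\mathrm{L}^2(\mathcal{V}')}\|\boldsymbol{y}^h-\boldsymbol{y}^k\|_{\mathrm{L}^2(\mathcal{V})}\\
&+2\|\boldsymbol{y}_2^h-\boldsymbol{y}_2^k\|_{\mathrm{L}^{p'}(\mathcal{E}')}\|\boldsymbol{y}^h-\boldsymbol{y}^k\|_{\mathrm{L}^p(\mathcal{E})}.
\end{align*}
Here the $\mathrm{L}^2(\mathcal{H})$ term is controlled through $\mathcal{V}\hookrightarrow\mathcal{H}$. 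Hence $\{\boldsymbol{y}^h\}$ is Cauchy in $\mathrm{C}([0,T];\mathcal{H})$. Its limit coincides a.e.\ with $\boldsymbol{y}$, which gives the continuous representative; uniqueness of this representative is immediate, since two continuous functions that agree a.e.\ agree everywhere. Passing to the limit in the integrated identity
\[
\|\boldsymbol{y}^h(t)\|^2-\|\boldsymbol{y}^h(s)\|^2=2\int_s^t\bigl(\langle\boldsymbol{y}_1^h,\boldsymbol{y}^h\rangle+\langle\boldsymbol{y}_2^h,\boldsymbol{y}^h\rangle\bigr)\,\mathrm{d}r
\]
gives the stated integral formula for every $0\le s\le t\le T$. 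The integrand $\langle\boldsymbol{y}_1,\boldsymbol{y}\rangle+\langle\boldsymbol{y}_2,\boldsymbol{y}\rangle$ lies in $\mathrm{L}^1(0,T)$ by H\"older, so the formula yields absolute continuity of $t\mapsto\|\boldsymbol{y}(t)\|_{\mathcal{H}}^2$ and, by Lebesgue differentiation, the a.e.\ derivative formula.

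The main obstacle is the bookkeeping of the decomposition $\boldsymbol{y}'=\boldsymbol{y}_1+\boldsymbol{y}_2$. The splitting is not unique, and the pairing $\langle\boldsymbol{f},\boldsymbol{y}\rangle_{\mathcal{X}',\mathcal{X}}$ must be shown independent of it. It also has to be compatible with the $\mathcal{H}$ inner product, so that $\frac{\mathrm{d}}{\mathrm{d}t}(\boldsymbol{y}^h,\boldsymbol{y}^h)$ truly equals the split pairing. This compatibility is exactly where the density of $\mathcal{X}$ in $\mathcal{H}$ enters. The other delicate point is the handling of the endpoints in the reflection and cutoff step. Alternatively, one can avoid this step by working on compact subintervals and using the averaged sup bound, which already covers $[0,T]$.
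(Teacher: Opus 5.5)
Your proposal is correct and follows essentially the same route as the paper: reflection at the endpoints justified by $\boldsymbol{y}\in\mathrm{W}^{1,1}(0,T;\mathcal{X}')$, time mollification, the regularized identity split into $\mathcal{V}'$--$\mathcal{V}$ and $\mathcal{E}'$--$\mathcal{E}$ pairings, a Cauchy estimate for differences in $\mathrm{C}([0,T];\mathcal{H})$, and passage to the limit. The differences are cosmetic. You control the initial term by averaging over $t_0\in(0,T)$, whereas the paper fixes a point $s_0$ at which a subsequence of the mollifications converges in $\mathcal{H}$. Your cutoff $\theta$ is unnecessary, and if you keep it, take it equal to $1$ on a neighborhood of $[0,T]$ so the cutoff terms vanish there for small $h$.
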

	
	\begin{proof}
		The proof is divided into the following number of steps:
		\vskip 2mm
		\noindent
		\textbf{Step 1:} \emph{Integrability and time regularity.}
		Since
	$
		\mathcal{X}=\mathcal{V}\cap\mathcal{E}
	$
		is continuously and densely embedded into $\mathcal{H}$, identifying
		$\mathcal{H}$ with $\mathcal{H}'$, we have the Gelfand triple
	$
		\mathcal{X}\hookrightarrow\mathcal{H}\hookrightarrow\mathcal{X}'.
	$
		Moreover,
	$
		\mathcal{X}'\cong\mathcal{V}'+\mathcal{E}'.
	$
		We first observe that
	$$
		\langle \boldsymbol{y}_1(\cdot),\boldsymbol{y}(\cdot)\rangle_{\mathcal{V}',\mathcal{V}}
		\in \mathrm{L}^1(0,T)
\	\text{ 	and }\ 
		\langle \boldsymbol{y}_2(\cdot),\boldsymbol{y}(\cdot)\rangle_{\mathcal{E}',\mathcal{E}}
		\in \mathrm{L}^1(0,T).
	$$
		Indeed, by H\"older's inequality, we have 
		\begin{align}\label{eqn-0}
			\int_0^T
			\left|
			\langle \boldsymbol{y}_1(t),\boldsymbol{y}(t)\rangle_{\mathcal{V}',\mathcal{V}}
			\right|\d t
			&\le
			\|\boldsymbol{y}_1\|_{\mathrm{L}^2(0,T;\mathcal{V}')}
			\|\boldsymbol{y}\|_{\mathrm{L}^2(0,T;\mathcal{V})}
			<\infty,
		\end{align}
		and
		\begin{align}\label{eqn-00}
			\int_0^T
			\left|
			\langle \boldsymbol{y}_2(t),\boldsymbol{y}(t)\rangle_{\mathcal{E}',\mathcal{E}}
			\right|\d t
			&\le
			\|\boldsymbol{y}_2\|_{\mathrm{L}^{p'}(0,T;\mathcal{E}')}
			\|\boldsymbol{y}\|_{\mathrm{L}^p(0,T;\mathcal{E})}
			<\infty.
		\end{align}
		We shall also need to regard $\boldsymbol{y}'$ as an $\mathrm{L}^1(0,T;\mathcal{X}')$
		function. Since the restriction maps
	$\mathcal{V}'\hookrightarrow\mathcal{X}',$ and 
	$\mathcal{E}'\hookrightarrow\mathcal{X}'$
		are continuous, and $(0,T)$ has finite measure, we have
		\begin{align*}
		\boldsymbol{y}_1\in \mathrm{L}^2(0,T;\mathcal{V}')
		\hookrightarrow \mathrm{L}^1(0,T;\mathcal{X}')
	\ \text{ and }\ 
		\boldsymbol{y}_2\in \mathrm{L}^{p'}(0,T;\mathcal{E}')
		\hookrightarrow \mathrm{L}^1(0,T;\mathcal{X}').
		\end{align*}
		Therefore, we deduce
		\begin{align*}
		\boldsymbol{y}'=\boldsymbol{y}_1+\boldsymbol{y}_2\in \mathrm{L}^1(0,T;\mathcal{X}').
		\end{align*}
		Furthermore,
	$
		\boldsymbol{y}\in \mathrm{L}^2(0,T;\mathcal{V})
		\hookrightarrow \mathrm{L}^2(0,T;\mathcal{H})
		\hookrightarrow \mathrm{L}^1(0,T;\mathcal{X}').
	$
		Consequently, we find 
		\begin{align*}
		\boldsymbol{y}\in\W^{1,1}(0,T;\mathcal{X}').
		\end{align*}
		In particular, $\boldsymbol{y}$ has an absolutely continuous
		$\mathcal{X}'$-valued representative and hence well-defined traces in
		$\mathcal{X}'$ at $t=0$ and $t=T$. This allows us to perform the
		reflection construction used below.
		\vskip 2mm
		\noindent
		\textbf{Step 2:} \emph{Extension by reflection and time regularization.}
		Choose $\sigma>0$ sufficiently small. We extend $\boldsymbol{y}$ from $(0,T)$
		to
	$I_\sigma:=(-\sigma,T+\sigma)$
		by reflection at the endpoints. Define
		\begin{align*}
		\widetilde{\boldsymbol{y}}(t)
		=
		\begin{cases}
			\boldsymbol{y}(-t), & -\sigma<t<0,\\[1mm]
			\boldsymbol{y}(t), & 0\le t\le T,\\[1mm]
			\boldsymbol{y}(2T-t), & T<t<T+\sigma.
		\end{cases}
	\end{align*}
		Likewise, for $i=1,2$, define
		\begin{align*}
		\widetilde{\boldsymbol{y}}_i(t)
		=
		\begin{cases}
			-\boldsymbol{y}_i(-t), & -\sigma<t<0,\\[1mm]
			\boldsymbol{y}_i(t), & 0<t<T,\\[1mm]
			-\boldsymbol{y}_i(2T-t), & T<t<T+\sigma.
		\end{cases}
		\end{align*}
	The absolute continuity of $\boldsymbol u$ in $\mathcal X'$ implies
$
	\widetilde{\boldsymbol u}(0-)
	=
	\boldsymbol u(0)
	=
	\widetilde{\boldsymbol u}(0+)
$
	and
$
	\widetilde{\boldsymbol u}(T-)
	=
	\boldsymbol u(T)
	=
	\widetilde{\boldsymbol u}(T+)
$
	in $\mathcal X'$. Thus $\widetilde{\boldsymbol u}$ has no jump at
	$t=0$ or $t=T$.	Reflection preserves the relevant Bochner norms. Therefore, we infer 
	\begin{align*}
		\widetilde{\boldsymbol{y}}
		\in
		\mathrm{L}^2(I_\sigma;\mathcal{V})
		\cap
		\mathrm{L}^p(I_\sigma;\mathcal{E}), \
		\widetilde{\boldsymbol{y}}_1
		\in \mathrm{L}^2(I_\sigma;\mathcal{V}'),
		\ 
		\widetilde{\boldsymbol{y}}_2
		\in \mathrm{L}^{p'}(I_\sigma;\mathcal{E}').
	\end{align*}
	We claim that
	\begin{align}\label{eqn-1}
		\widetilde{\boldsymbol{y}}'
		=
		\widetilde{\boldsymbol{y}}_1+\widetilde{\boldsymbol{y}}_2
		\ 
		\text{ in }\ \mathcal D'(I_\sigma;\mathcal{X}').
		\end{align}
		Indeed, on $(0,T)$ this is precisely the assumed identity.
		For $t<0$,
	$
		\widetilde{\boldsymbol{y}}(t)=\boldsymbol{y}(-t),
	$
		and hence, in the sense of $\mathcal{X}'$-valued distributions,
		\begin{align*}
		\widetilde{\boldsymbol{y}}'(t)
		=
		-\boldsymbol{y}'(-t)
		=
		-\boldsymbol{y}_1(-t)-\boldsymbol{y}_2(-t)
		=
		\widetilde{\boldsymbol{y}}_1(t)+\widetilde{\boldsymbol{y}}_2(t).
		\end{align*}
		Similarly, for $t>T$,
	$
		\widetilde{\boldsymbol{y}}(t)=\boldsymbol{y}(2T-t),
	$
		so that 
		\begin{align*}
		\widetilde{\boldsymbol{y}}'(t)
		=
		-\boldsymbol{y}'(2T-t)=	-\boldsymbol{y}_1(2T-t)-\boldsymbol{y}_2(2T-t)
		=
		\widetilde{\boldsymbol{y}}_1(t)+\widetilde{\boldsymbol{y}}_2(t).
		\end{align*}
		Since $\boldsymbol{y}\in\W^{1,1}(0,T;\mathcal{X}')$ has well-defined endpoint
		traces and the reflected function is continuous in $\mathcal{X}'$ at
		the reflection points, no Dirac masses occur at $0$ or $T$.
		Therefore \eqref{eqn-1} holds on the whole interval $I_\sigma$.
	\vskip 2mm
	\noindent
	\textbf{Step 3:} \emph{Energy identity for the regularized functions.}
	Let us now regularize the time. Let
		\begin{align*}
		\rho\in\mathrm{C}_c^\infty(\mathbb R),
		\ 
		\rho\ge0,
		\ 
		\rho(-r)=\rho(r),
		\ 
		\int_{\mathbb R}\rho(r)\,dr=1,
		\end{align*}
		be a standard symmetric mollifier, and put
		\begin{align*}
		\rho_\varepsilon(r)
		=
		\frac1\varepsilon
		\rho\left(\frac r\varepsilon\right).
		\end{align*}
		For $\varepsilon>0$ sufficiently small, define
		\[
		\boldsymbol{y}^\varepsilon
		=
		\rho_\varepsilon*\widetilde{\boldsymbol{y}},\ 
		\boldsymbol{y}_1^\varepsilon
		=
		\rho_\varepsilon*\widetilde{\boldsymbol{y}}_1,
			\ \text{ and }\ 
		\boldsymbol{y}_2^\varepsilon
		=
		\rho_\varepsilon*\widetilde{\boldsymbol{y}}_2.
		\]
		These functions are defined on a neighborhood of $[0,T]$. Since convolution commutes with distributional differentiation, \eqref{eqn-1}
		implies
		\begin{align}\label{eqn-2}
		(\boldsymbol{y}^\varepsilon)'
		=
		\boldsymbol{y}_1^\varepsilon+\boldsymbol{y}_2^\varepsilon
		\ \text{ in }\ \mathcal{X}'.
		\end{align}
			By the standard approximation properties of time mollification, we have 
	\begin{align}\label{eqn-3}
		\left\{
		\begin{aligned}
		&	\boldsymbol{y}^\varepsilon\to \boldsymbol{y}
			\ 
			\text{ in }\ \mathrm{L}^2(0,T;\mathcal{V}),\\
			&\boldsymbol{y}^\varepsilon\to \boldsymbol{y}
			\ 
			\text{ in }\ \mathrm{L}^p(0,T;\mathcal{E}),\\
			&
			\boldsymbol{y}_1^\varepsilon\to \boldsymbol{y}_1
			\ 
			\text{ in }\ \mathrm{L}^2(0,T;\mathcal{V}'),\\
		&	\boldsymbol{y}_2^\varepsilon\to \boldsymbol{y}_2
			\ 
			\text{ in }\mathrm{L}^{p'}(0,T;\mathcal{E}').
		\end{aligned}
		\right.
	\end{align}
		For each fixed $\varepsilon>0$ and $t\in[0,T]$, the Bochner
		integral defining $\boldsymbol{y}^\varepsilon(t)$ belongs simultaneously to
		$\mathcal{V}$ and $\mathcal{E}$. Thus, we have 
	$
		\boldsymbol{y}^\varepsilon(t)\in\mathcal{X}.
	$
		On the other hand,
	$
		\widetilde{\boldsymbol{y}}\in \mathrm{L}^2(I_\sigma;\mathcal{H}),
	$
		and therefore convolution in time gives
	$
		\boldsymbol{y}^\varepsilon\in \mathrm{C}^\infty([0,T];\mathcal{H}).
	$
		In particular,
	\begin{align}\label{eqn-4}
		(\boldsymbol{y}^\varepsilon)'(t)\in\mathcal{H}.
	\end{align}
		Now the canonical embedding
	$
		\mathcal{H}\hookrightarrow\mathcal{X}'
	$
		is characterized by
	$$
		\langle h,y\rangle_{\mathcal{X}',\mathcal{X}}
		=
		(h,y)_{\mathcal{H}},
		\ 
		h\in\mathcal{H},\  y\in\mathcal{X}.
	$$
		By \eqref{eqn-2} and \eqref{eqn-4}, the element
	$
		\boldsymbol{y}_1^\varepsilon(t)+\boldsymbol{y}_2^\varepsilon(t)\in\mathcal{X}'
	$
		is precisely the image in $\mathcal{X}'$ of the $\mathcal{H}$-element
		$(	\boldsymbol{y}^\varepsilon)'(t)$. Hence, we deduce for all $t\in[0,T]$ that 
		\begin{align}\label{eqn-5}
			\frac12\frac{\d}{\d t}
			\|	\boldsymbol{y}^\varepsilon(t)\|_{\mathcal{H}}^2
			&=
			\big((	\boldsymbol{y}^\varepsilon)'(t),	\boldsymbol{y}^\varepsilon(t)\big)_{\mathcal{H}}
		\nonumber	\\
			&=
			\left\langle
			\boldsymbol{y}_1^\varepsilon(t)+\boldsymbol{y}_2^\varepsilon(t),
				\boldsymbol{y}^\varepsilon(t)
			\right\rangle_{\mathcal{X}',\mathcal{X}}
		\nonumber	\\
			&=
			\left\langle
			\boldsymbol{y}_1^\varepsilon(t),	\boldsymbol{y}^\varepsilon(t)
			\right\rangle_{\mathcal{V}',\mathcal{V}}+
			\left\langle
			\boldsymbol{y}_2^\varepsilon(t),	\boldsymbol{y}^\varepsilon(t)
			\right\rangle_{\mathcal{E}',\mathcal{E}}.
		\end{align}
		This is the regularized energy identity.
	\vskip 2mm
	\noindent
	\textbf{Step 4:} \emph{Passage to an $\mathcal{H}$-continuous representative.}
	Let us now prove the Cauchy property of the regularizations in
			$\mathrm{C}([0,T];\mathcal{H})$.
		Choose a sequence
	$
		\varepsilon_n\downarrow0.
	$
		By \eqref{eqn-3} and the continuous embedding
		$\mathcal{V}\hookrightarrow\mathcal{H}$, we have 
		\begin{align}\label{eqn-6}
		\boldsymbol{y}^{\varepsilon_n}\to \boldsymbol{y}
		\ 
		\text{ in }\ \mathrm{L}^2(0,T;\mathcal{H}).
		\end{align}
		Passing to a subsequence, which we do not relabel, we may assume that
		\begin{align}\label{eqn-7}
		\boldsymbol{y}^{\varepsilon_n}(s)\to \boldsymbol{y}(s)
		\ \text{ in }\ \mathcal{H}
		\end{align}
		for almost every $s\in(0,T)$.
	Fix one such point
	$
		s_0\in(0,T).
	$
		For $n,m\in\mathbb N$, we define
		\[
		\boldsymbol{w}_{n,m}
		:=
		\boldsymbol{y}^{\varepsilon_n}-\boldsymbol{y}^{\varepsilon_m}.
		\]
		Then
		$
		\boldsymbol{w}_{n,m}\in \mathrm{C}^\infty([0,T];\mathcal{H})
	$
		and
	$
		\boldsymbol{w}_{n,m}(t)\in\mathcal{X}.
	$
		Moreover,
		\[
		\boldsymbol{w}_{n,m}'
		=
		\boldsymbol{y}_1^{\varepsilon_n}-\boldsymbol{y}_1^{\varepsilon_m}
		+
		\boldsymbol{y}_2^{\varepsilon_n}-\boldsymbol{y}_2^{\varepsilon_m}.
		\]
		Applying the same Hilbert-space chain-rule argument used in \eqref{eqn-5} to
		$	\boldsymbol{w}_{n,m}$, we obtain for all $t\in[0,T]$ that 
		\begin{align}
			\frac12\frac{\d}{\d t}
			\|	\boldsymbol{w}_{n,m}(t)\|_{\mathcal{H}}^2
			&=
			\left\langle
			\boldsymbol{y}_1^{\varepsilon_n}(t)-\boldsymbol{y}_1^{\varepsilon_m}(t),
				\boldsymbol{w}_{n,m}(t)
			\right\rangle_{\mathcal{V}',\mathcal{V}}
+
			\left\langle
			\boldsymbol{y}_2^{\varepsilon_n}(t)-\boldsymbol{y}_2^{\varepsilon_m}(t),
				\boldsymbol{w}_{n,m}(t)
			\right\rangle_{\mathcal{E}',\mathcal{E}}.
		\end{align}
	For $t\ge s_0$,	integrating from $s_0$ to $t$ gives
		\begin{align}
			\|	\boldsymbol{w}_{n,m}(t)\|_{\mathcal{H}}^2
			&=
			\|	\boldsymbol{w}_{n,m}(s_0)\|_{\mathcal{H}}^2
	+
			2\int_{s_0}^t
			\left\langle
			\boldsymbol{y}_1^{\varepsilon_n}(r)-\boldsymbol{y}_1^{\varepsilon_m}(r),
				\boldsymbol{w}_{n,m}(t)
			\right\rangle_{\mathcal{V}',\mathcal{V}}\d r
	\nonumber\\&\quad	+
			2\int_{s_0}^t
			\left\langle
			\boldsymbol{y}_2^{\varepsilon_n}(r)-\boldsymbol{y}_2^{\varepsilon_m}(r),
				\boldsymbol{w}_{n,m}(r)
			\right\rangle_{\mathcal{E}',\mathcal{E}}\d r.
		\end{align}
		For $t<s_0$, the same identity is integrated from $t$ to $s_0$.
		Consequently, for every $t\in[0,T]$, we find 
		\begin{align*}
			\|	\boldsymbol{w}_{n,m}(t)\|_{\mathcal{H}}^2
			&\le
			\|	\boldsymbol{w}_{n,m}(s_0)\|_{\mathcal{H}}^2
			+
			2\int_0^T
			\left|
			\left\langle
			\boldsymbol{y}_1^{\varepsilon_n}(r)-\boldsymbol{y}_1^{\varepsilon_m}(r),
				\boldsymbol{w}_{n,m}(r)
			\right\rangle_{\mathcal{V}',\mathcal{V}}
			\right|\d r
			\\
			&\quad+
			2\int_0^T
			\left|
			\left\langle
			\boldsymbol{y}_2^{\varepsilon_n}(r)-\boldsymbol{y}_2^{\varepsilon_m}(r),
				\boldsymbol{w}_{n,m}(r)
			\right\rangle_{\mathcal{E}',\mathcal{E}}
			\right|\d r.
		\end{align*}
		Taking the supremum over $t\in[0,T]$ and applying H\"older's
		inequality, we obtain
		\begin{align}\label{eqn-8}
			\sup_{0\le t\le T}
			\|\boldsymbol{w}_{n,m}(t)\|_{\mathcal{H}}^2
			&\le
			\|\boldsymbol{w}_{n,m}(s_0)\|_{\mathcal{H}}^2
			+
			2
			\|\boldsymbol{y}_1^{\varepsilon_n}-\boldsymbol{y}_1^{\varepsilon_m}\|_
			{\mathrm{L}^2(0,T;\mathcal{V}')}
			\|\boldsymbol{y}^{\varepsilon_n}-\boldsymbol{y}^{\varepsilon_m}\|_
			{\mathrm{L}^2(0,T;\mathcal{V})}
		\nonumber	\\
			&\quad+
			2
			\|\boldsymbol{y}_2^{\varepsilon_n}-\boldsymbol{y}_2^{\varepsilon_m}\|_
			{\mathrm{L}^{p'}(0,T;\mathcal{E}')}
			\,
			\|\boldsymbol{y}^{\varepsilon_n}-\boldsymbol{y}^{\varepsilon_m}\|_
			{\mathrm{L}^p(0,T;\mathcal{E})}.
		\end{align}
		By the choice of $s_0$, we infer from \eqref{eqn-7} that 
		\[
		\|\boldsymbol{w}_{n,m}(s_0)\|_{\mathcal{H}}\to0
		\ \text{ as }\ n,m\to\infty.
		\]
		Moreover, by using the convergences given in \eqref{eqn-3}, each of the remaining products on the
		right-hand side of \eqref{eqn-8} tends to zero. Therefore, we arrive at 
		\begin{align}\label{eqn-9}
		\lim_{n,m\to\infty}
		\sup_{0\le t\le T}
		\|\boldsymbol{y}^{\varepsilon_n}(t)-\boldsymbol{y}^{\varepsilon_m}(t)\|_{\mathcal{H}}
		=0.
		\end{align}
		Hence, the sequence $\{\boldsymbol{y}^{\varepsilon_n}\}_{n\geq1}$ is Cauchy in the Banach space
	$\mathrm{C}([0,T];\mathcal{H}).$
	Thus,  there exists
	$
		\boldsymbol{v}\in \mathrm{C}([0,T];\mathcal{H})
	$
		such that
		\begin{align}\label{eqn-10}
		\boldsymbol{y}^{\varepsilon_n}\to \boldsymbol{v}
		\ \text{ in }\ \mathrm{C}([0,T];\mathcal{H}).
		\tag{16}
		\end{align}
		On the other hand, by \eqref{eqn-6}, we have 
	$\boldsymbol{y}^{\varepsilon_n}\to \boldsymbol{y}
		\ \text{ in }\ \mathrm{L}^2(0,T;\mathcal{H}).$
		Since uniform convergence in $\mathcal{H}$ also implies convergence in
		$\mathrm{L}^2(0,T;\mathcal{H})$, uniqueness of the $\mathrm{L}^2$-limit yields
		\begin{align*}
		\boldsymbol{v}=\boldsymbol{y}
		\ \text{ for a.e. }\ t\in[0,T].
		\end{align*}
		Therefore $\boldsymbol{v}$ is an $\mathcal{H}$-continuous representative of $\boldsymbol{y}$.
		Replacing $\boldsymbol{y}$ by this representative, we obtain
	$
		\boldsymbol{y}\in \mathrm{C}([0,T];\mathcal{H}).$
		\vskip 2mm
		\noindent
		\textbf{Step 5:} \emph{Passage to the limit and the energy identity.}
		For each $n$, integrating \eqref{eqn-5} from $s$ to $t$, where
		$0\le s\le t\le T$, gives
		\begin{align}\label{eqn-11}
			\|\boldsymbol{y}^{\varepsilon_n}(t)\|_{\mathcal{H}}^2
			-
			\|\boldsymbol{y}^{\varepsilon_n}(s)\|_{\mathcal{H}}^2
			&=
			2\int_s^t
			\left\langle
			\boldsymbol{y}_1^{\varepsilon_n}(r),
			\boldsymbol{y}^{\varepsilon_n}(r)
			\right\rangle_{\mathcal{V}',\mathcal{V}}\d r+
			2\int_s^t
			\left\langle
			\boldsymbol{y}_2^{\varepsilon_n}(r),
			\boldsymbol{y}^{\varepsilon_n}(r)
			\right\rangle_{\mathcal{E}',\mathcal{E}}\d r.
		\end{align}
	The convergence \eqref{eqn-10} implies 
	$
		\boldsymbol{y}^{\varepsilon_n}\to \boldsymbol{y}
		\ \text{ in }\ \mathrm{C}([0,T];\mathcal{H}),
	$
		and hence, for every $s,t\in[0,T]$,
		\begin{align}
			&
			\|\boldsymbol{y}^{\varepsilon_n}(t)\|_{\mathcal{H}}^2
			-
			\|\boldsymbol{y}^{\varepsilon_n}(s)\|_{\mathcal{H}}^2
			\rightarrow
			\|\boldsymbol{y}(t)\|_{\mathcal{H}}^2
			-
			\|\boldsymbol{y}(s)\|_{\mathcal{H}}^2.
		\end{align}
			For the terms in the right hand side of \eqref{eqn-11}, we deduce 
		\begin{align}\label{eqn-12}
			&
			\left|\int_s^t
			\left[
			\left\langle
			\boldsymbol{y}_1^{\varepsilon_n}(r),\boldsymbol{y}^{\varepsilon_n}(r)
			\right\rangle_{\mathcal{V}',\mathcal{V}}
			-
			\left\langle
			\boldsymbol{y}_1(r),\boldsymbol{y}(r)
			\right\rangle_{\mathcal{V}',\mathcal{V}}
			\right]\d r\right|
			\nonumber\\&\quad+
			\left|\int_s^t
			\left[
			\left\langle
			\boldsymbol{y}_2^{\varepsilon_n}(r),\boldsymbol{y}^{\varepsilon_n}(r)
			\right\rangle_{\mathcal{E}',\mathcal{E}}
			-
			\left\langle
			\boldsymbol{y}_2(r),\boldsymbol{y}(r)
			\right\rangle_{\mathcal{E}',\mathcal{E}}
			\right]\d r\right|
		\nonumber	\\
			&\leq 
		\left|	\int_s^t
			\left\langle
			\boldsymbol{y}_1^{\varepsilon_n}(r)-\boldsymbol{y}_1(r),
			\boldsymbol{y}^{\varepsilon_n}(r)
			\right\rangle_{\mathcal{V}',\mathcal{V}}\d r\right|
		+\left|
			\int_s^t
			\left\langle
			\boldsymbol{y}_1(r),
			\boldsymbol{y}^{\varepsilon_n}(r)-\boldsymbol{y}(r)
			\right\rangle_{\mathcal{V}',\mathcal{V}}\d r\right|
			\nonumber\\&\quad +
				\left|	\int_s^t
			\left\langle
			\boldsymbol{y}_2^{\varepsilon_n}(r)-\boldsymbol{y}_2(r),
			\boldsymbol{y}^{\varepsilon_n}(r)
			\right\rangle_{\mathcal{E}',\mathcal{E}}\d r\right|
			+\left|
			\int_s^t
			\left\langle
			\boldsymbol{y}_2(r),
			\boldsymbol{y}^{\varepsilon_n}(r)-\boldsymbol{y}(r)
			\right\rangle_{\mathcal{E}',\mathcal{E}}\d r\right|
			\nonumber\\&\leq  	\|\boldsymbol{y}_1^{\varepsilon_n}-\boldsymbol{y}_1\|_{\mathrm{L}^2(0,T;\mathcal{V}')}
			\|\boldsymbol{y}^{\varepsilon_n}\|_{\mathrm{L}^2(0,T;\mathcal{V})}
		+
			\|\boldsymbol{y}_1\|_{\mathrm{L}^2(0,T;\mathcal{V}')}
			\|\boldsymbol{y}^{\varepsilon_n}-\boldsymbol{y}\|_{\mathrm{L}^2(0,T;\mathcal{V})}
			\nonumber\\&\quad+  \|\boldsymbol{y}_2^{\varepsilon_n}-\boldsymbol{y}_2\|_{\mathrm{L}^{p'}(0,T;\mathcal{E}')}
			\|\boldsymbol{y}^{\varepsilon_n}\|_{\mathrm{L}^p(0,T;\mathcal{E})}
		+
			\|\boldsymbol{y}_2\|_{\mathrm{L}^{p'}(0,T;\mathcal{E}')}
			\|\boldsymbol{y}^{\varepsilon_n}-\boldsymbol{y}\|_{\mathrm{L}^p(0,T;\mathcal{E})}
			\nonumber\\&\to 0\ \text{ as } \ n\to\infty. 
		\end{align}
		Passing to the limit $n\to\infty$ in \eqref{eqn-11} and using
		\eqref{eqn-12}  gives
		\begin{align}\label{eqn-13}
			\|\boldsymbol{y}(t)\|_{\mathcal{H}}^2-\|\boldsymbol{y}(s)\|_{\mathcal{H}}^2
			&=
			2\int_s^t
			\langle \boldsymbol{y}_1(r),\boldsymbol{y}(r)\rangle_{\mathcal{V}',\mathcal{V}}\d r
		+
			2\int_s^t
			\langle \boldsymbol{y}_2(r),\boldsymbol{y}(r)\rangle_{\mathcal{E}',\mathcal{E}}\d r
		\end{align}
		for every
	$0\le s\le t\le T.$
		Let us define
		\begin{align*}
		\mathcal{F}(\cdot)
		:=
		2\langle \boldsymbol{y}_1(\cdot),\boldsymbol{y}(\cdot)\rangle_{\mathcal{V}',\mathcal{V}}
		+
		2\langle \boldsymbol{y}_2(\cdot),\boldsymbol{y}(\cdot)\rangle_{\mathcal{E}',\mathcal{E}}.
		\end{align*}
	Using  the estimates \eqref{eqn-0} and \eqref{eqn-00}, we infer 
	$\mathcal{F}\in \mathrm{L}^1(0,T).$
		Taking $s=0$ in \eqref{eqn-13}, we obtain
		\begin{align*}
		\|\boldsymbol{y}(t)\|_{\mathcal{H}}^2
		=\|\boldsymbol{y}(0)\|_{\mathcal{H}}^2
		+\int_0^t \mathcal{F}(r)\,\d r,
		\  t\in[0,T].
		\end{align*}
		It follows that
	$t\mapsto\|\boldsymbol{y}(t)\|_{\mathcal{H}}^2$
		is absolutely continuous on $[0,T]$.
		By the fundamental theorem of calculus for absolutely continuous
		functions,
		\begin{align*}
		\frac{\d}{\d t}\|\boldsymbol{y}(t)\|_{\mathcal{H}}^2
		=\mathcal{F}(t)=
		2\langle \boldsymbol{y}_1(t),\boldsymbol{y}(t)\rangle_{\mathcal{V}',\mathcal{V}}
		+
		2\langle \boldsymbol{y}_2(t),\boldsymbol{y}(t)\rangle_{\mathcal{E}',\mathcal{E}},
		\end{align*}
		for a.e. $t\in[0,T]$. 
		
	Suppose that
	$\boldsymbol{v},\boldsymbol{w}\in \mathrm{C}([0,T];\mathcal{H})$
		are two continuous representatives of the same Bochner function
		$\boldsymbol{y}$. Then
	$\boldsymbol{v}(t)=\boldsymbol{w}(t),$
		for a.e. $t\in[0,T]$. Since the function
	$t\mapsto \boldsymbol{v}(t)-\boldsymbol{w}(t)$
		is continuous from $[0,T]$ into $\mathcal{H}$ and vanishes almost
		everywhere, it must vanish identically. Hence
	$\boldsymbol{v}(t)=\boldsymbol{w}(t)$
	 for every $t\in[0,T]$. Thus the continuous representative is unique, and the proof is
		complete.
		\end{proof}
		
Let us now prove the energy equality for CBF equations defined on general unbounded domains.
	\begin{theorem}\label{main-3}
		Let \(\Omega\subset\mathbb{R}^d\), \(d\geq 2\), be a domain with nonempty
		boundary of uniform \(\mathrm{C}^{1,1}\)-type. Let
		\(\boldsymbol{y}_0\in\mathbb{H}\) and
		\(\boldsymbol{f}\in \mathrm{L}^{2}(0,T;\mathbb{V}')\) be given. Then, for every
		\(r\in[1,\infty)\), there exists a Leray-Hopf weak solution to
		\eqref{abstract-CBF}.
		
		Moreover, if \(d=2\) and \(r\in[1,\infty)\), or if \(d=3\) and
		\(r\in[3,\infty)\), then every Leray-Hopf weak solution satisfies the
		energy equality
		\eqref{energy-equality}.
	
		Furthermore, under the same assumptions, namely, \(d=2\) with
		\(r\in[1,\infty)\) or \(d=3\) with \(r\in[3,\infty)\), the Leray-Hopf
		weak solution is unique, provided
	$
		4\beta\mu\geq 1
		\ \text{ when }\ d=r=3.
	$
	\end{theorem}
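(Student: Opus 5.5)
The plan has three parts: existence, energy equality, and uniqueness. Existence I will only sketch, as in Step 1 of the proof of Theorem~\ref{main}. Use a Galerkin scheme built on the eigenfunctions or on a basis of $\mathbb{C}_{0,\sigma}^\infty(\Omega)$, prove a priori bounds in $\mathrm{L}^\infty(0,T;\mathbb{H})\cap\mathrm{L}^2(0,T;\mathbb{V})\cap\mathrm{L}^{r+1}(0,T;\mathbb{L}^{r+1}_\sigma)$, and pass to the limit by local compactness on bounded subdomains (Aubin--Lions on $\Omega\cap B_R$ with a diagonal argument), or alternatively by monotonicity and Minty--Browder as in \cite{Gautam-2025,Kinra-2024}. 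The strong energy inequality then follows from weak lower semicontinuity.

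\textbf{Energy equality.} I will apply Theorem~\ref{GLML} with $\mathcal{H}=\mathbb{H}$, $\mathcal{V}=\mathbb{V}$, $\mathcal{E}=\mathbb{L}^{r+1}_\sigma$ (for $r>1$; for $r=1$ take $\mathcal{E}=\mathbb{V}$) and exponent $p=r+1$. The density of $\mathcal{X}=\mathbb{V}\cap\mathbb{L}^{r+1}_\sigma$ in $\mathbb{H}$ is immediate, since $\mathbb{C}_{0,\sigma}^\infty(\Omega)\subset\mathcal{X}$. The essential input is Theorem~\ref{main-2}. For $p=r+1>2$ the resolvents $\mathcal{P}_n\boldsymbol{y}\in\D(\mathcal{L})$ approximate any $\boldsymbol{y}\in\mathcal{X}$ simultaneously in $\mathbb{V}$ and $\mathbb{L}^{r+1}$. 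Combined with the density of $\mathbb{C}_{0,\sigma}^\infty$ in $\D(\widetilde{\mathcal{A}}_p)$ for the graph norm (which controls $\mathbb{H}^2\cap\mathbb{W}^{2,p}$, hence $\mathbb{V}\cap\mathbb{L}^p$), this shows that $\mathbb{C}_{0,\sigma}^\infty(\Omega)$ is dense in $\mathcal{X}$. By Remark~\ref{rem-main}, the weak formulation \eqref{3.13-1} then extends to all test functions $\boldsymbol\psi\in\mathcal{X}$, so $\boldsymbol{y}'=\boldsymbol{y}_1+\boldsymbol{y}_2$ holds in $\mathcal{D}'(0,T;\mathcal{X}')$.

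The decomposition is as follows. Set
\[
\boldsymbol{y}_1=\boldsymbol{g}-\mu\mathcal{A}\boldsymbol{y}-\alpha\boldsymbol{y}-\mathcal{B}(\boldsymbol{y}),\qquad \boldsymbol{y}_2=-\beta\mathcal{C}(\boldsymbol{y})\in\mathrm{L}^{\frac{r+1}{r}}(0,T;\mathcal{E}').
\]
The main obstacle is to show $\boldsymbol{y}_1\in\mathrm{L}^2(0,T;\mathcal{V}')$, that is, $\mathcal{B}(\boldsymbol{y})\in\mathrm{L}^2(0,T;\mathbb{V}')$. For $d=2$ this follows from Ladyzhenskaya's inequality, $\|\mathcal{B}(\boldsymbol{y})\|_{\mathbb{V}'}\le\|\boldsymbol{y}\|_{\mathbb{L}^4}^2\le C\|\boldsymbol{y}\|_{\mathbb{H}}\|\boldsymbol{y}\|_{\mathbb{V}}$, which is valid on any domain for $\mathbb{H}^1_0$ functions. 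For $d=3$ and $r\ge3$ I would use interpolation:
\[
\|\boldsymbol{y}\|_{\mathbb{L}^4}^2\le\|\boldsymbol{y}\|_{\mathbb{H}}^{\frac{r-3}{r-1}}\|\boldsymbol{y}\|_{\mathbb{L}^{r+1}}^{\frac{r+1}{r-1}},
\]
which belongs to $\mathrm{L}^2(0,T)$ because $\frac{2(r+1)}{r-1}\le r+1$ exactly when $r\ge3$. If that bound were to fail, the fallback is to place $\mathcal{B}(\boldsymbol{y})$ into the $\mathcal{E}'$ part via $\|\,|\boldsymbol{y}|\nabla\boldsymbol{y}\|$ estimates. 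Theorem~\ref{GLML} then yields $\boldsymbol{y}\in\mathrm{C}([0,T];\mathbb{H})$ together with the identity
\[
\|\boldsymbol{y}(t)\|_{\mathbb{H}}^2-\|\boldsymbol{y}_0\|_{\mathbb{H}}^2=2\int_0^t\left(\langle\boldsymbol{y}_1,\boldsymbol{y}\rangle+\langle\boldsymbol{y}_2,\boldsymbol{y}\rangle\right)\d s.
\]
Using $b(\boldsymbol{y},\boldsymbol{y},\boldsymbol{y})=0$ (justified by density, since $|\boldsymbol{y}|^2\nabla\boldsymbol{y}\in\mathrm{L}^1$ under the same bounds) and $\langle\mathcal{C}(\boldsymbol{y}),\boldsymbol{y}\rangle=\|\boldsymbol{y}\|_{\mathbb{L}^{r+1}}^{r+1}$, this identity becomes exactly \eqref{energy-equality}.

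\textbf{Uniqueness.} I will repeat Step 3 of Theorem~\ref{main}. The difference $\boldsymbol{y}_1-\boldsymbol{y}_2$ of two solutions satisfies the hypotheses of Theorem~\ref{GLML} with the same spaces, which gives \eqref{346}. The three cases then go through verbatim: $r>3$ via \eqref{2.29} and Gronwall's inequality, $d=2$ via Ladyzhenskaya's inequality, and $d=r=3$ via \eqref{261-0}--\eqref{261-1} under $4\beta\mu\ge1$. The only place needing care is the step identified above, where the $\mathcal{X}'$ framework and the density from Theorem~\ref{main-2} are used.
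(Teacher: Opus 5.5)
Your route is the paper's: Theorem~\ref{GLML} with $\mathcal{V}=\mathbb{V}$, $\mathcal{E}=\mathbb{L}^{r+1}_\sigma$, $p=r+1$, with Theorem~\ref{main-2} invoked for density. You supply details the paper's one-line proof leaves implicit: the interpolation giving $\mathcal{B}(\boldsymbol{y})\in\mathrm{L}^2(0,T;\mathbb{V}')$ for $d=3$, $r\ge3$ is correct, as are the pairing identities and the reduction of uniqueness to \eqref{346}.

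You also correctly isolate what is needed to turn the weak formulation, which is tested only against $\mathbb{C}^\infty_{0,\sigma}(\Omega)$, into the identity $\boldsymbol{y}'=\boldsymbol{y}_1+\boldsymbol{y}_2$ in $\mathcal{D}'(0,T;\mathcal{X}')$. That is density of $\mathbb{C}^\infty_{0,\sigma}(\Omega)$ itself in $\mathcal{X}$ (Remark~\ref{rem-main}). Theorem~\ref{main-2}, which is all the paper cites at this point, only gives density of $\mathrm{D}(\widetilde{\mathcal{A}}_p)$.

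\textbf{The bridge you use is false.} $\mathbb{C}^\infty_{0,\sigma}(\Omega)$ is not dense in $\mathrm{D}(\widetilde{\mathcal{A}}_p)$ for the graph norm. As you note, that norm controls the $\mathbb{W}^{2,2}$ norm. But $\mathbb{W}^{2,2}$-limits of compactly supported smooth fields have $\nabla\boldsymbol{u}=0$ on $\partial\Omega$ in the trace sense, and elements of $\mathrm{D}(\widetilde{\mathcal{A}}_p)$ need not. For example, on $\mathbb{R}^2_+$ take $\boldsymbol{u}=(\partial_2\psi,-\partial_1\psi)$ with $\psi=x_2^2\phi$, $\phi\in\mathrm{C}_0^\infty(\mathbb{R}^2)$, $\phi(\cdot,0)\not\equiv0$. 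Then $\boldsymbol{u}\in\mathrm{D}(\widetilde{\mathcal{A}}_p)$, but $\partial_2u_1=2\phi$ on $\{x_2=0\}$.

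What you actually need is density in the much weaker $\mathbb{V}\cap\mathbb{L}^p$ norm. That is a genuine simultaneous-approximation problem on unbounded domains. Truncation at infinity forces a divergence correction on $\Omega\cap(B_{2n}\setminus B_n)$, which may be disconnected or badly shaped. There, neither uniform Bogovski\u{\i} bounds nor the componentwise zero-mean condition are available in general; this is the obstruction noted after Theorem~\ref{main-1}.

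\textbf{Partial repair.} For $d=2$ (all $r$) and for $d=3$ with $3\le r\le5$, apply the Gagliardo--Nirenberg--Sobolev inequality to zero extensions of $\mathbb{H}^1_0$ fields. This gives $\mathbb{V}\hookrightarrow\mathbb{L}^{r+1}(\Omega)$ on any domain. Hence $\mathbb{V}\subset\mathbb{L}^{r+1}_\sigma$, $\mathcal{X}=\mathbb{V}$ with equivalent norms, and density of $\mathbb{C}^\infty_{0,\sigma}(\Omega)$ in $\mathcal{X}$ is just the definition of $\mathbb{V}$. In these cases no resolvent argument is needed and your proof goes through.

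For $d=3$ and $r>5$, however, $\mathbb{V}\not\hookrightarrow\mathbb{L}^{r+1}$. Neither your argument nor an appeal to Theorem~\ref{main-2} alone establishes density of $\mathbb{C}^\infty_{0,\sigma}(\Omega)$ in $\mathbb{V}\cap\mathbb{L}^{r+1}_\sigma$. This step is used both for the energy equality and for the difference of two solutions in the uniqueness part, and it remains open in your write-up.

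\textbf{A smaller point.} In the existence sketch, weak lower semicontinuity gives the energy inequality only from $t_0=0$. The strong version for a.e.\ $t_0$ needs strong convergence in $\mathrm{L}^2(0,T;\mathbb{L}^2(\Omega))$, which local compactness on $\Omega\cap B_R$ does not by itself provide on unbounded domains. The paper likewise only defers existence to the literature.
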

	\begin{proof}
		By taking $\mathcal{E}=\mathbb{L}^{r+1}_{\sigma}$
		and $\mathcal{V}=\mathbb{V}$ in Theorem \ref{GLML}, and using the density result established
		in Theorem~\ref{main-2}, we immediately obtain the
		 energy equality \eqref{energy-equality} for the two- and three-dimensional critical
		and supercritical CBF equations \eqref{abstract-CBF}.
		\end{proof}

		\appendix
		
		\section{Generalized Lions-Magenes chain rule}
		\label{Ap-chain-rule}
		We present a generalized version of the Lions-Magenes chain rule, extending \cite[Theorem 3.4]{Barbu1993}. This generalized formulation is particularly useful in optimal control theory, where it provides a rigorous framework for establishing the duality between linearized and adjoint variables and, consequently, plays a crucial role in deriving first-order necessary optimality conditions.
		
		\begin{theorem}[Generalized Lions-Magenes chain rule]\label{chain-rule}
		Let $\mathcal{H}$ be a Hilbert space, and let $\mathcal{V}$ and $\mathcal{E}$ be Banach spaces satisfying the Gelfand triple \eqref{eqn-gelfand}. Let $1<p<\infty$ and let $p'=\frac{p}{p-1}$. Suppose that
		\begin{align*}
			\boldsymbol{y},\boldsymbol{z}\in\mathrm{L}^2(0,T;\mathcal V)\cap\mathrm{L}^p(0,T;\mathcal E)
		\end{align*}
		and that, in $\mathcal D'(0,T;\mathcal{V}'+\mathcal{E}')$,
		\begin{align*}
			\boldsymbol{y}'=\boldsymbol{y}_1+\boldsymbol{y}_2,
			\ 
			\boldsymbol{z}'=\boldsymbol{z}_1+\boldsymbol{z}_2,
		\end{align*}
		where
		\begin{align*}
			\boldsymbol{y}_1,\boldsymbol{z}_1\in\mathrm{L}^2(0,T;\mathcal V'),
			\ 
			\boldsymbol{y}_2,\boldsymbol{z}_2\in\mathrm{L}^{p'}(0,T;\mathcal E').
		\end{align*}
		
		Then $\boldsymbol{y}$ and $\boldsymbol{z}$ admit representatives, still denoted by $\boldsymbol{y}$ and
		$\boldsymbol{z}$, such that
		$
		\boldsymbol{y},\boldsymbol{z}\in\mathrm{C}([0,T];\mathcal H).
		$
		Moreover, the map
		\begin{align*}
			t\mapsto (\boldsymbol{y}(t),\boldsymbol{z}(t))_{\mathcal H}
		\end{align*}
		is absolutely continuous on $[0,T]$, and
		\begin{align}
			\frac{\mathrm{d}}{\mathrm{d}t}(\boldsymbol{y}(t),\boldsymbol{z}(t))_{\mathcal H}
			=
			\langle \boldsymbol{y}_1(t),\boldsymbol{z}(t)\rangle_{\mathcal V',\mathcal V}
			+\langle \boldsymbol{y}_2(t),\boldsymbol{z}(t)\rangle_{\mathcal E',\mathcal E}
			+
			\langle \boldsymbol{z}_1(t),\boldsymbol{y}(t)\rangle
		\end{align}
			for a.e. $t\in[0,T]$.
		\end{theorem}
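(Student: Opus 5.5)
Note first that the displayed identity is evidently missing a term; the symmetric version should read
\begin{align*}
\frac{\mathrm{d}}{\mathrm{d}t}(\boldsymbol{y}(t),\boldsymbol{z}(t))_{\mathcal H}
=\langle \boldsymbol{y}_1,\boldsymbol{z}\rangle_{\mathcal V',\mathcal V}
+\langle \boldsymbol{y}_2,\boldsymbol{z}\rangle_{\mathcal E',\mathcal E}
+\langle \boldsymbol{z}_1,\boldsymbol{y}\rangle_{\mathcal V',\mathcal V}
+\langle \boldsymbol{z}_2,\boldsymbol{y}\rangle_{\mathcal E',\mathcal E},
\end{align*}
and this is what I will prove.

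The plan is to reduce everything to Theorem~\ref{GLML} by polarization. Apply Theorem~\ref{GLML} separately to $\boldsymbol{y}$, to $\boldsymbol{z}$, and to $\boldsymbol{y}+\boldsymbol{z}$. The hypotheses are linear, so $\boldsymbol{y}+\boldsymbol{z}\in \mathrm{L}^2(0,T;\mathcal V)\cap\mathrm{L}^p(0,T;\mathcal E)$ with $(\boldsymbol{y}+\boldsymbol{z})'=(\boldsymbol{y}_1+\boldsymbol{z}_1)+(\boldsymbol{y}_2+\boldsymbol{z}_2)$ in $\mathcal D'(0,T;\mathcal V'+\mathcal E')$, with the required integrability. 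Theorem~\ref{GLML} then yields continuous $\mathcal H$-valued representatives of $\boldsymbol{y}$, $\boldsymbol{z}$ and $\boldsymbol{y}+\boldsymbol{z}$. By the uniqueness part of that theorem, the representative of $\boldsymbol{y}+\boldsymbol{z}$ coincides everywhere with the sum of the representatives of $\boldsymbol{y}$ and $\boldsymbol{z}$: both are continuous and they agree almost everywhere.

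Next, use the polarization identity
$$
(\boldsymbol{y}(t),\boldsymbol{z}(t))_{\mathcal H}=\tfrac12\big(\|\boldsymbol{y}(t)+\boldsymbol{z}(t)\|_{\mathcal H}^2-\|\boldsymbol{y}(t)\|_{\mathcal H}^2-\|\boldsymbol{z}(t)\|_{\mathcal H}^2\big).
$$
Each of the three squared norms is absolutely continuous on $[0,T]$ by Theorem~\ref{GLML}, so the inner product is absolutely continuous as well. Differentiating almost everywhere and inserting the three derivative formulas, expand
$$
\langle \boldsymbol{y}_1+\boldsymbol{z}_1,\boldsymbol{y}+\boldsymbol{z}\rangle_{\mathcal V',\mathcal V}+\langle \boldsymbol{y}_2+\boldsymbol{z}_2,\boldsymbol{y}+\boldsymbol{z}\rangle_{\mathcal E',\mathcal E}
$$
bilinearly. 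After subtracting the diagonal terms, the cross terms left over are exactly
$$
\langle \boldsymbol{y}_1,\boldsymbol{z}\rangle_{\mathcal V',\mathcal V}+\langle \boldsymbol{y}_2,\boldsymbol{z}\rangle_{\mathcal E',\mathcal E}+\langle \boldsymbol{z}_1,\boldsymbol{y}\rangle_{\mathcal V',\mathcal V}+\langle \boldsymbol{z}_2,\boldsymbol{y}\rangle_{\mathcal E',\mathcal E},
$$
and the factor $2$ from Theorem~\ref{GLML} cancels the $\tfrac12$.

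Alternatively, the proof of Theorem~\ref{GLML} can be repeated directly. Reflect and mollify $\boldsymbol{y}$ and $\boldsymbol{z}$ in time. For the smooth approximations, differentiate $(\boldsymbol{y}^\varepsilon,\boldsymbol{z}^\varepsilon)_{\mathcal H}$ using \eqref{eqn-2}. Then pass to the limit using the uniform convergence in $\mathrm{C}([0,T];\mathcal H)$ from Step~4 and the Bochner-space convergences \eqref{eqn-3}, with H\"older estimates as in \eqref{eqn-12}.

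The only delicate point is the consistency of representatives, which the uniqueness statement settles, together with bookkeeping of the pairings. Everything else is algebra, so I expect no real obstacle.
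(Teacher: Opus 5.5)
Your proof is correct, and you are right about the statement: the displayed identity drops $\langle \boldsymbol{z}_2,\boldsymbol{y}\rangle_{\mathcal E',\mathcal E}$ (and the subscript on the $\boldsymbol{z}_1$ pairing). The paper's own proof ends with exactly your four-term formula.

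Your route, however, differs from the paper's. The paper takes the $\mathcal H$-continuous representatives from Theorem~\ref{GLML} and then proves the product rule directly by difference quotients, in the style of Barbu. With $\psi=(\boldsymbol{y},\boldsymbol{z})_{\mathcal H}$, it writes $\varepsilon^{-1}(\psi(t+\varepsilon)-\psi(t))=(D_\varepsilon\boldsymbol{y}(t),\boldsymbol{z}(t+\varepsilon))_{\mathcal H}+(\boldsymbol{y}(t),D_\varepsilon\boldsymbol{z}(t))_{\mathcal H}$. It then represents $D_\varepsilon\boldsymbol{y}(t)$ in $\mathcal X'$ as the time averages of $\boldsymbol{y}_1$ and $\boldsymbol{y}_2$ over $[t,t+\varepsilon]$, and splits the pairings into $\mathcal V'$--$\mathcal V$ and $\mathcal E'$--$\mathcal E$ parts. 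Continuity of translations in the Bochner spaces shows that the difference quotients converge in $\mathrm{L}^1$ to the four-term expression, so $\psi\in\W^{1,1}(0,T)$.

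Your polarization argument instead applies Theorem~\ref{GLML} to $\boldsymbol{y}$, $\boldsymbol{z}$ and $\boldsymbol{y}+\boldsymbol{z}$, and needs no new limit passage. You handle correctly the two points that require care: the continuous representative of $\boldsymbol{y}+\boldsymbol{z}$ is the sum of those of $\boldsymbol{y}$ and $\boldsymbol{z}$, and $(\boldsymbol{y}_1+\boldsymbol{z}_1)+(\boldsymbol{y}_2+\boldsymbol{z}_2)$ is an admissible splitting of $(\boldsymbol{y}+\boldsymbol{z})'$. The bilinear bookkeeping also checks out, so yours is the shorter proof.

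What it gives up is flexibility. It uses the real polarization identity, which matches the real-scalar convention already built into the factor $2$ in Theorem~\ref{GLML}; over $\mathbb{C}$ you would add the analogous identity for $\boldsymbol{y}+i\boldsymbol{z}$ to recover the imaginary part. It also needs $\boldsymbol{y}+\boldsymbol{z}$ itself to satisfy the hypotheses of Theorem~\ref{GLML}. The difference-quotient argument only needs the cross pairings $\langle\boldsymbol{y}_i,\boldsymbol{z}\rangle$ and $\langle\boldsymbol{z}_i,\boldsymbol{y}\rangle$ to be controlled by H\"older's inequality, so it adapts more readily to pairs $\boldsymbol{y}$, $\boldsymbol{z}$ of different regularity.

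Your fallback of mollifying both functions simultaneously would also work. You would need to extract a common subsequence along which both regularizations converge in $\mathrm{C}([0,T];\mathcal H)$.
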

		
		\begin{proof}
			Let
			$
			\boldsymbol{y},\boldsymbol{z}\in\mathrm{L}^2(0,T;\mathcal V)\cap \mathrm{L}^p(0,T;\mathcal E)
			$
			and set
			$\psi(t):=(\boldsymbol{y}(t),\boldsymbol{z}(t))_{\mathcal H}.$
			By the generalized Lions-Magenes lemma, we may choose representatives
			$\boldsymbol{y},\boldsymbol{z}\in\mathrm{C}([0,T];\mathcal H)$
			which are also absolutely continuous as $\mathcal X'$-valued functions.
			For $\varepsilon>0$ and $0<t<T-\varepsilon$, define
			\begin{align*}
			D_\varepsilon \boldsymbol{y}(t)
			:=
			\frac{\boldsymbol{y}(t+\varepsilon)-\boldsymbol{y}(t)}{\varepsilon}
			\ \text{ and }
			D_\varepsilon \boldsymbol{z}(t)
			:=
			\frac{\boldsymbol{z}(t+\varepsilon)-\boldsymbol{z}(t)}{\varepsilon}.
			\end{align*}
			Since
			$
			\boldsymbol{y}'=\boldsymbol{y}_1+\boldsymbol{y}_2
			\ \text{ in }\ \mathcal D'(0,T;\mathcal X'),$
			we have
			\begin{align*}
			D_\varepsilon \boldsymbol{y}(t)=
			\frac1{\varepsilon}\int_t^{t+\varepsilon}\boldsymbol{y}_1(s)\d s
			+
			\frac1{\varepsilon}\int_t^{t+\varepsilon}\boldsymbol{y}_2(s)\d s \ \text{	in }\ \mathcal X',
			\end{align*}
			for a.e. $t\in[0,T]$. Similarly, we find 
			\begin{align*}
			D_\varepsilon \boldsymbol{z}(t)
			=
			\frac1{\varepsilon}\int_t^{t+\varepsilon}\boldsymbol{z}_1(s)\d s
			+
			\frac1{\varepsilon}\int_t^{t+\varepsilon}\boldsymbol{z}_2(s)\d s \ \text{	in }\ \mathcal X',
			\end{align*}
			for a.e. $t\in[0,T]$. By the standard translation property of Bochner integrals (\cite[Theoem 3.2]{Barbu1993}), we obtain 
			\begin{align*}
			\lim_{\varepsilon\downarrow0}
			\int_0^{T-\varepsilon}
			\left\|
			\frac1{\varepsilon}
			\int_t^{t+\varepsilon}\boldsymbol{y}_1(s)\d s-\boldsymbol{y}_1(t)
			\right\|_{\mathcal V'}^2\d t
			=0,
			\end{align*}
			and
			\begin{align*}
			\lim_{\varepsilon\downarrow0}
			\int_0^{T-\varepsilon}
			\left\|
			\frac1{\varepsilon}
			\int_t^{t+\varepsilon}\boldsymbol{y}_2(s)\d s-\boldsymbol{y}_2(t)
			\right\|_{\mathcal E'}^{p'}\d t
			=0.
			\end{align*}
			The analogous relations hold for $\boldsymbol{z}_1$ and $\boldsymbol{z}_2$.
			On the other hand, since
			\begin{align*}
			\boldsymbol{y}\in\mathrm{L}^2(0,T;\mathcal V)
			\cap\mathrm{L}^p(0,T;\mathcal E)
			\ \text{ and }
			\boldsymbol{z}\in\mathrm{L}^2(0,T;\mathcal V)
			\cap\mathrm{L}^p(0,T;\mathcal E),
			\end{align*}
			we also have
			\begin{align*}
			\lim_{\varepsilon\downarrow0}
			\int_0^{T-\varepsilon}
			\|\boldsymbol{y}(t+\varepsilon)-\boldsymbol{y}(t)\|_{\mathcal V}^2\,\d t=0
			\ \text{ and }\ 
			\lim_{\varepsilon\downarrow0}
			\int_0^{T-\varepsilon}
			\|\boldsymbol{y}(t+\varepsilon)-\boldsymbol{y}(t)\|_{\mathcal E}^p\,\d t=0.
			\end{align*}
			Similar results hold true for $	\boldsymbol{z}$ also. 
			Now,
			\begin{align*}
				\frac{\psi(t+\varepsilon)-\psi(t)}{\varepsilon}
				&=\left(D_\varepsilon \boldsymbol{y}(t),\boldsymbol{z}(t+\varepsilon)\right)+\left(\boldsymbol{y}(t),D_\varepsilon \boldsymbol{z}(t)\right)\nonumber\\&=
				\left\langle D_\varepsilon \boldsymbol{y}(t),\boldsymbol{z}(t+\varepsilon)\right\rangle_{\mathcal X',\mathcal X}
				+
				\left\langle \boldsymbol{y}(t),D_\varepsilon \boldsymbol{z}(t)\right\rangle_{\mathcal X',\mathcal X},
			\end{align*}
		for a.e. $t\in[0,T]$. Using the decomposition of $\mathcal X'$ into
			$\mathcal V'+\mathcal E'$, this becomes
			\begin{equation*}
			\begin{aligned}
				\frac{\psi(t+\varepsilon)-\psi(t)}{\varepsilon}
				&=
				\left\langle
				\frac1{\varepsilon}\int_t^{t+\varepsilon}\boldsymbol{y}_1(s)\d s,
				\boldsymbol{z}(t+\varepsilon)
				\right\rangle_{\mathcal V',\mathcal V}
				+
				\left\langle
				\frac1{\varepsilon}\int_t^{t+\varepsilon}\boldsymbol{y}_2(s)\d s,
				\boldsymbol{z}(t+\varepsilon)
				\right\rangle_{\mathcal E',\mathcal E}
				\\
				&\quad+
				\left\langle
				\boldsymbol{y}(t),
				\frac1{\varepsilon}\int_t^{t+\varepsilon}\boldsymbol{z}_1(s)\d s
				\right\rangle_{\mathcal V',\mathcal V}
				+
				\left\langle
				\boldsymbol{y}(t),
				\frac1{\varepsilon}\int_t^{t+\varepsilon}\boldsymbol{z}_2(s)\d s
				\right\rangle_{\mathcal E',\mathcal E}.
			\end{aligned}
			\end{equation*}
			Therefore, by the Cauchy-Schwarz inequality for the
			$\mathcal V$-$\mathcal V'$ terms and Hölder's inequality for the
			$\mathcal E$-$\mathcal E'$ terms,
			\begin{equation*}
			\begin{aligned}
				\lim_{\varepsilon\downarrow0}
				\int_0^{T-\varepsilon}
				\left|
				\frac{\psi(t+\varepsilon)-\psi(t)}{\varepsilon}
				-
				\Phi(t)
				\right|\d t
				=0,
			\end{aligned}
			\end{equation*}
			where
			\begin{equation*}
			\begin{aligned}
				\Phi(t)
				=&
				\langle \boldsymbol{y}_1(t),\boldsymbol{z}(t)\rangle_{\mathcal V',\mathcal V}
				+
				\langle \boldsymbol{y}_2(t),\boldsymbol{z}(t)\rangle_{\mathcal E',\mathcal E}
				+
				\langle \boldsymbol{z}_1(t),\boldsymbol{y}(t)\rangle_{\mathcal V',\mathcal V}
				+
				\langle \boldsymbol{z}_2(t),\boldsymbol{y}(t)\rangle_{\mathcal E',\mathcal E},
			\end{aligned}
			\end{equation*}
      for a.e. $t\in[0,T]$.
			Hence
			$\psi\in\W^{1,1}(0,T;\mathbb R)$
			and
			\begin{align*}
			\frac{\d}{\d t}(\boldsymbol{y}(t),\boldsymbol{z}(t))_{\mathcal H}
			=
			\langle \boldsymbol{y}_1(t),\boldsymbol{z}(t)\rangle_{\mathcal V',\mathcal V}
			+
			\langle \boldsymbol{y}_2(t),\boldsymbol{z}(t)\rangle_{\mathcal E',\mathcal E}
			+
			\langle \boldsymbol{z}_1(t),\boldsymbol{y}(t)\rangle_{\mathcal V',\mathcal V}
			+
			\langle \boldsymbol{z}_2(t),\boldsymbol{y}(t)\rangle_{\mathcal E',\mathcal E},
			\end{align*}
			for a.e. $t\in[0,T]$.
		\end{proof}

		\medskip\noindent
		\textbf{Acknowledgments:} 
The first author would like to thank the Council of Scientific and Industrial Research (CSIR), India, for providing financial support through a Junior Research Fellowship (JRF). The second author would like to thank the Ministry of Education (MoE), Government of India, for financial assistance. M.\,T. Mohan was supported by the National Board of Higher Mathematics (NBHM), Department of Atomic Energy, Government of India (Project No.\ 02011/13/2025/NBHM(R.P)/R\&D~II/1137). The third author would like to thank Dr.\ Kush Kinra, Department of Mathematics, Friedrich-Alexander-Universit\"at Erlangen-N\"urnberg (FAU), Germany, for valuable discussions.

		\medskip\noindent	\textbf{Declarations:} 
		
		\noindent 	\textbf{Ethical Approval:}   Not applicable 
		
		\noindent  \textbf{Competing interests: } The authors declare no competing interests. 
		
		\noindent  \textbf{Conflict of interest: }On behalf of all authors, the corresponding author states that there is no conflict of interest.
		
		\noindent 	\textbf{Authors' contributions:} All authors have contributed equally. 
		
		\noindent 	\textbf{Availability of data and materials:} Not applicable.

\end{document}